\theoremstyle{plain}
\newtheorem{Thm}{Theorem}[section]
\newtheorem{Lem}[Thm]{Lemma}
\newtheorem{Cor}[Thm]{Corollary}
\newtheorem{Pro}[Thm]{Proposition}
\theoremstyle{definition}
\newtheorem{Def}[Thm]{Definition}
\newtheorem{Exm}[Thm]{Example}
\theoremstyle{remark}
\newtheorem{Rem}[Thm]{Remark}
\numberwithin{equation}{section}
\newcommand{\ITE}[3]{\ifthenelse{#1}{#2}{#3}}\newcommand{\ITEE}[4][]{\ITE{\equal{#2}{#3}}{#4}{#1}}
\newenvironment{cor}[2][]{\ITEE[{\begin{Cor}[#1]}]{#1}{}{\begin{Cor}}\label{cor:#2}}{\end{Cor}}
\newenvironment{dfn}[2][]{\ITEE[{\begin{Def}[#1]}]{#1}{}{\begin{Def}}\label{def:#2}}{\end{Def}}
\newenvironment{exm}[2][]{\ITEE[{\begin{Exm}[#1]}]{#1}{}{\begin{Exm}}\label{exm:#2}}{\end{Exm}}
\newenvironment{lem}[2][]{\ITEE[{\begin{Lem}[#1]}]{#1}{}{\begin{Lem}}\label{lem:#2}}{\end{Lem}}
\newenvironment{pro}[2][]{\ITEE[{\begin{Pro}[#1]}]{#1}{}{\begin{Pro}}\label{pro:#2}}{\end{Pro}}
\newenvironment{rem}[2][]{\ITEE[{\begin{Rem}[#1]}]{#1}{}{\begin{Rem}}\label{rem:#2}}{\end{Rem}}
\newenvironment{thm}[2][]{\ITEE[{\begin{Thm}[#1]}]{#1}{}{\begin{Thm}}\label{thm:#2}}{\end{Thm}}
\newcommand{\COR}[2][!]{\ITEE{#1}{!}{Corollary~}\ITEE{#1}{s}{Corollaries~}\textup{\ref{cor:#2}}}
\newcommand{\DEF}[2][!]{\ITEE{#1}{!}{Definition~}\ITEE{#1}{s}{Definitions~}\textup{\ref{def:#2}}}
\newcommand{\LEM}[2][!]{\ITEE{#1}{!}{Lemma~}\ITEE{#1}{s}{Lemmas~}\textup{\ref{lem:#2}}}
\newcommand{\PRO}[2][!]{\ITEE{#1}{!}{Proposition~}\ITEE{#1}{s}{Propositions~}\textup{\ref{pro:#2}}}
\newcommand{\REM}[2][!]{\ITEE{#1}{!}{Remark~}\ITEE{#1}{s}{Remarks~}\textup{\ref{rem:#2}}}
\newcommand{\THM}[2][!]{\ITEE{#1}{!}{Theorem~}\ITEE{#1}{s}{Theorems~}\textup{\ref{thm:#2}}}
\newcommand{\KKK}{\mathbb{K}}
\newcommand{\NNN}{\mathbb{N}}
\newcommand{\RRR}{\mathbb{R}}
\newcommand{\MmM}{\EuScript{M}}
\newcommand{\Aa}{\mathfrak{A}}
\newcommand{\Bb}{\mathfrak{B}}
\newcommand{\Dd}{\mathfrak{D}}
\newcommand{\Mm}{\mathfrak{M}}
\newcommand{\Nn}{\mathfrak{N}}
\newcommand{\aaA}{\mathscr{A}}
\newcommand{\bbB}{\mathscr{B}}
\newcommand{\ffF}{\mathscr{F}}
\newcommand{\ggG}{\mathscr{G}}
\newcommand{\hhH}{\mathscr{H}}
\newcommand{\kkK}{\mathscr{K}}
\newcommand{\llL}{\mathscr{L}}
\newcommand{\mmM}{\mathscr{M}}
\newcommand{\dd}{\colon}
\newcommand{\df}{\stackrel{\textup{def}}{=}}
\newcommand{\dint}[1]{\,\textup{d} #1}
\newcommand{\epsi}{\varepsilon}
\newcommand{\geqsl}{\geqslant}
\newcommand{\leqsl}{\leqslant}
\newcommand{\scalar}[2]{\left\langle#1,#2\right\rangle}
\newcommand{\varempty}{\varnothing}
\newcommand{\RE}{\operatorname{Re}}
\newcommand{\tfcae}{the following conditions are equivalent:}
\newcommand{\iaoi}{if and only if}
\begin{document}

\title{Bounded convergence theorems}
\author[P. Niemiec]{Piotr Niemiec}
\address{Instytut Matematyki\\{}Wydzia\l{} Matematyki i~Informatyki\\{}
 Uniwersytet Jagiello\'{n}ski\\{}ul. \L{}ojasiewicza 6\\{}30-348 Krak\'{o}w\\{}Poland}
\email{piotr.niemiec@uj.edu.pl}
\begin{abstract}
There are presented certain results on extending continuous linear operators defined on spaces
of $E$-valued continuous functions (defined on a compact Hausdorff space $X$) to linear operators
defined on spaces of $E$-valued measurable functions in a way such that uniformly bounded sequences
of functions that converge pointwise in the weak (or norm) topology of $E$ are sent to sequences
that converge in the weak, norm or weak* topology of the target space. As an application, a new
description of uniform closures of convex subsets of $C(X,E)$ is given. Also new and strong results
on integral representations of continuous linear operators defined on $C(X,E)$ are presented. A new
classes of vector measures are introduced and various bounded convergence theorems for them are
proved.
\end{abstract}
\subjclass[2010]{Primary 46G10; Secondary 46E40.}
\keywords{Vector measure; dual Banach space; Riesz characterisation theorem; weakly sequentially
 complete Banach space; dominated convergence theorem; bounded convergence theorem; function space.}
\maketitle


\section{Introduction}

Lebesgue's dominated convergence theorem (for nonnegative measures) is a fundamental as well as
powerful tool which finds applications in many mathematical branches. (In this paper all measures
are meant to be countably additive.) Although nonnegative measures were naturally generalised
to vector-valued set functions (usually called \textit{vector measures}) many years ago (see, for
example, \cite{din}, \cite{d-u} or Chapter~IV in \cite{d-s}) and the above result waited many
generalisations, one of the disadvantages of vector integrals (of vector-valued functions with
respect to vector integrals) is the difficulty in verifying that a specific function is integrable.
For instance, if the total variation of a vector measure is infinite, not every bounded measurable
function with separable image is integrable, in the opposite to the scalar case (since every
scalar-valued measure automatically has finite variation). This causes that the concepts
of integrating vector-valued functions with respect to vector measures (proposed by Bartle
\cite{bar}, Dinculeanu \cite{din}, Goodrich \cite{go1,go2}, Lewis \cite{lew}, Tucker and Wayment
\cite{t-w}, Smith and Tucker \cite{s-t} and others) is not as popular as the classical theory
of measure and integration (and the theory of integrating vector-valued functions with respect
to nonnegative measures or scalar-valued functions with respect to vector measures; see, for
example, \cite{d-u}). In this paper we introduce a new class of vector measures with respect
to which all bounded measurable functions with separable images are integrable and for which
(strong) bounded convergence theorem holds (which may be seen as a counterpart of the Lebesgue
dominated convergence theorem). Our approach is based on results on extending continuous linear
operators (such as stated in the abstract). To formulate the main of them, let us first introduce
necessary definitions. Everywhere below $X$ and $\Omega$ are, respectively, a compact and a locally
compact Hausdorff space and $E$ and $F$ are Banach spaces.

\begin{dfn}{M(A)}
For a nonempty set $Z$, let $\ell_{\infty}(Z,E)$ stand for the Banach space of all $E$-valued
bounded functions on $Z$ (equipped with the sup-norm induced by the norm of $E$). For every set
$A \subset \ell_{\infty}(Z,E)$, the space \textit{$\mmM(A)$} is defined as the smallest set among
all $B \subset \ell_{\infty}(Z,E)$ such that:
\begin{enumerate}[(M1)]\addtocounter{enumi}{-1}
\item $A \subset B$;
\item whenever $f_n \in B$ are uniformly bounded and converge pointwise to $f \in
 \ell_{\infty}(Z,E)$ in the weak topology of $E$, then $f \in B$.
\end{enumerate}
It is an easy exercise that $\mmM(V)$ is a linear subspace of $\ell_{\infty}(Z,E)$ provided $V$ is
so.\par
By $C(X,E)$ ($C_0(\Omega,E)$) we denote the subspace of $\ell_{\infty}(X,E)$ (resp.
of $\ell_{\infty}(\Omega,E)$) consisting of all continuous functions from $X$ into $E$ (resp. from
$\Omega$ into $E$ that vanish at infinity). For simplicity, we put $\ell_{\infty}^{\KKK} \df
\ell_{\infty}(\NNN,\KKK)$ (where $\NNN \df \{1,2,\ldots\}$).
\end{dfn}

Our main result on extending continuous linear operators reads as follows.

\begin{thm}{1}
Let $V$ be a linear subspace of $C(X,E)$. Every continuous linear operator $T\dd V \to F^*$ is
uniquely extendable to a linear operator $\bar{T}\dd \mmM(V) \to F^*$ such that:
\begin{itemize}
\item[(BC*)] whenever $f_n \in \mmM(V)$ are uniformly bounded and converge pointwise to $f \in
 \mmM(V)$ in the weak topology of $E$, then $\bar{T} f_n$ converge to $\bar{T} f$ in the weak*
 topology of $F^*$.
\end{itemize}
Moreover, $\bar{T}$ is continuous and $\|\bar{T}\| = \|T\|$.
\end{thm}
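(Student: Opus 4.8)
The plan is to reduce the whole statement to a scalar-valued version and then to settle that scalar version by representing functionals on $C(X,E)$ as genuine finite scalar Borel measures on an auxiliary compact space, where ordinary Lebesgue dominated convergence applies.

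First, uniqueness. I would take any two linear extensions $\bar T_1,\bar T_2\dd\mmM(V)\to F^*$ of $T$ satisfying (BC*), fix $y\in F$, and look at $G_y\df\{f\in\mmM(V):\scalar{\bar T_1 f}{y}=\scalar{\bar T_2 f}{y}\}$. It contains $V$, and (BC*) together with the uniqueness of limits in $\KKK$ shows that $G_y$ is stable under uniformly bounded, weakly pointwise convergent sequences; since $\mmM(V)$ is by definition the smallest such set, $G_y=\mmM(V)$. As $y$ is arbitrary, $\bar T_1=\bar T_2$.

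For existence I would first isolate the scalar case: for $\phi\in C(X,E)^*$ I want a linear $\bar\phi\dd\mmM(C(X,E))\to\KKK$ with $|\bar\phi(f)|\leqsl\|\phi\|\,\|f\|_\infty$ extending $\phi$ and sending uniformly bounded, weakly pointwise convergent sequences to convergent ones. The key device is that every $f\in\mmM(C(X,E))$ has separable range (transfinite induction along the construction of $\mmM$, using that a weak limit of points of a separable subspace stays in it); consequently $\mmM(C(X,E))=\bigcup_{E_0}\mmM(C(X,E_0))$, the union over separable closed subspaces $E_0\subseteq E$ (again induction, since each functional of $E_0^*$ extends to $E^*$, so weak pointwise convergence passes between $E$ and $E_0$). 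Fix such an $E_0$ and let $K_0$ be the unit ball of $E_0^*$ with the weak* topology; as $E_0$ is separable, $K_0$ is compact metrizable and $X\times K_0$ is compact Hausdorff. The map $f\mapsto\hat f$, $\hat f(x,e^*)\df\scalar{e^*}{f(x)}$, is an isometric embedding of $C(X,E_0)$ into $C(X\times K_0)$. Extending $\phi|_{C(X,E_0)}$ by Hahn--Banach and invoking the classical scalar Riesz theorem on $X\times K_0$, I obtain a finite regular Borel measure $\nu$ with $\|\nu\|\leqsl\|\phi\|$ and $\phi(f)=\int_{X\times K_0}\hat f\dint\nu$ for $f\in C(X,E_0)$. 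I then simply \emph{define} $\bar\phi(f)\df\int_{X\times K_0}\hat f\dint\nu$ for every bounded weakly measurable $f\dd X\to E_0$; here $\hat f$ is a Carath\'eodory function, hence (as $K_0$ is metrizable) jointly measurable, and $|\bar\phi(f)|\leqsl\|\nu\|\,\|f\|_\infty$. The scalar convergence property is now immediate: if $f_n\to f$ are uniformly bounded and converge weakly pointwise, then $\hat f_n\to\hat f$ pointwise on $X\times K_0$ under a common bound, so Lebesgue's theorem for the finite measure $\nu$ gives $\bar\phi(f_n)\to\bar\phi(f)$.

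The main obstacle is to make these per-$E_0$ definitions cohere into one well-defined linear $\bar\phi$ on $\mmM(C(X,E))$, because the sequences witnessing membership in $\mmM$ roam through all of $E$ while their limits must be pinned inside a separable subspace. I would resolve this with the scalar uniqueness proved exactly as in the first paragraph (the smallest-set argument): for $E_0\subseteq E_1$, both the $\nu_{E_0}$- and the $\nu_{E_1}$-integral restrict on $\mmM(C(X,E_0))$ to extensions of $\phi|_{C(X,E_0)}$ enjoying the scalar convergence property, hence coincide there; thus $\bar\phi(f)$ is unambiguous, and linearity and the norm bound follow by choosing, for any finitely many functions, a single separable $E_0$ containing their ranges. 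Finally I would assemble the operator: for $y\in F$ set $\phi_y\df\scalar{T\,\cdot}{y}$, extend it by Hahn--Banach to $C(X,E)^*$, apply the scalar construction, and restrict to $\mmM(V)$; scalar uniqueness makes the outcome independent of the Hahn--Banach extension, giving a well-defined $\bar\phi_y\dd\mmM(V)\to\KKK$ that is linear in $y$ (again by uniqueness) and satisfies $|\bar\phi_y(f)|\leqsl\|T\|\,\|y\|\,\|f\|_\infty$. Declaring $\bar T f$ to be the functional $y\mapsto\bar\phi_y(f)$ then yields $\bar T f\in F^*$ with $\|\bar T f\|\leqsl\|T\|\,\|f\|_\infty$; it extends $T$, property (BC*) is exactly the scalar convergence tested against each $y$, and $\|\bar T\|=\|T\|$ since $\bar T$ extends $T$. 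Together with the uniqueness above, this completes the argument.
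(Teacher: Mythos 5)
Your proof is correct, and it shares its analytic engine with the paper's --- scalarise against each $y\in F$, represent the resulting functional by a finite regular Borel scalar measure on a compact product space via Hahn--Banach plus the classical Riesz theorem, conclude with Lebesgue's dominated convergence theorem, and use the minimality of $\mmM$ both for uniqueness and for all gluing --- but the existence part is organised genuinely differently. The paper (\LEM{2} and \THM{1'}) embeds the whole of $E$ into $C(Y,\KKK)$ for a single, generally non-metrisable, compact $Y$ and constructs $\bar T$ by transfinite recursion over the layers $W_\alpha$ of $\mmM(V)$, invoking at each stage the sequential completeness of the target in its weak* topology (condition (CC)) to see that the limit candidates actually lie in $F^*$. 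You instead localise to separable closed subspaces $E_0$ of $E$, so that the auxiliary factor $K_0$ is metrisable, and write $\bar\phi(f)=\int\hat f\dint{\nu}$ as a closed formula valid on all of $\mmM(C(X,E_0))$ at once; the price is the separable-range decomposition of $\mmM(C(X,E))$ and the coherence checks across different $E_0$ and different Hahn--Banach extensions, which you correctly discharge with the same minimality argument used for uniqueness. Your route buys an explicit integral formula for $\scalar{\bar T f}{y}$ and avoids the transfinite recursion in the construction itself, but it leans on the target being a dual space in its weak* topology (boundedness of $y\mapsto\int\hat f\dint{\nu_y}$ replaces (CC)); the paper's recursion is what yields the more general \THM{1'} for arbitrary initial sequentially complete targets, which in turn feeds \PRO{wsc}. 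Two points you pass over quickly are routine but worth recording explicitly: that $\mmM(C(X,E_0))$ consists of bounded weakly Borel-measurable functions (so that the Carath\'eodory argument applies to $\hat f$), and that $\Bb(X)\otimes\Bb(K_0)\subset\Bb(X\times K_0)$, so product-measurability of $\hat f$ already gives $\nu$-integrability.
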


In the above notation, ``BC'' is the abbreviation of \textit{bounded convergence} and ``*'' is
to emphasize that the \textit{final} convergence is in the weak* topology. In the sequel, we shall
continue this concept.\par
It is a matter of taste to think of integrals as derived from measures (a typical approach
in measure theory) or conversely (for example, starting from Riesz' characterisation theorem or from
the Daniell theory of integrals; see \cite{dan} or Chapter~XIII in \cite{mau}). In this paper
we follow the latter approach, generalising the classical Riesz characterisation theorem in a new
way, which led us to the introduction of a new class of vector measures:

\begin{dfn}{i-m}
For $T_n \in \llL(E,F)$ (where $\llL(E,F)$ stands for the Banach space of all continuous linear
operators from $E$ into $F$), the series $\sum_{n=1}^{\infty} T_n$ is said to be
\textit{independently} convergent if the series
\begin{equation}\label{eqn:i-m}
\sum_{n=1}^{\infty} T_n x_n
\end{equation}
is convergent in the norm topology of $F$ for every bounded sequence of elements $x_n$ of $E$.
(If this happens, the series \eqref{eqn:i-m} is unconditionally convergent.)\par
A set function $\mu\dd \Mm \to \llL(E,F)$ (where $\Mm$ is a $\sigma$-algebra of a set $Z$) is called
an \textit{i-measure} if $\mu(\bigcup_{n=1}^{\infty} A_n)x = \sum_{n=1}^{\infty} \mu(A_n)x$ (for
each $x \in E$) and the series $\sum_{n=1}^{\infty} \mu(A_n)$ is independently convergent for any
sequence of pairwise disjoint sets $A_n \in \Mm$. The \textit{total semivariation} $\|\mu\|_Z \in
[0,\infty]$ of $\mu$ is given by
\begin{multline}\label{eqn:semi}
\|\mu\|_Z \df \sup\Bigl\{\bigl\|\sum_{n=1}^N \mu(A_n) x_n\bigr\|\dd\ N < \infty,\ A_n \in \Mm
\textup{ are pairwise disjoint},\\x_n \in E,\ \|x_n\| \leqsl 1\Bigr\}
\end{multline}
(compare \S4 of Chapter~I in \cite{din}).
\end{dfn}

We shall prove in \LEM{op} that every independently convergent series of elements of $\llL(E,F)$ is
convergent in the norm topology of $\llL(E,F)$ (and thus every i-measure is a vector measure with
respect to the norm topology of $\llL(E,F)$). What is more, it turns out that each i-measure has
finite total semivariation (see \THM{fin}). This discovery enables us to define the vector integral
$\int_Z f \dint{\mu}$ of any $E$-valued bounded measurable function $f$ with separable image with
respect to a given $\llL(E,F)$-valued i-measure $\mu$ on a set $Z$. We also show that the operator
$\bar{T}$ given by $\bar{T} f \df \int_Z f \dint{\mu}$ satisfies condition (BC*) with the weak
topology of $F$ inserted in place of the weak* topology of $F^*$, or with the norm topologies on $E$
and $F$ (and $\mmM(V)$ replaced by the space of all functions $f$ with the properties specified
above). This is shown in \THM[s]{bwc} and \THM[]{bnc}. These remarks may justify a conclusion that
i-measures are the best counterparts (in the operator-valued case) of finite nonnegative
(or scalar-valued) measures.\par
Taking into account the Riesz characterisation theorem, continuous linear operators from Banach
spaces of the form $C(X,E)$ (into arbitrary Banach spaces) may be called abstract vector integrals.
There are a number of results which justify such a terminology (see, for example, \cite{go1,go2},
\cite{s-t} or Theorem~9 in \S5 of Chapter~III in \cite{din}). However, in most of them the final
vector measure is only finitely additive. In our characterisation (in a special case) the final
measure is an i-measure (and thus it is countably additive):

\begin{thm}{3}
Let $F$ be a weakly sequentially complete Banach space or a dual Banach space containing
no isomorphic copy of $\ell_{\infty}^{\RRR}$ and let $\Omega$ be a locally compact Hausdorff space.
For every continuous linear operator $T\dd C_0(\Omega,E) \to F$ there exists a unique regular Borel
i-measure $\mu\dd \Bb(\Omega) \to \llL(E,F)$ such that
\begin{equation}\label{eqn:vint}
T f = \int_{\Omega} f \dint{\mu} \qquad (f \in C_0(\Omega,E)).
\end{equation}
Conversely, if $\mu\dd \Bb(\Omega) \to \llL(E,F)$ is an arbitrary regular i-measure \textup{(}and
$F$ is an arbitrary Banach space\textup{)}, then \eqref{eqn:vint} correctly defines a continuous
linear operator $T\dd C_0(\Omega,E) \to F$ such that $\|T\| = \|\mu\|_{\Omega}$.
\end{thm}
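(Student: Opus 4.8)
I would prove the two halves separately, starting with the converse, which is the easier one and holds for \emph{arbitrary} $F$. Given a regular i-measure $\mu$, \THM{fin} guarantees $\|\mu\|_\Omega<\infty$, so the integral $\int_\Omega f\dint\mu$ is defined for every $f\in C_0(\Omega,E)$ (such an $f$ has totally bounded, hence separable, image). Linearity of $f\mapsto\int_\Omega f\dint\mu$ is clear, and approximating $f$ uniformly by simple functions and reading off \eqref{eqn:semi} gives $\|\int_\Omega f\dint\mu\|\leqsl\|\mu\|_\Omega\,\|f\|_\infty$, so $T$ is continuous with $\|T\|\leqsl\|\mu\|_\Omega$. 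For the reverse inequality I would fix pairwise disjoint Borel sets $A_1,\dots,A_N$ and vectors $x_j$ with $\|x_j\|\leqsl1$, shrink the $A_j$ to disjoint compacta sitting inside disjoint open sets by regularity, and glue Urysohn functions into $f=\sum_j\varphi_j x_j\in C_0(\Omega,E)$ with $\|f\|_\infty\leqsl1$ and $Tf$ arbitrarily close to $\sum_j\mu(A_j)x_j$; the supremum then yields $\|T\|\geqsl\|\mu\|_\Omega$.

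For the representation itself I would pass to the one-point compactification $X=\Omega\cup\{\infty\}$ and identify $C_0(\Omega,E)$ with the closed subspace $V=\{g\in C(X,E)\dd g(\infty)=0\}$. Composing $T$ with the canonical embedding $F\hookrightarrow F^{**}=(F^*)^*$, I regard $T$ as an operator $V\to(F^*)^*$ and apply \THM{1} (with $F^*$ in the role of $F$) to obtain the unique extension $\bar T\dd\mmM(V)\to F^{**}$ satisfying (BC*) for the weak* topology of $F^{**}$, with $\|\bar T\|=\|T\|$. A Urysohn-plus-monotone-class argument shows that $\mmM(V)$ contains every bounded $\Bb(\Omega)$-measurable function with separable image; in particular $\chi_A x\in\mmM(V)$ for $A\in\Bb(\Omega)$ and $x\in E$, where $\chi_A x$ is the function equal to $x$ on $A$ and $0$ off $A$.

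The decisive point --- and the main obstacle --- is to show that $\bar T$ actually takes values in $F$. Put $\ggG=\{g\in\mmM(V)\dd\bar Tg\in F\}$; then $V\subset\ggG$, and by minimality it suffices to check that $\ggG$ is stable under the operation (M1). So let $g_n\in\ggG$ be uniformly bounded and converge pointwise weakly to $g$. Then (BC*) gives $\bar Tg_n\to\bar Tg$ weakly* in $F^{**}$, which --- since each $\bar Tg_n$ lies in $F$ --- says exactly that $(\bar Tg_n)$ is weakly Cauchy in $F$. If $F$ is weakly sequentially complete this sequence converges weakly in $F$, and its limit must be $\bar Tg$, so $\bar Tg\in F$. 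If instead $F=G^*$ is a dual space with no copy of $\ell_{\infty}^{\RRR}$, I would first note that $(\bar Tg_n)$, being bounded and weak*-Cauchy in $F^{**}=G^{***}$, is weak*-convergent in $G^*$ to some $\ell\in F$; the hypothesis that $G^*$ contains no copy of $\ell_{\infty}^{\RRR}$ is then exactly what is needed to promote this weak* convergence to weak convergence in $F$, forcing $\bar Tg=\ell\in F$. This dichotomy, isolated entirely in the closure step for $\ggG$, is where the two hypotheses on $F$ do their (distinct) work; with it in hand, $\ggG=\mmM(V)$ and $\bar T$ maps into $F$.

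Granting $\bar T\dd\mmM(V)\to F$, I would set $\mu(A)x\df\bar T(\chi_A x)$; linearity in $x$ and the bound $\|\mu(A)\|\leqsl\|\bar T\|$ give $\mu(A)\in\llL(E,F)$. For pairwise disjoint $A_n$ and an arbitrary bounded sequence $(x_n)$ in $E$, the partial sums of $\sum_n\chi_{A_n}x_n$ are uniformly bounded and converge pointwise in norm, so (BC*) makes $\sum_n\mu(A_n)x_n$ converge weakly in $F$; the same holds for every subseries, so the series is weakly subseries convergent and the Orlicz--Pettis theorem upgrades this to norm convergence. This establishes both $\mu(\bigcup_n A_n)x=\sum_n\mu(A_n)x$ and independent convergence, i.e.\ that $\mu$ is an i-measure, its regularity being inherited from that of the scalar measures $A\mapsto\scalar{\mu(A)x}{y^*}$. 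Finally, approximating $f\in C_0(\Omega,E)$ uniformly by simple functions $s_k$ gives $\int_\Omega s_k\dint\mu=\bar Ts_k$ by the very definition of $\mu$, and letting $k\to\infty$ (both sides converging in norm, as $\|\bar T\|<\infty$ and $\|\mu\|_\Omega<\infty$) yields $\int_\Omega f\dint\mu=\bar Tf=Tf$. Uniqueness reduces to the scalar case: for $x\in E$ and $y^*\in F^*$ the regular Borel measure $A\mapsto\scalar{\mu(A)x}{y^*}$ is determined through the classical Riesz uniqueness applied to $f_0\mapsto\scalar{T(f_0x)}{y^*}$ on $C_0(\Omega)$, whence $\mu$ is determined by $T$.
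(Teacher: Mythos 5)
Your converse half and the wsc branch of the direct half are essentially sound and run parallel to the paper (the norm identity $\|T\|=\|\mu\|_{\Omega}$ is \LEM{norm}; the wsc case of the range problem is \THM{1'} applied to $(F,\textup{weak})$, cf.\ \PRO{wsc}). But the dual-space branch of your ``decisive point'' contains a genuine error. You claim that if $F=G^*$ contains no copy of $\ell_{\infty}^{\RRR}$, then a bounded weakly Cauchy sequence $(\bar{T}g_n)$ in $F$ must converge \emph{weakly} to its weak* limit $\ell$. That implication is false: it would say that every dual Banach space without a copy of $\ell_{\infty}^{\RRR}$ is weakly sequentially complete, whereas the bidual $J^{**}$ of the James space is a separable (hence $\ell_{\infty}$-free) dual space that is not wsc (it is non-reflexive and contains no copy of $\ell_1$, so Rosenthal's theorem applies). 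Agreement of $\bar{T}g$ and $\ell$ on the predual $G\subset F^*=G^{**}$ does not give agreement on all of $G^{**}$. The hypothesis ``no copy of $\ell_{\infty}^{\RRR}$'' has to be spent at the level of \emph{vector measures}, not of single sequences: the paper first represents $T$ by a weak* i-measure (\THM{w*}) and then invokes the Diestel--Faires theorem (\PRO{w*hide}) to upgrade the weak*-countably-additive set function $B\mapsto\sum_n\mu(A_n\cap B)x_n$ to a norm-countably-additive one. Your later Orlicz--Pettis step is the right instinct, but it presupposes exactly the membership $\bar{T}g\in F$ that the flawed step was supposed to deliver.

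Second, your claim that $\mmM(V)$ contains $\chi_A x$ for every $A\in\Bb(\Omega)$ is false for non-metrisable $\Omega$: by \LEM{1}, $\mmM(C(X,E))=M(X,E)$ is built on $\Mm(X)$, the $\sigma$-algebra generated by the closed $\ggG_{\delta}$ sets, and the paper explicitly notes that in general $\Mm(X)\neq\Bb(X)$; a monotone-class argument starting from Urysohn functions cannot reach beyond $\Mm(X)$. So your construction only yields an i-measure on $\Mm(\Omega)$-type sets, and producing the \emph{regular Borel} i-measure required by the statement --- together with the uniqueness of that regular extension --- is a separate regularisation step (the paper's \PRO{extend} and \LEM{one-point}), which in turn rests on the finite-semivariation and control-measure results (\THM{fin}, \COR{abs}). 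Neither gap affects your treatment of the converse implication, which matches \LEM{norm} in outline, though making $Tf$ ``arbitrarily close'' to $\sum_j\mu(A_j)x_j$ there also requires the control measure and \THM{bnc}, as in the paper.
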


We also give an integral representation of continuous linear operators from $C_0(\Omega,E)$ which
take values in arbitrary Banach spaces. This is done with the help of so-called \textit{weak*}
i-measures, introduced and discussed in Section~6.\par
As a consequence of \THM{3} and bounded convergence theorems for i-measures, we obtain a new result
on the description of the uniform closure of a convex subset of $C(X,E)$:

\begin{thm}{2}
In each of the three cases specified below, the norm closure of a convex subset $\kkK$
of $C_0(\Omega,E)$ coincides with the set of all functions $f \in C_0(\Omega,E)$ such that
$f\bigr|_L \in \mmM\bigl(\kkK\bigr|_L\bigr)$ \textup{(}where $\kkK\bigr|_L \df \{g\bigr|_L \in
C(L,E)\dd\ g \in \kkK\}$\textup{)} for any compact set $L \subset \Omega$:
\begin{itemize}
\item $\Omega$ is compact; or
\item $\kkK$ is bounded; or
\item $E$ is a $C^*$-algebra and $\kkK$ is a $*$-subalgebra of $C_0(\Omega,E)$.
\end{itemize}
\end{thm}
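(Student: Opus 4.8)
The plan is to establish the two inclusions separately, writing $N$ for the set on the right-hand side. The inclusion $\overline{\kkK}\subseteq N$ holds in all three cases and uses no hypothesis on $\kkK$ or $\Omega$: if $g_n\in\kkK$ converge uniformly to $f$, then for every compact $L\subseteq\Omega$ the restrictions $g_n|_L$ are uniformly bounded and converge to $f|_L$ uniformly, hence pointwise in norm and \emph{a fortiori} in the weak topology of $E$; since $g_n|_L\in\kkK|_L\subseteq\mmM(\kkK|_L)$, condition (M1) forces $f|_L\in\mmM(\kkK|_L)$, so $f\in N$.

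For the reverse inclusion $N\subseteq\overline{\kkK}$ I would argue by contradiction via separation. Suppose $f\in N$ but $f\notin\overline{\kkK}$. As $\kkK$ is convex, $\overline{\kkK}$ is closed and convex, so Hahn--Banach yields $\varphi\in C_0(\Omega,E)^*$ and $\alpha\in\RRR$ with $\RE\varphi(g)\leqsl\alpha<\RE\varphi(f)$ for all $g\in\kkK$. Applying \THM{3} with the (reflexive, hence weakly sequentially complete) scalar field in place of $F$, I represent $\varphi$ by a regular Borel i-measure $\mu\dd\Bb(\Omega)\to\llL(E,\KKK)=E^*$ with $\|\mu\|_{\Omega}=\|\varphi\|<\infty$. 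By regularity together with finiteness of the total semivariation, for any $\epsi>0$ there is a compact $L\subseteq\Omega$ outside which the semivariation of $\mu$ is below $\epsi$. Restricting $\mu$ to the Borel subsets of $L$ defines, by the converse part of \THM{3}, a continuous functional $\psi\dd C(L,E)\to\KKK$, $\psi(h)=\int_L h\dint\mu$, satisfying $\psi(g|_L)=\varphi(g)-\int_{\Omega\setminus L}g\dint\mu$, where the tail is bounded by $\epsi\|g\|$.

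The key step transfers this inequality from $\kkK|_L$ to the larger set $\mmM(\kkK|_L)$ that contains $f|_L$. Extending $\psi$ by \THM{1} (with $X=L$ and $V=C(L,E)$) gives $\bar\psi\dd\mmM(C(L,E))\to\KKK$ satisfying (BC*); since the target is scalar, (BC*) merely says that $\bar\psi$ is sequentially continuous along uniformly bounded, weakly pointwise convergent sequences. Hence the set $\{h\in\mmM(C(L,E))\dd\RE\bar\psi(h)\leqsl c\}$ is closed under (M1) for every real $c$, and with $c\df\sup_{g\in\kkK}\RE\psi(g|_L)$ it contains $\kkK|_L$, so it contains $\mmM(\kkK|_L)$ and, as $f\in N$, also $f|_L$. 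Because $\bar\psi$ extends $\psi$ and $f|_L\in C(L,E)$, this yields $\RE\varphi(f)-\epsi\|f\|\leqsl\RE\psi(f|_L)=\RE\bar\psi(f|_L)\leqsl c\leqsl\alpha+\epsi\sup_{g\in\kkK}\|g\|$. When $\Omega$ is compact one takes $L=\Omega$ and $\epsi=0$; when $\kkK$ is bounded one lets $\epsi\to0$; in either case $\RE\varphi(f)\leqsl\alpha$, a contradiction. This settles the first two cases.

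The genuinely harder case is $E$ a $C^*$-algebra with $\kkK$ a $*$-subalgebra, where $\kkK$ need not be bounded and the tail term $\epsi\sup_{g\in\kkK}\|g\|$ is useless. Here I would first note that, $\kkK$ being a subspace, the separating $\varphi$ can be taken to vanish on $\kkK$ entirely, and I would then exploit the multiplicative and $*$-structure: using an approximate unit of the $C^*$-algebra $\overline{\kkK}$ (approximated from within $\kkK$) together with closure of $\kkK$ under products, one should be able to replace each $g$ by a product essentially concentrated on $L$, rendering $\int_{\Omega\setminus L}g\dint\mu$ negligible without any \emph{a priori} control of $\|g\|$ and thereby reducing to the estimate already used. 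I expect this $C^*$-case to be the main obstacle: controlling the tail uniformly over an unbounded $*$-subalgebra is delicate and presumably requires the Stone--Weierstrass-type fact that $\mmM(\kkK|_L)$ inherits enough of the algebra structure; it is also the only place where the hypothesis on $E$ and $\kkK$ is actually used.
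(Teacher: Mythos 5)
Your easy inclusion and your treatment of the first two cases are correct and follow essentially the same route as the paper: separation, representation of the separating functional by a regular Borel i-measure via \THM{3}, localisation to a compact set $L$ carrying all but $\epsi$ of the semivariation (which indeed follows from \COR{reg} and inner regularity of the dominating finite measure), and transfer of the linear inequality from $\kkK\bigr|_L$ to $\mmM\bigl(\kkK\bigr|_L\bigr)$ via the bounded convergence property of the extension. The paper packages this as the more general \THM{closure} (stated for the larger sets $\bar{\mmM}$ and for countable Borel partitions of $\Omega$), but the mechanism is identical.

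The genuine gap is the third case, which you leave as a programme rather than a proof. As you yourself suspect, an approximate unit of $\overline{\kkK}$ gives no uniform control of the tail $\int_{\Omega\setminus L} g\dint{\mu}$ over the unbounded set $\kkK$, and choosing $\varphi$ to vanish on $\kkK$ does not make that tail vanish from $\psi(g\bigr|_L)$. The missing idea --- which is how the paper's \PRO{C*} proceeds --- is to reduce the $C^*$-case to the two cases you have already established, rather than to redo the separation argument. One may assume $\aaA \df \kkK$ is norm closed (this only enlarges the right-hand set and does not change the closure). Fix $R > \|f\|$ and let $L \subset \Omega$ be compact. By your compact case applied to the compact space $L$ and the convex set $\aaA\bigr|_L$, the hypothesis $f\bigr|_L \in \mmM\bigl(\aaA\bigr|_L\bigr)$ already yields that $f\bigr|_L$ lies in the norm closure of $\aaA\bigr|_L$. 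Now the restriction map $\aaA \ni g \mapsto g\bigr|_L$ is a $*$-homomorphism of $C^*$-algebras, so its range $\aaA\bigr|_L$ is closed and it maps the open unit ball of $\aaA$ onto the open unit ball of $\aaA\bigr|_L$; since $\|f\bigr|_L\| < R$, there is $g \in \aaA$ with $\|g\| < R$ and $g\bigr|_L = f\bigr|_L$. Hence $f\bigr|_L \in \bigl(\aaA \cap B(R)\bigr)\bigr|_L \subset \mmM\bigl((\aaA \cap B(R))\bigr|_L\bigr)$, where $B(R) \df \{h \in C_0(\Omega,E)\dd\ \|h\| \leqsl R\}$, and this holds for every compact $L$ with the \emph{same} $R$. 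Your bounded case applied to the bounded convex set $\aaA \cap B(R)$ now gives $f \in \overline{\aaA \cap B(R)} \subset \aaA$. Without this norm-controlled lifting (or an equivalent substitute) the tail estimate in your scheme cannot be closed, and this lifting is precisely where the hypothesis that $\kkK$ is a $*$-subalgebra of a $C^*$-algebra enters.
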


The above result seems to be a convenient tool. Recently we use some of its variations to describe
models for subhomogeneous $C^*$-algebras (which may be seen as a solution of a long-standing
problem). The paper on this is in preparation.\par
The paper is organised as follows. Section~2 is devoted to the proof of \THM{1} and some of its
generalisations. In Section~3 we introduce \textit{variationally} sequentially complete Banach
spaces (which all weakly sequentially complete as well as all dual Banach spaces belong to), give
a new characterisation of weakly sequentially complete Banach spaces and formulate a variation
of \THM{1} for operators taking values in variationally sequentially complete Banach spaces.
The fourth part discusses in details i-measures and contains a preliminary material to the proof
of \THM{3}. Section~5 is devoted to weak* i-measures. Section~6 discusses regular i-measures as well
as regular weak* i-measures. It contains a proof of \THM{3} and its variations for operators taking
values in variationally sequentially complete Banach spaces containing no isomorphic copy
of $\ell_{\infty}^{\RRR}$ (see \THM{vsc}) and in dual Banach spaces (consult \THM{W*}) as well as
totally arbitrary Banach spaces (see \COR{Riesz}). The last, seventh part is devoted to the proof
of \THM{2} and some of its variations. We give there also an illustrative application and an example
showing that the boundedness condition in the second case of \THM{2} cannot be, in general, dropped.

\subsection*{Notation and terminology} Throughout the whole paper, all topological spaces are
assumed to be Hausdorff. $X$, $\Omega$, and $E$ and $F$ are reserved to denote, respectively,
a compact space, a locally compact space and two Banach spaces over the field $\KKK$ of real
or complex numbers. The dual of a locally convex topological vector space $(G,\tau)$ is denoted
by $(G,\tau)^*$ (or simply $G^*$ if it is known from the context with respect to which topology
on $G$ the dual is taken) and is understood as the vector space of all continuous linear functionals
on $(G,\tau)$. A subset $A$ of a topological space $Y$ is \textit{sequentially closed} if $A$
contains the limits of all convergent (in $Y$) sequences whose entries belong to $A$. $A$ is
\textit{$\sigma$-compact} if it is a countable union of compact subsets of $Y$. Finally, $\Bb(Y)$
stands for the $\sigma$-algebra of all \textit{Borel} sets in $Y$; that is, $\Bb(Y)$ is the smallest
$\sigma$-algebra of subsets of $Y$ that contains all open sets.\par
All notations and terminologies introduced in \DEF[s]{M(A)} and \DEF[]{i-m} are obligatory.

\section{Extending linear operators}

\begin{dfn}{measurable}
Let $\Mm$ be a $\sigma$-algebra on a set $Z$. A function $f\dd Z \to E$ is said to be
\textit{$\Mm$-measurable} if
\begin{itemize}
\item $f(X)$ is a separable subspace of $E$; and
\item $f$ is weakly $\Mm$-measurable; that is, for any $\psi \in E^*$, the function $\psi \circ f\dd
 Z \to \KKK$ is $\Mm$-measurable.
\end{itemize}
Thanks to a theorem of Pettis \cite{pet}, $f$ is $\Mm$-measurable iff $f(Z)$ is a separable subspace
of $E$ and the inverse image of every Borel set in $E$ under $f$ belongs to $\Mm$.\par
$M_{\Mm}(Z,E)$ is defined as the subspace of $\ell_{\infty}(Z,E)$ consisting of all bounded
$\Mm$-measurable functions $f\dd Z \to E$.\par
For a compact space $X$, let $\Mm(X)$ be the smallest $\sigma$-algebra on $X$ that contains all
closed sets in $X$ of type $\ggG_{\delta}$. \textit{$M(X,E)$} stands for $M_{\Mm(X)}(X,E)$.
\end{dfn}

It is worth noting here that, in general, not every open set in $X$ belongs to $\Mm(X)$. But if $X$
is metrisable (or, more generally, perfectly normal), then $\Mm(X) = \Bb(X)$.\par
The next result is certainly known. For the reader's convenience, we give its proof.

\begin{lem}{1}
$\mmM(C(X,E)) = M(X,E)$.
\end{lem}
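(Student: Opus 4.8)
The plan is to prove the two inclusions separately, writing $W \df \mmM(C(X,E))$ and using that, by definition, $W$ is the smallest subset of $\ell_{\infty}(X,E)$ containing $C(X,E)$ and closed under the operation (M1), and that — being generated from the linear space $C(X,E)$ — it is itself a linear subspace. First I would record two elementary closure properties. On one hand $W$ is closed under \emph{uniform} limits: a uniformly convergent sequence is uniformly bounded and converges pointwise in norm, hence weakly, so (M1) applies. On the other hand, using linearity together with (M1), $W$ absorbs pointwise limits of sequences of the form $\sum_{n\leqsl N}\chi_{A_n}x$ whenever these partial sums lie in $W$ and the $A_n$ are pairwise disjoint (such a sequence is bounded by $\|x\|$ and converges weakly pointwise).

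For the inclusion $W \subseteq M(X,E)$ I would simply verify that $M(X,E)$ enjoys (M0)--(M1) and invoke minimality. A continuous $f$ is bounded with compact, hence separable, image, and for $\psi\in E^*$ the map $\psi\circ f$ is continuous, so each sublevel set is a closed $\ggG_{\delta}$ set and thus lies in $\Mm(X)$; hence $C(X,E)\subseteq M(X,E)$. If $f_n\in M(X,E)$ are uniformly bounded and converge weakly pointwise to $f\in\ell_{\infty}(X,E)$, then $\psi\circ f=\lim_n\psi\circ f_n$ is $\Mm(X)$-measurable for every $\psi$, while $f(X)$ lies in the norm-closed (so weakly closed) separable subspace generated by $\bigcup_n f_n(X)$; therefore $f\in M(X,E)$.

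The substantial direction is $M(X,E)\subseteq W$. Here I would first reduce to characteristic functions: any $f\in M(X,E)$ is a uniform limit of countably-valued measurable functions $g=\sum_i\chi_{A_i}y_i$ (partition the separable image into small measurable pieces, with $A_i\in\Mm(X)$ by Pettis' theorem), and each such $g$ is the bounded weak-pointwise limit of its finite partial sums. By the two closure properties recorded above it then suffices to prove that $\chi_A x\in W$ for every $A\in\Mm(X)$ and $x\in E$. To this end I would study $\ggG\df\{A\subseteq X\dd\ \chi_A x\in W\text{ for all }x\in E\}$. The crucial point is that $\Mm(X)$ is generated not by the open sets but by the closed $\ggG_{\delta}$ sets, which in the normal space $X$ are exactly the zero sets of continuous functions; these form a $\pi$-system since $\{\phi=0\}\cap\{\psi=0\}=\{\phi+\psi=0\}$ for $\phi,\psi\geqsl0$. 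For a zero set $A=\phi^{-1}(0)$ the continuous functions $\max(1-n\phi,0)\,x$ converge pointwise in norm and boundedly to $\chi_A x$, so every zero set belongs to $\ggG$. Linearity of $W$ gives closure under complementation (via $\chi_{X\setminus A}x=x-\chi_A x$), and the disjoint-union property above gives closure under countable disjoint unions; together with $X\in\ggG$ this makes $\ggG$ a Dynkin ($\lambda$-)system. Dynkin's $\pi$--$\lambda$ theorem then yields $\ggG\supseteq\sigma(\text{zero sets})=\Mm(X)$, which finishes the reduction and hence the lemma.

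The main obstacle is precisely this second inclusion, and within it the warning (in the remark preceding the lemma) that open sets need not lie in $\Mm(X)$: the whole argument must be organised around zero sets as the generating $\pi$-system and around the genuinely continuous approximants $\max(1-n\phi,0)$, with the $\pi$--$\lambda$ theorem performing the bookkeeping that promotes these generators to the full $\sigma$-algebra. The other delicate points — separability of the image of a weak pointwise limit in the easy inclusion, and the measurability of the sets $A_i$ arising in the simple-function reduction — are routine once Pettis' characterisation is in hand.
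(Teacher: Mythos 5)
Your proof is correct, and the overall skeleton (easy inclusion by verifying that $M(X,E)$ satisfies (M0)--(M1); hard inclusion by reducing to countably-valued functions, then to indicators $\chi_A x$, then to closed $\ggG_{\delta}$ sets approximated by continuous functions) coincides with the paper's. The genuine difference lies in how you promote the generators to the full $\sigma$-algebra $\Mm(X)$: you observe that the closed $\ggG_{\delta}$ sets are exactly the zero sets, which form a $\pi$-system, and apply Dynkin's $\pi$--$\lambda$ theorem to the class $\ggG=\{A\dd\ \chi_A x\in W\ \text{for all } x\}$, whereas the paper first treats the scalar case, exploits that the relevant class of functions is a \emph{unital subalgebra} of $\ell_{\infty}(X,\KKK)$ (so that the corresponding class of sets is outright a $\sigma$-algebra, with intersections handled by $j_{A\cap B}=j_A j_B$), and then transfers to general $E$ via the auxiliary families $F(e)$ and $\Mm_e$. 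Your route avoids the scalar reduction entirely and is arguably cleaner; the multiplicative structure is unavailable for vector-valued functions anyway, so the $\pi$--$\lambda$ bookkeeping is the natural substitute. One thing the paper's argument buys that yours does not: it establishes the strictly stronger statement that $M(X,E)$ is already the smallest set containing $C(X,E)$ and closed under uniformly bounded pointwise limits in the \emph{norm} topology of $E$ (condition (M1')), a by-product that is reused later in the proof of \LEM{C}; your argument, which freely invokes closure under weak-pointwise limits, proves the lemma as stated but not this refinement. (All your intermediate claims check out: the identification of closed $\ggG_{\delta}$ sets with zero sets uses normality of the compact space $X$, the approximants $\max(1-n\phi,0)x$ are continuous and converge boundedly in norm, and the separability of the image of a weak-pointwise limit follows since the closed linear span of $\bigcup_n f_n(X)$ is norm-closed and convex, hence weakly closed.)
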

\begin{proof}
First of all, observe that $\Mm(X)$ is the smallest $\sigma$-algebra on $X$ with respect to which
all $\KKK$-valued continuous functions on $X$ are measurable. It is therefore an elementary exercise
to check that the set $B = M(X,E)$ satisfies conditions (M0)--(M1) for $A = C(X,E)$. Consequently,
$\mmM(C(X,E)) \subset M(X,E)$. Instead of proving the reverse inclusion, we shall show a little bit
more: that $M(X,E)$ coincides with the smallest set $N(E)$ among all $B \subset \ell_{\infty}(X,E)$
which include $C(X,E)$ and satisfy the condition:
\begin{enumerate}[(M1')]
\item whenever $f_n \in B$ are uniformly bounded and converge pointwise to $f \in
 \ell_{\infty}(X,E)$ in the norm topology of $E$, then $f \in B$.
\end{enumerate}
To this end, for any $A \subset X$, denote by $j_A\dd X \to \{0,1\}$ the characteristic function
of $A$. First we assume $E = \KKK$. Observe that $N(\KKK)$ is a unital subalgebra
of $\ell_{\infty}(X,\KKK)$. This implies that $\Nn \df \{A \in \Mm(X)\dd\ j_A \in N(\KKK)\}$ is
a $\sigma$-algebra on $X$. So, to conclude that $\Nn = \Mm(X)$, it suffices to show that each closed
set of type $\ggG_{\delta}$ belongs to $\Nn$. But this is immediate, since for any such set $K$
there are sequences $U_1 \supset U_2 \supset \ldots$ of open sets in $X$ and $f_1,f_2,\ldots\dd X
\to [0,1]$ of continuous functions such that $j_K \leqsl f_n \leqsl j_{U_n}$ and $K =
\bigcap_{n=1}^{\infty} U_n$. Consequently, $j_K$ is the pointwise limit of $f_n$'s and hence $K \in
\Nn$. This shows that $\Nn = \Mm(X)$. Now, since every scalar-valued bounded $\Mm(X)$-measurable
function is a uniform limit of linear combinations of characteristic functions of members
of $\Mm(X)$, we get that $M(X,\KKK) \subset N(\KKK)$. We turn to the general case.\par
For simplicity, we shall call any function $u\dd X \to E$ such that $u(X)$ is countable (finite
or not) and the inverse image of every point of $E$ under $u$ is a member of $\Mm(X)$
\textit{semisimple}. For any scalar-valued function $f\dd X \to \KKK$ and each vector $x \in E$,
we use $f(\cdot)x$ to denote a function from $X$ into $E$, computed pointwise. Now fix $e \in E$ and
consider families $F(e) \df \{u \in M(X,\KKK)\dd\ u(\cdot)e \in N(E)\}$ and $\Mm_e \df \{B \in
\Mm(X)\dd\ j_B \in F(e)\}$. Since $C(X,\KKK) \subset F(e)$, it follows from the previous part
of the proof that $F(e) = M(X,\KKK)$ and $\Mm(e) = \Mm(X)$. One easily deduces from these
connections and (M1') that
\begin{itemize}
\item[($\star$)] any semisimple function $u\dd X \to E$ belongs to $N(E)$.
\end{itemize}
Now take any $u \in M(X,E)$. Since the range of $u$ is a separable space and $u$ is weakly
$\Mm(X)$-measurable, one concludes that:
\begin{itemize}
\item the inverse image of any closed ball in $E$ under $u$ belongs to $\Mm(X)$;
\item for any $\epsi > 0$, there exists a countable (finite or not) collection of pairwise disjoint
 members of $\Mm(X)$ whose union coincides with $X$ and images under $u$ are contained in closed
 $\epsi$-balls of $E$.
\end{itemize}
Now using the latter of the above properties, for each $n > 0$, construct a semisimple function
$u_n\dd X \to E$ whose uniform distance from $u$ is less than $1/n$. So, $u$ is a uniform limit
of semisimple function and hence $u \in N(E)$, by ($\star$).
\end{proof}

Although the next lemma is very simple, it is crucial for our further purposes.

\begin{lem}{2}
Let $Y$ be a compact space and $U\dd E \to C(Y,\KKK)$ be a linear isometric embedding. For each
$v \in \ell_{\infty}(X,E)$ let $L v\dd X \times Y \to \KKK$ be given by $(L v)(x,y) \df U(v(x))(y)$.
Then the assignment $v \mapsto L v$ defines a linear isometric embedding $L$ of $\ell_{\infty}(X,E)$
into $\ell_{\infty}(X \times Y,\KKK)$ such that:
\begin{enumerate}[\upshape(L1)]
\item $L(C(X,E)) \subset C(X \times Y,\KKK)$;
\item if $v_n \in \ell_{\infty}(X,E)$ are uniformly bounded and converge pointwise
 to $v \in \ell_{\infty}(X,E)$ in the weak topology of $E$, then $L v_n$ are uniformly bounded
 as well and converge pointwise to $L v$;
\item for any set $A \subset \ell_{\infty}(X,E)$, $L(\mmM(A)) \subset \mmM(L(A))$ where the sets
 $\mmM(A)$ and $\mmM(L(A))$ are computed in, respectively, $\ell_{\infty}(X,E)$ and
 $\ell_{\infty}(X \times Y,\KKK)$.
\end{enumerate}
\end{lem}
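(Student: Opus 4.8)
The plan is to check the four assertions in turn, with (L2) providing the essential mechanism behind (L3). First I would establish that $L$ is a well-defined linear isometric embedding. Linearity is immediate from the linearity of $U$ together with the pointwise definition of $L v$. For the isometry (which simultaneously shows that $L v$ is bounded) I would compute
\[
\|L v\|_{\infty} = \sup_{(x,y) \in X \times Y} |U(v(x))(y)| = \sup_{x \in X} \|U(v(x))\|_{\infty} = \sup_{x \in X} \|v(x)\| = \|v\|_{\infty},
\]
where the third equality uses that $U$ is isometric. In particular $L$ is injective, so it is an isometric embedding of $\ell_{\infty}(X,E)$ into $\ell_{\infty}(X \times Y,\KKK)$.

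For (L1) I would invoke the joint continuity of the evaluation map $C(Y,\KKK) \times Y \ni (g,y) \mapsto g(y) \in \KKK$, which follows at once from the estimate $|g_n(y_n) - g(y)| \leqsl \|g_n - g\|_{\infty} + |g(y_n) - g(y)|$ together with continuity of $g$. If $v \in C(X,E)$, then $x \mapsto v(x)$ is continuous and $U$ is bounded, hence continuous, so $x \mapsto U(v(x))$ is a continuous map $X \to C(Y,\KKK)$. Composing the continuous map $(x,y) \mapsto (U(v(x)),y)$ with the evaluation shows that $L v$ is continuous on $X \times Y$.

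The one genuinely content-bearing observation is the one driving (L2): for each fixed $y \in Y$ the functional $\psi_y\dd E \to \KKK$, $\psi_y(e) \df U(e)(y)$, is linear and satisfies $|\psi_y(e)| \leqsl \|U(e)\|_{\infty} = \|e\|$, so $\psi_y \in E^*$. Granting this, suppose $v_n$ are uniformly bounded and converge pointwise to $v$ in the weak topology of $E$. The uniform bound transfers via the isometry, since $\|L v_n\|_{\infty} = \|v_n\|_{\infty}$, and for any $(x,y)$ we have $(L v_n)(x,y) = \psi_y(v_n(x)) \to \psi_y(v(x)) = (L v)(x,y)$ because $v_n(x) \to v(x)$ weakly and $\psi_y \in E^*$. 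This is exactly (L2).

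Finally, for (L3) I would run the standard minimality argument built on (L2), keeping in mind that on the scalar side the weak and norm topologies of $\KKK$ coincide, so the closure condition defining $\mmM(L(A))$ in $\ell_{\infty}(X \times Y,\KKK)$ is just closure under uniformly bounded pointwise limits. Set $B \df \{v \in \ell_{\infty}(X,E)\dd\ L v \in \mmM(L(A))\} = L^{-1}(\mmM(L(A)))$. Then $A \subset B$ by (M0) for $\mmM(L(A))$, and $B$ is closed under weak pointwise limits of uniformly bounded sequences: if $v_n \in B$ converge to $v$ as in (M1), then $L v_n \in \mmM(L(A))$, and by (L2) the $L v_n$ stay uniformly bounded and converge pointwise to $L v$, whence $L v \in \mmM(L(A))$ by (M1) applied in $\ell_{\infty}(X \times Y,\KKK)$. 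Thus $B$ satisfies (M0)--(M1) for $A$, so $\mmM(A) \subset B$ by the minimality of $\mmM(A)$, which is precisely $L(\mmM(A)) \subset \mmM(L(A))$. I expect no real obstacle in this last step; the whole argument is formal once (L2) is in hand, and the only point to keep straight is that the target closure is taken for scalar functions, where weak pointwise convergence is ordinary pointwise convergence, so (L2) feeds directly into (M1).
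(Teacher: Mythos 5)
Your proof is correct and follows essentially the same route as the paper: the paper's terse remark that ``$U$ is continuous in the weak topologies and the weak topology of $C(Y,\KKK)$ is finer than the pointwise convergence topology'' is exactly your observation that each $\psi_y = \operatorname{ev}_y \circ U$ lies in $E^*$, and your minimality argument for (L3) is just the detail behind the paper's one-line ``(L3) is implied by (L2).''
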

\begin{proof}
It is readily seen that $L\dd \ell_{\infty}(X,E) \to \ell_{\infty}(X \times Y,\KKK)$ is linear and
isometric. Point (L1) is a well-known topological result---consult, for example, Theorems~3.4.7,
3.4.8 and 3.4.9 in \cite{eng}. (L2) follows from the facts that $U$ is continuous in the weak
topologies of $E$ and $C(Y,\KKK)$, and the weak topology of $C(Y,\KKK)$ is finer than the pointwise
convergence topology. Finally, (L3) is implied by (L2).
\end{proof}

Let us call a locally convex topological vector space $G$ \textit{initial} if its topology coincides
with the weak topology of $G$. Equivalently, $G(,\tau_0)$ is initial iff $\tau_0$ is the coarsest
topology among all locally convex topologies $\tau$ on $G$ for which the sets $(G,\tau)^*$ and
$(G,\tau_0)^*$ (considered here with no topology) coincide. Important examples of such spaces are
Banach spaces equipped with the weak topologies as well as dual Banach spaces equipped with
the weak* topologies. Recall that $G$ is \textit{sequentially complete} if every Cauchy sequence
in $G$ is convergent. The following result is a generalisation of \THM{1}:

\begin{thm}{1'}
Let $G$ be an initial sequentially complete locally convex topological vector space and $V$ be
a linear subspace of $C(X,E)$. Every continuous linear operator $T\dd V \to G$ is uniquely
extendable to a linear operator $\bar{T}\dd \mmM(V) \to G$ such that:
\begin{itemize}
\item[(BC')] whenever $f_n \in \mmM(V)$ are uniformly bounded and converge pointwise to $f \in
 \mmM(V)$ in the weak topology of $E$, then $\bar{T} f_n$ converge to $\bar{T} f$.
\end{itemize}
Moreover, $\bar{T}$ is continuous.
\end{thm}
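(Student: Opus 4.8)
The plan is to reduce everything to the scalar target $\KKK$ via \LEM{2}, and then to bootstrap to a general $G$ one continuous functional at a time, using \emph{initiality} and sequential completeness to pull the construction back from the scalar field into $G$.

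I would begin with uniqueness, which is purely formal. If $\bar{T}_1,\bar{T}_2\dd\mmM(V)\to G$ both extend $T$ and satisfy (BC'), set $D\df\{f\in\mmM(V)\dd\ \bar{T}_1f=\bar{T}_2f\}$. Then $V\subset D$, and $D$ is stable under the operation in (M1): if $f_n\in D$ are uniformly bounded and converge weakly pointwise to $f\in\mmM(V)$, then (BC') gives $\bar{T}_if_n\to\bar{T}_if$, and since $\bar{T}_1f_n=\bar{T}_2f_n$ and $G$ is Hausdorff the two limits coincide. Thus $D$ satisfies (M0)--(M1) for $A=V$, so minimality of $\mmM(V)$ forces $D=\mmM(V)$.

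Next comes the scalar case $G=\KKK$. Fix an isometric embedding $U\dd E\to C(Y,\KKK)$ into $C(Y,\KKK)$ for a suitable compact $Y$ (e.g.\ the weak* unit ball of $E^*$), and form the embedding $L\dd\ell_{\infty}(X,E)\to\ell_{\infty}(X\times Y,\KKK)$ of \LEM{2}; write $K\df X\times Y$. Given a continuous functional $\Lambda\dd V\to\KKK$, the map $\Lambda\circ(L|_V)^{-1}$ is continuous on $L(V)\subset C(K,\KKK)$, so by Hahn--Banach and the classical Riesz theorem it extends to $g\mapsto\int_K g\dint{\nu}$ for some finite regular Borel measure $\nu$ on $K$. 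I would then define $\bar{\Lambda}f\df\int_K (Lf)\dint{\nu}$ for $f\in\mmM(V)$; this is legitimate because (L3) together with \LEM{1} gives $Lf\in\mmM(L(V))\subset\mmM(C(K,\KKK))=M(K,\KKK)$, so $Lf$ is bounded and $\Mm(K)$-measurable, hence $\nu$-integrable. That $\bar{\Lambda}$ satisfies the scalar form of (BC') is exactly Lebesgue's bounded convergence theorem: by (L2), a uniformly bounded weakly pointwise convergent sequence $f_n\to f$ is sent to a uniformly bounded \emph{pointwise} convergent sequence $Lf_n\to Lf$ of scalar functions, whence $\int_K(Lf_n)\dint{\nu}\to\int_K(Lf)\dint{\nu}$. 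Moreover $\lvert\bar{\Lambda}f\rvert\le\lVert\nu\rVert\,\lVert f\rVert=\lVert\Lambda\rVert\,\lVert f\rVert$, and by the uniqueness already proved $\bar{\Lambda}$ does not depend on the Hahn--Banach/Riesz choices.

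For general initial sequentially complete $G$ I would glue these scalar extensions. For each $\phi\in G^*$ let $\overline{\phi\circ T}\dd\mmM(V)\to\KKK$ be the unique scalar extension just constructed, and put
\[
D\df\{f\in\mmM(V)\dd\ \exists\,g\in G,\ \phi(g)=\overline{\phi\circ T}(f)\ \text{for all }\phi\in G^*\}.
\]
Since $G$ is Hausdorff, $G^*$ separates points, so such a $g$ is unique; denote it $\bar{T}f$. Clearly $V\subset D$ with $\bar{T}v=Tv$, and linearity of $\bar{T}$ on $D$ follows from that of each $\overline{\phi\circ T}$. The crux is that $D$ is stable under (M1). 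If $f_n\in D$ are uniformly bounded, converge weakly pointwise to $f\in\mmM(V)$, then for every $\phi$ the scalar (BC') yields $\phi(\bar{T}f_n)=\overline{\phi\circ T}(f_n)\to\overline{\phi\circ T}(f)$; thus $(\bar{T}f_n)$ is weakly Cauchy in $G$. Here \textbf{initiality is the key obstacle to clear}: because the topology of $G$ coincides with $\sigma(G,G^*)$, a weakly Cauchy sequence is Cauchy, and sequential completeness then provides a limit $g\in G$ with $\phi(g)=\overline{\phi\circ T}(f)$ for all $\phi$, so $f\in D$. Minimality of $\mmM(V)$ gives $D=\mmM(V)$, defining $\bar{T}$ everywhere; the same computation shows $\bar{T}f_n\to\bar{T}f$, i.e.\ (BC'). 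Finally, from $\lvert\phi(\bar{T}f)\rvert=\lvert\overline{\phi\circ T}(f)\rvert\le\lVert\phi\circ T\rVert\,\lVert f\rVert$ and the fact that the seminorms $\{\lvert\phi\rvert\dd\ \phi\in G^*\}$ generate the topology of the initial space $G$, I obtain continuity of $\bar{T}$.
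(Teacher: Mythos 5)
Your proof is correct, and its backbone is the same as the paper's: embed $E$ isometrically into $C(Y,\KKK)$ and pass to $X\times Y$ via \LEM{2}, represent each scalar functional $\phi\circ T$ by a regular Borel measure on $X\times Y$ via Hahn--Banach and Riesz, apply Lebesgue's dominated convergence theorem, and use initiality plus sequential completeness to convert ``$\phi(z_n)$ converges for every $\phi\in G^*$'' into convergence of $z_n$ in $G$ (the paper isolates this as condition (CC)). Where you genuinely diverge is in the bookkeeping. The paper stratifies $\mmM(L(V))$ into a transfinite hierarchy $W_\alpha$ of Baire-type classes and builds the extension ordinal by ordinal, carrying the representation $\psi(S_\alpha f)=\int f\dint{\mu_\psi}$ through the induction, so that integrability of the functions it meets is supplied by the induction itself. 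You instead define each scalar extension at one stroke by the closed formula $\bar\Lambda f=\int_K(Lf)\dint{\nu}$, which obliges you to know up front that $Lf$ is bounded and Borel measurable for every $f\in\mmM(V)$ --- you get this correctly from (L3) together with \LEM{1} --- and you then recover the $G$-valued extension by checking that the set $D$ of liftable functions satisfies (M0)--(M1) and invoking the minimality of $\mmM(V)$. Your route dispenses with the ordinal induction entirely (and with the auxiliary fact that every sequence in $\mmM(W)$ lies in a single $W_\alpha$), at the modest cost of leaning on \LEM{1}; the paper's induction is self-contained on that point but longer. You also handle the two delicate points properly: well-definedness of the scalar extension independently of the Hahn--Banach and Riesz choices (deduced from the uniqueness you prove first), and the deduction of continuity of $\bar T$ from the seminorm estimates $|\phi(\bar Tf)|\leqsl\|\phi\circ T\|\,\|f\|$ rather than from metrisability of $\mmM(V)$ as the paper does.
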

\begin{proof}
It follows from (BC') and the very definition of $\mmM(V)$ that $\bar{T}$ is unique. To establish
the existence of $\bar{T}$, first note that the initiality and sequential completeness of $G$ imply
that:
\begin{itemize}
\item[(CC)] if $z_n \in G$ are such that $\psi(z_n)$ converge (in $\KKK$) for any $\psi \in G^*$,
 then $z_n$ converge (in $G$).
\end{itemize}
Next, there is an isometric linear embedding $U\dd E \to C(Y,\KKK)$ for a suitably chosen compact
space $Y$. Let $L\dd \ell_{\infty}(X,E) \to \ell_{\infty}(X \times Y,\KKK)$ be as specified
in \LEM{2}. We put $W \df L(V)$ and define $S\dd W \to G$ by $S \df T \circ (L\bigr|_V)^{-1}$.
It is enough to show that there is a linear extension $\bar{S}\dd \mmM(W) \to G$ of $S$ (where
$\mmM(W)$ is computed in $\ell_{\infty}(X \times Y,\KKK)$) such that:
\begin{itemize}
\item[(BC'')] whenever $f_n \in \mmM(W)$ are uniformly bounded and converge pointwise to $f \in
 \mmM(W)$, then $\bar{S}(f_n)$ converge to $\bar{S}(f)$,
\end{itemize}
because then $\bar{T} \df \bar{S} \circ L\bigr|_{\mmM(V)}$ is well defined (by condition (L3)
of \LEM{2}), extends $T$ and satisfies (BC') (thanks to (L2)). For simplicity, everywhere below
$\alpha$ denotes an arbitrary countable ordinal. To establish the existence of $\bar{S}$, for any
$\alpha$, we define a space $W_{\alpha}$ by transfinite induction as follows: $W_0 = W$ and for
$\alpha > 0$, $W_{\alpha}$ consists of all pointwise limits of uniformly bounded sequences from
$\bigcup_{\xi<\alpha} W_{\xi}$ (convergent in the pointwise topology). It is easy to check that each
of $W_{\alpha}$ is a linear subspace of $C(X \times Y,\KKK)$ and that $\mmM(W) = \bigcup_{\alpha}
W_{\alpha}$. Since any sequence of members of $\mmM(W)$ is contained in $W_{\alpha}$ for some
$\alpha$, it suffices to show that there exists a transfinite sequence $S_{\alpha}\dd W_{\alpha} \to
G$ of linear operators such that:
\begin{enumerate}[(E1)]
\item $S_0 = S$;
\item $S_{\alpha}$ extends $S_{\xi}$ provided $\xi < \alpha$;
\item whenever $f_n \in W_{\alpha}$ are uniformly bounded and converge pointwise to $f \in
 W_{\alpha}$, then $S_{\alpha} f_n$ converge to $S_{\alpha} f$
\end{enumerate}
(because then $\bar{S}$ may simply be defined by $\bar{S} f \df S_{\alpha} f$ where $\alpha$ is
chosen so that $f \in W_{\alpha}$). It follows from the Hahn-Banach and the Riesz characterisation
theorems that for any $\psi \in G^*$, there is a $\KKK$-valued regular Borel measure $\mu_{\psi}$
on $X \times Y$ such that:
\begin{equation*}
\psi(S f) = \int_{X \times Y} f \dint{\mu_{\psi}} \qquad (f \in W).
\end{equation*}
Define $S_0$ as specified in (E1) and assume that for some $\alpha > 0$, $S_{\xi}$ is defined for
any $\xi < \alpha$ in a way such that for each $\psi \in G^*$,
\begin{equation}\label{eqn:repr}
\psi(S_{\xi} f) = \int_{X \times Y} f \dint{\mu_{\psi}} \qquad (f \in W_{\xi}).
\end{equation}
We shall define $S_{\alpha}$ so that \eqref{eqn:repr} holds for $\xi = \alpha$ and then we shall
check that conditions (E2)--(E3) are satisfied. Let $u \in W_{\alpha}$. There is a uniformly bounded
sequence $u_n \in W_{\xi_n}$ (with $\xi_n < \alpha$) which converges pointwise to $u$. It then
follows from Lebesgue's dominated convergence theorem and \eqref{eqn:repr} that
\begin{equation}\label{eqn:seq}
\lim_{n\to\infty} \psi(S_{\xi_n} u_n) = \int_{X \times Y} u \dint{\mu_{\psi}}
\end{equation}
for each $\psi \in G^*$. So, we conclude from (CC) that $S_{\xi_n} u_n$ converge. We define
$S_{\alpha} u$ as the limit of the last mentioned sequence. It follows from \eqref{eqn:seq} that
\eqref{eqn:repr} is satisfied for $\xi = \alpha$ and $f = u$ (and any $\psi \in G^*$). This implies
that the definition of $S_{\alpha} u$ is independent of the choice of the functions $u_n$. Finally,
\eqref{eqn:repr} applied for all $\xi \leqsl \alpha$ shows that (E2) holds, and combined with
Lebesgue's dominated convergence theorem gives (E3) (because $G$ is initial).\par
To complete the proof, it remains to observe that the continuity of $\bar{T}$ follows from (BC')
(since $\mmM(V)$ is metrisable, it suffices to check the sequential continuity).
\end{proof}

\begin{proof}[Proof of \THM{1}]
Taking into account \THM{1'}, it is enough to verify that $F^*$ is initial and sequentially
complete in the weak* topology, and that the extension of $T$ does not increase the norm. Both
the above properties of $F^*$ are immediate. And to convince oneself that $\|\bar{T}\| = \|T\|$,
it suffices to repeat the proof of \THM{1'} and check that $\|S_{\alpha}\| = \|S\|$ for each
countable ordinal $\alpha$, which may simply be provided by choosing the measures $\mu_{\psi}$
(for $\psi \in F = (F,\textup{weak*})^*$) appearing in \eqref{eqn:repr} so that the total variation
$|\mu_{\psi}|(X \times Y)$ of $\mu_{\psi}$ does not exceed $\|S\| \cdot \|\psi\|$.
\end{proof}

As an immediate consequence of \THM{1} and \LEM{1}, we obtain the following result, announced
in the abstract.

\begin{cor}{C-M}
Every continuous linear operator $T\dd C(X,E) \to F^*$ is uniquely extendable to a linear operator
$\bar{T}\dd M(X,E) \to F^*$ satisfying condition \textup{(BC*)} of \THM{1} with $M(X,E)$ inserted
in place of $\mmM(V)$. Moreover, $\bar{T}$ is continuous and $\|\bar{T}\| = \|T\|$.
\end{cor}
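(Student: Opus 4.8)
The plan is to deduce the corollary by specializing \THM{1} to the case $V = C(X,E)$ and then rewriting the resulting statement via the set-theoretic identity of \LEM{1}; there is no new content to prove beyond this translation. First I would apply \THM{1} with the linear subspace $V$ taken to be all of $C(X,E)$, which is of course a linear subspace of itself. This immediately yields a unique linear operator $\bar{T}\dd \mmM(C(X,E)) \to F^*$ extending $T$ and satisfying condition (BC*), together with the assertions that $\bar{T}$ is continuous and that $\|\bar{T}\| = \|T\|$. All four conclusions (existence, uniqueness, continuity, norm equality) come for free from the theorem.

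Next I would substitute $\mmM(C(X,E)) = M(X,E)$, which is exactly the content of \LEM{1}. Since this is an equality of subspaces of $\ell_{\infty}(X,E)$, every occurrence of $\mmM(C(X,E))$ may be replaced literally by $M(X,E)$: in the domain of $\bar{T}$, in both the hypothesis and the conclusion of (BC*), and in the uniqueness clause. In particular, a sequence $f_n$ is a uniformly bounded, weakly pointwise convergent sequence in $\mmM(C(X,E))$ with limit in $\mmM(C(X,E))$ precisely when the same is true in $M(X,E)$, so condition (BC*) transfers verbatim with $M(X,E)$ in place of $\mmM(V)$, as required.

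I do not expect any genuine obstacle here, since both nontrivial inputs---the extension theorem \THM{1} and the identification $\mmM(C(X,E)) = M(X,E)$---are already established. The only point that deserves a moment's care is confirming that the uniqueness furnished by \THM{1} (for operators on $\mmM(C(X,E))$ obeying (BC*)) indeed gives uniqueness for operators on $M(X,E)$; but this is immediate from the same identification, because specifying a linear operator on $M(X,E)$ satisfying (BC*) is literally the same datum as specifying a linear operator on $\mmM(C(X,E))$ satisfying (BC*). Hence the corollary follows at once, and this is precisely the statement announced in the abstract.
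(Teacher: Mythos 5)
Your proposal is correct and is exactly the paper's argument: the corollary is stated there as an immediate consequence of Theorem~\ref{thm:1} applied with $V = C(X,E)$ together with the identification $\mmM(C(X,E)) = M(X,E)$ from Lemma~\ref{lem:1}. Nothing further is needed.
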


\section{Variational sequential completeness}

Recall that a Banach space is \textit{weakly sequentially complete} (briefly, \textit{wsc}) if it is
sequentially complete with respect to the weak topology. Each reflexive Banach space is wsc and
$\ell_1$ is an example of a nonreflexive wsc Banach space. These two exclusive examples are,
in a sense, exhaustive. Namely, by a celebrated result due to Rosenthal \cite{ros}, every wsc Banach
space is either reflexive or contains an isomorphic copy of $\ell_1$. An interesting
characterisation of wsc Banach spaces is given below.

\begin{pro}{wsc}
For a Banach space $F$ \tfcae
\begin{enumerate}[\upshape(a)]
\item Every continuous linear operator $T\dd V \to F$ from a linear subspace $V$ of \textup{(}some
 Banach space of the form\textup{)} $C(X,E)$ extends uniquely to a linear operator $\bar{T}\dd
 \mmM(V) \to F$ such that:
 \begin{itemize}
 \item[(BC)] whenever $f_n \in \mmM(V)$ are uniformly bounded and converge pointwise to $f \in
  \mmM(V)$ in the weak topology of $E$, then $\bar{T} f_n$ converge to $\bar{T} f$ in the weak
  topology of $F$.
 \end{itemize}
 \textup{(}Moreover, $\bar{T}$ is continuous and $\|\bar{T}\| = \|T\|$.\textup{)}
\item $F$ is wsc.
\end{enumerate}
\end{pro}
\begin{proof}
One easily deduces from \THM{1'} that (a) is implied by (b). (The additional claim of (a) may be
shown as explained in the proof of \THM{1}.) To see that the reverse implication also holds, take
a sequence $z_1,z_2,\ldots \in F$ which is Cauchy in the weak topology. Define $X$ as the closed
unit ball of $F^*$ equipped with the weak* topology and put $E \df \KKK$. Further, for each $x \in
F$, we use $e_x\dd X \to E$ to denote the evaluation map at $x$; that is, $e_x(\psi) = \psi(x)$.
Denote by $F_0$ the linear span of all $z_n$, put $V \df \{e_z\dd\ z \in F_0\} \subset C(X,E)$ and
define $T\dd V \to F$ by $T e_z \df z$. It is readily seen that $T$ is continuous (even isometric)
and linear. So, it follows from (a) that there is a linear extension $\bar{T}\dd \mmM(V) \to F$
of $T$ which satisfies (BC). Since the sequence of all $z_n$ is Cauchy in the weak topology of $F$,
the formula $u(\psi) \df \lim_{n\to\infty} \psi(z_n)$ correctly defines a function $u\dd X \to E$.
Notice that the functions $e_{z_n}$ are uniformly bounded and converge pointwise to $u$. Thus,
$u \in \mmM(V)$ and, by (BC), $z_n = \bar{T} e_{z_n}$ converge to $\bar{T} z$ in the weak topology
of $F$.
\end{proof}

\THM[s]{1} and \THM[]{1'} and \PRO{wsc} suggest to distinguish certain Banach spaces, which we do
below.

\begin{dfn}{vsc}
A Banach space $F$ is said to be \textit{variationally sequentially complete} (briefly,
\textit{vsc}) if there is a set $\ffF \subset F^*$ such that:
\begin{enumerate}[(vsc1)]
\item there is a positive constant $\lambda$ such that for any $x \in F$,
 \begin{equation*}
 \frac{1}{\lambda} \sup\{|\psi(x)|\dd\ \psi \in \ffF\} \leqsl \|x\| \leqsl \lambda
 \sup\{|\psi(x)|\dd\ \psi \in \ffF\};
 \end{equation*}
\item whenever $z_n \in F$ are uniformly bounded and $\psi(z_n)$ converge for each $\psi \in \ffF$,
 then there exists $z \in F$ such that $\lim_{n\to\infty} \psi(z_n) = \psi(z)$ for all $\psi \in
 \ffF$.
\end{enumerate}
It is worth noting that the point $z$ appearing in (vsc2) is unique. For simplicity, we shall denote
it by $\ffF$-$\lim_{n\to\infty} z_n$.\par
More specifically, $F$ is called \textit{$\alpha$-vsc} (where $\alpha \geqsl 1$) if there exists
$\ffF \subset F^*$ such that (vsc1)--(vsc2) hold with $\lambda = \alpha$.
\end{dfn}

Basic examples of vsc spaces are wsc as well as dual Banach spaces. It is also clear that a Banach
space is vsc provided it is isomorphic to a vsc Banach space.\par
It is an easy exercise to show that a Banach space is wsc iff it is sequentially closed in the weak*
topology of its second dual. A counterpart of this characterisation for vsc Banach spaces is given
below.

\begin{pro}{vsc-dual}
A Banach space $F$ is vsc iff it is isomorphic to a linear subspace $W$ of some dual Banach space
$Z^*$ such that $W$ is sequentially closed in the weak* topology of $Z^*$.
\end{pro}
\begin{proof}
First assume $F$ is vsc and let $\ffF \subset F^*$ be such that (vsc1)--(vsc2) are fulfilled. We put
$Z \df \ell_1(\ffF,\KKK)$; that is, $Z$ consists of all functions $u\dd \ffF \to \KKK$ such that
$\|u\| \df \sum_{\psi \in \ffF} |u(\psi)| < \infty$. Then $Z^* = \ell_{\infty}(\ffF,\KKK)$. Define
$\Phi\dd F \to \ell_{\infty}(\ffF,\KKK)$ by $(\Phi f)(\psi) = \psi(f)$. It follows from (vsc1) that
$\Phi$ is a well defined topological embedding. We claim that $W \df \Phi(F)$ is sequentially closed
in the weak* topology of $\ell_{\infty}(\ffF,\KKK)$. To see this, let $z_n \in F$ be such that
$\Phi(z_n)$ converge to $u \in \ell_{\infty}(\ffF,\KKK)$ in the weak* topology. Then $\Phi(z_n)$ are
uniformly bounded and, consequently, so are $z_n$. Furthermore, $\psi(z_n)$ converge for any $\psi
\in \ffF$. So, (vsc2) implies that $z \df \ffF\textup{-}\lim_{n\to\infty} z_n$ well defines a vector
in $F$ and $u = \Phi(z)$.\par
Conversely, assume $F$ is isomorphic to $W$ where $W \subset Z^*$ is as specified
in the proposition. It suffices to check that $W$ is vsc. For any $x \in Z$, let $j_x \in W^*$ be
given by $j_x(\psi) \df \psi(x)$. Put $\ffF \df \{j_x\dd\ x \in Z,\ \|x\| \leqsl 1\}$. We see that
(vsc1) holds with $\lambda = 1$. Now assume $\varphi_n \in W$ are such that $\psi(\varphi_n)$
converge for any $\psi \in \ffF$. Then $\varphi_n$ converge pointwise (on the whole $Z$) to some
function $\varphi\dd Z \to \KKK$. It now follows from the Uniform Boundedness Principle that
$\varphi \in Z^*$ and, consequently (since $W$ is sequentially closed), $\varphi \in W$. This shows
that (vsc2) holds and we are done.
\end{proof}

As a consequence, we obtain

\begin{pro}{vsc}
Every continuous linear operator $T\dd V \to F$ from a linear subspace $V$ of \textup{(}some space
of the form\textup{)} $C(X,E)$ into a vsc Banach space $F$ is extendable to a continuous linear
operator $\bar{T}\dd \mmM(V) \to F$.
\end{pro}
\begin{proof}
Let $\Phi\dd F \to W$ be an isomorphism where $W$ is a linear subspace of a dual Banach space $Z^*$
that is sequentially closed in the weak* topology (see \PRO{vsc-dual}). Put $L \df \Phi \circ T\dd V
\to W \subset Z^*$. It follows from \THM{1} that there exists a linear extension $\bar{L}\dd \mmM(V)
\to F^*$ of $L$ such that $\|\bar{L}\| = \|L\|$. What is more, the proof of \THM{1'} shows that all
values of $\bar{L}$ belong to $W$, since $W$ is sequentially closed in $Z^*$. Thus $\bar{T} \df
\Phi^{-1} \circ \bar{L}$ well defines a continuous linear extension of $T$ we searched for.
\end{proof}

For $V = C(X,E)$ (and under an additional assumption on $F$), \PRO{vsc} shall be strengthened
in \COR{vsc}.

\begin{rem}{vsc}
The above proof shows that, under the notation of \PRO{vsc}:
\begin{itemize}
\item every continuous linear operator $T\dd V \to F$ extends to a continuous linear operator
 $\bar{T}\dd \mmM(V) \to F$ such that $\|\bar{T}\| \leqsl \lambda^2 \|T\|$ provided $F$ is
 $\lambda$-vsc;
\item a linear subspace of a dual Banach space which is sequentially closed in the weak* topology is
 $1$-vsc.
\end{itemize}
We shall use these observations in the sequel.
\end{rem}

\PRO{vsc} combined with \REM{vsc} yields

\begin{cor}{w*sc}
Let $F_{sc}$ be the smallest linear subspace of $F^{**}$ that contains $F$ and is sequentially
closed in the weak* topology of $F^{**}$. Every continuous linear operator $T\dd V \to F$ from
a linear subspace $V$ of \textup{(}some space of the form\textup{)} $C(X,E)$ is extendable
to a continuous linear operator $\bar{T}\dd \mmM(V) \to F_{sc}$ such that $\|\bar{T}\| = \|T\|$.
\end{cor}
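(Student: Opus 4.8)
The plan is to realise $F_{sc}$ as a weak*-sequentially-closed subspace of a dual Banach space and then to feed the operator $T$ into the sharpened extension results collected in \REM{vsc}. (First note that $F_{sc}$ is well defined: an intersection of linear subspaces of $F^{**}$ that contain $F$ and are sequentially closed in the weak* topology is again such a subspace, and $F^{**}$ itself is one, so a smallest one exists.)

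First I would let $\kappa\dd F \to F^{**}$ be the canonical isometric embedding. By the very definition of $F_{sc}$ we have $\kappa(F) \subset F_{sc}$, so $\kappa \circ T\dd V \to F_{sc}$ is a continuous linear operator, and $\|\kappa \circ T\| = \|T\|$ because $\kappa$ is isometric. Next I would observe that $F_{sc}$ is, by construction, a linear subspace of the dual Banach space $F^{**} = (F^*)^*$ that is sequentially closed in the weak* topology of $F^{**}$; hence, by the second assertion of \REM{vsc}, $F_{sc}$ is $1$-vsc.

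Having identified $F_{sc}$ as a $1$-vsc space, I would apply the first assertion of \REM{vsc} (which refines \PRO{vsc}) with $\lambda = 1$ to the operator $\kappa \circ T\dd V \to F_{sc}$. This produces a continuous linear extension $\bar{T}\dd \mmM(V) \to F_{sc}$ of $\kappa \circ T$ satisfying $\|\bar{T}\| \leqsl \lambda^2 \|\kappa \circ T\| = \|T\|$. Since $\bar{T}$ restricts to $\kappa \circ T$ on $V$ and $\kappa$ is isometric, the reverse inequality $\|\bar{T}\| \geqsl \|\kappa \circ T\| = \|T\|$ holds automatically, so $\|\bar{T}\| = \|T\|$. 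Identifying $F$ with $\kappa(F) \subset F_{sc}$, this $\bar{T}$ is exactly the desired norm-preserving extension.

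I expect no serious obstacle to remain once the statement is set up this way. The real content—that the extension supplied by \THM{1} keeps its values inside $F_{sc}$ instead of spilling into the larger space $F^{**}$—has already been packaged into \PRO{vsc} and \REM{vsc}, where it ultimately rests on the transfinite construction in the proof of \THM{1'} together with the weak*-sequential closedness of $F_{sc}$. The only points left to check are the routine facts that $\kappa$ is isometric and that $\kappa(F) \subset F_{sc}$, both immediate from the definitions.
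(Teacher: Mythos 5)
Your proposal is correct and follows essentially the same route as the paper, which derives this corollary precisely by combining \PRO{vsc} with the two assertions of \REM{vsc}: you realise $F_{sc}$ as a weak*-sequentially-closed subspace of the dual space $F^{**}$, hence $1$-vsc, and then invoke the $\lambda^2$-bound with $\lambda=1$ together with the trivial reverse inequality for extensions. The details you fill in (well-definedness of $F_{sc}$, isometry of the canonical embedding) are exactly the routine ones the paper leaves implicit.
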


\begin{exm}{1vsc}
Let $V$ be a linear subspace of $C(X,F)$ where $F$ is a reflexive Banach space. Then $\mmM(V)$ is
a $1$-vsc (in particular, $M(X,F)$ is a $1$-vsc). Indeed, $\ell_{\infty}(X,F)$ is the dual Banach
space of
\begin{equation*}
\ell_1(X,F^*) \df \Bigl\{u\dd X \to F^*|\quad (\|u\| \df) \sum_{x \in X} \|u(x)\| < \infty\Bigr\}
\end{equation*}
and a sequence of elements of $\ell_{\infty}(X,F)$ converges in the weak* topology iff it is
uniformly bounded and converges (to the same limit) pointwise in the weak topology of $F$ (because
$F$ is reflexive). We conclude that $\mmM(V)$ is sequentially closed in the weak* topology
of $\ell_{\infty}(X,F)$. So, the assertion follows from \REM{vsc}.\par
The same argument proves that $M_{\Mm}(Z,E)$ is $1$-vsc provided $E$ is reflexive and $\Mm$ is
a $\sigma$-algebra on $Z$.
\end{exm}

In the last section we shall prove a counterpart of \THM{3} for vsc Banach spaces $F$ which contain
no isomorphic copy of $\ell_{\infty}^{\RRR}$ (see \THM{vsc}). It seems to be interesting and helpful
to know more about vsc Banach spaces. This will be the subject of our further studies.

\section{Strong results on vector integrals}

As we mentioned in the introductory part, taking into account the Riesz characterisation theorem,
continuous linear operators from $C(X,E)$ into arbitrary Banach spaces may be called (abstract)
\textit{vector integrals}. Such a terminology may be justified, for example, by a theorem formulated
below.

\begin{thm}[Theorem~9 in \S5 of Chapter~III in \cite{din}]{din}
For every continuous linear operator $T\dd C(X,E) \to F$ and a closed linear norming subspace $Z$
of $F^*$, there exists a finitely additive set function $m\dd \Bb(X) \to \llL(E,Z^*)$ such that
\begin{equation}\label{eqn:int}
Tf = \int_X f \dint{\mu} \qquad (f \in C(X,E)).
\end{equation}
\end{thm}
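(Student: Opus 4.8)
The plan is to realise $T$ as an operator into a dual Banach space and then invoke \COR{C-M}. Since $Z$ is a closed norming subspace of $F^*$, the evaluation map $\jmath\dd F \to Z^*$, $(\jmath x)(\psi) \df \psi(x)$, is an isomorphic (and isometric, if $Z$ is $1$-norming) embedding, and $Z^*$ is a dual Banach space. Regarding $T$ as a continuous linear operator $\jmath \circ T\dd C(X,E) \to Z^*$ and applying \COR{C-M} with $Z$ in the role of $F$ (so that $Z^*$ plays the role of $F^*$), we obtain a continuous linear extension $\bar{T}\dd M(X,E) \to Z^*$ with $\|\bar{T}\| = \|T\|$ satisfying condition \textup{(BC*)}.

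Next I would build the set function on the $\sigma$-algebra $\Mm(X)$ of \DEF{measurable}, where everything is canonical. For $A \in \Mm(X)$ and $x \in E$ the function $j_A(\cdot)x$ has finite range $\{0,x\}$ and is weakly $\Mm(X)$-measurable, hence lies in $M(X,E)$; so I set $m(A)x \df \bar{T}(j_A(\cdot)x) \in Z^*$. Linearity of $\bar{T}$ in its argument gives that $m(A)\dd E \to Z^*$ is linear, while $\|m(A)x\| \leqsl \|\bar{T}\|\,\|j_A(\cdot)x\|_\infty \leqsl \|T\|\,\|x\|$ shows $m(A) \in \llL(E,Z^*)$ with $\|m(A)\| \leqsl \|T\|$; for disjoint $A,B$ one has $j_{A\cup B} = j_A + j_B$, whence finite additivity of $m$. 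For the integral I would use the standard meaning: on an $\Mm(X)$-simple function $s = \sum_{i} j_{A_i}(\cdot)x_i$ one has $\int_X s \dint{m} = \sum_i m(A_i)x_i = \bar{T}(s)$, and any $f \in C(X,E)$ is a uniform limit of such $s$ (its range is compact metrisable and $f$ is $\Mm(X)$-measurable, so finite Borel partitions of $f(X)$ of small diameter pull back into $\Mm(X)$). Continuity of $\bar{T}$ then forces $\int_X f \dint{m} = \lim_k \bar{T}(s_k) = \bar{T}(f) = Tf$, which is \eqref{eqn:int}.

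The one genuine obstacle is that the statement asks for $m$ on all of $\Bb(X)$, whereas the construction above lives on $\Mm(X)$, and these differ when $X$ fails to be perfectly normal. Since only finite additivity is demanded, I would extend $m$ to $\Bb(X)$ by descending to scalars: for fixed $x \in E$ and $\psi \in Z$ the functional $g \mapsto \psi\bigl(T(g(\cdot)x)\bigr)$ on $C(X,\KKK)$ is continuous, so by the classical regular Riesz theorem it is represented by a unique regular Borel measure $\lambda_{x,\psi}$ on $\Bb(X)$ with $|\lambda_{x,\psi}|(X) \leqsl \|T\|\,\|x\|\,\|\psi\|$, and $\lambda_{x,\psi}\bigr|_{\Mm(X)}$ agrees with $A \mapsto (m(A)x)(\psi)$ by uniqueness. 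Setting $(m(A)x)(\psi) \df \lambda_{x,\psi}(A)$ for all $A \in \Bb(X)$ then extends $m$: the variation bound shows $m(A)x \in Z^*$ with $\|m(A)x\| \leqsl \|T\|\,\|x\|$, uniqueness in the Riesz theorem gives linearity of $\lambda_{x,\psi}$ in $x$ and in $\psi$, so $m(A) \in \llL(E,Z^*)$, and finite additivity descends from countable additivity of each $\lambda_{x,\psi}$. Checking that these patches remain mutually consistent and that \eqref{eqn:int} survives the extension (it does, since the two set functions coincide on $\Mm(X)$, which already carries every continuous $f$) is the place where care is needed; the remaining verifications are routine.
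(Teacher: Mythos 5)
The paper offers no proof of this statement: it is quoted as Theorem~9 in \S5 of Chapter~III of \cite{din}, and the reader is explicitly referred to that book both for the proof and for the definition of the integral in \eqref{eqn:int}. Your argument is therefore a genuinely different route --- you rederive Dinculeanu's theorem from the paper's own extension machinery. This is legitimate and not circular, since \THM{1} and \COR{C-M} rest only on Hahn--Banach, the scalar Riesz theorem and dominated convergence, never on the quoted result. Where Dinculeanu builds the representing set function by hand via scalarisation, you obtain it for free by evaluating the extension $\bar T$ at the functions $j_A(\cdot)x$, and \eqref{eqn:int} drops out of the continuity of $\bar T$ together with uniform approximation of continuous functions by $\Mm(X)$-simple ones. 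The embedding $\jmath\dd F\to Z^*$ and the identification of $Tf$ with $\jmath(Tf)$ are exactly the intended reading of the statement.

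Two steps need more than the word ``routine''. First, the consistency of your two definitions of $m$ on $\Mm(X)$: before uniqueness of Baire-measure representations can identify $\lambda_{x,\psi}\bigr|_{\Mm(X)}$ with $A\mapsto\bigl(\bar T(j_A(\cdot)x)\bigr)(\psi)$, you must check that the latter is a countably additive measure on $\Mm(X)$ (this follows from (BC*) applied to the partial sums $\sum_{k\leqsl n}j_{A_k}(\cdot)x$, which converge pointwise in norm to $j_{\bigcup_k A_k}(\cdot)x$) and that it integrates every $g\in C(X,\KKK)$ to $\psi\bigl(T(g(\cdot)x)\bigr)$. Second, for $\int_X f\dint{m}$ to be the integral of \cite{din}, the extended $m$ must have finite semivariation on all of $\Bb(X)$, not merely on $\Mm(X)$; this is true but needs an argument: for fixed $\psi$, disjoint $A_1,\dots,A_N\in\Bb(X)$ and unit vectors $x_i$, use inner regularity of the finitely many scalar measures $\lambda_{x_i,\psi}$ to replace the $A_i$ by pairwise disjoint compact $\ggG_{\delta}$-sets, on which the bound $\|\bar T\|\cdot\|\psi\|$ is available because they lie in $\Mm(X)$. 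With these two points filled in, the proof is correct.
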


For a proof and the definition of the integral appearing in \eqref{eqn:int}, consult \cite{din}.
Other results in this fashion may be found, for example, in \cite{go1,go2} and \cite{lew}.\par
The reader should notice that, under the notation of \THM{din}, $Z^*$ differs from $F$, unless $F$
is a dual Banach space. \THM{3} shows that in the case when $F$ is wsc, the set function $\mu$ may
always be taken so that it takes values in $\llL(E,F)$. (More generally, it suffices that $F$ is vsc
and contains no isomorphic copy of $\ell_{\infty}^{\RRR}$; see \THM{vsc} in the last section.)
$L^1([0,1])$ is an example of a wsc Banach space which is isomorphic to no dual Banach space.
To formulate our first result on vector measures, we recall

\begin{dfn}{op-vec}
Whenever $\Mm$ is a $\sigma$-algebra of subsets of some set, a set function $\mu\dd \Mm \to
\llL(E,F)$ is said to be an \textit{operator measure} if for any $x \in E$ and $\psi \in F^*$,
the set function $\Mm \ni A \mapsto \psi(\mu(A) x) \in \KKK$ is a scalar-valued measure. According
to the Orlicz-Pettis theorem (see, for example, Corollary~4 on page~22 in \cite{d-u}), if $\mu$ is
an operator-valued measure and $A_n \in \Mm$ are pairwise disjoint, then $\mu(\bigcup_{n=1}^{\infty}
A_n) x = \sum_{n=1}^{\infty} \mu(A_n) x$ (the convergence in the norm topology) for each $x \in
E$.\par
Similarly, a set function $\mu\dd \Mm \to F$ is said to be a \textit{vector measure} if for any
$\psi \in F^*$, the set function $\Mm \ni A \mapsto \psi(\mu(A)) \in \KKK$ is a scalar-valued
measure. Equivalently, $\mu$ is a vector measure iff $\mu(\bigcup_{n=1}^{\infty} A_n) =
\sum_{n=1}^{\infty} \mu(A_n)$ (the convergence in the norm topology) for any sequence of pairwise
disjoint sets $A_n \in \Mm$.\par
Finally, a set function $\mu\dd \Mm \to F^*$ is said to be a \textit{weak* vector measure}
if the set function $\Mm \ni A \mapsto (\mu(A))(f) \in \KKK$ is a (scalar-valued countably additive)
measure for any $f \in F$.\par
It is worth emphasizing here that a set function $\mu\dd \Mm \to \llL(E,F)$ is an operator measure
provided it is a vector measure, but the reverse implication may fail to hold.
\end{dfn}

The reader is referred to \DEF{i-m} (in the introductory section) to recall the notion
of an i-measure. The next result shows that every such a set function is a vector measure.

\begin{lem}{op}
A series $\sum_{n=1}^{\infty} T_n$ with summands in $\llL(E,F)$ is convergent in the norm topology
of $\llL(E,F)$ provided it is independently convergent. In particular, every i-measure is a vector
measure.
\end{lem}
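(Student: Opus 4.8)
The plan is to show that independent convergence of $\sum_{n=1}^{\infty} T_n$ forces the operator norms $\|T_n\|$ to tend to zero and, more to the point, the tails $\sup\{\|\sum_{n=N}^{M} T_n\|\dd M \geqsl N\}$ to vanish as $N \to \infty$; this Cauchy criterion in $\llL(E,F)$ yields norm convergence. The key observation is that for each fixed $N$ and $M$, the operator norm $\|\sum_{n=N}^{M} T_n x\|$ is realised by a single unit vector $x$, whereas independent convergence controls sums $\sum_{n} T_n x_n$ over \emph{sequences} of unit vectors; the trick is to feed the ``worst'' unit vector into every slot of the relevant block.

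First I would argue by contradiction: suppose $\sum T_n$ is not norm-Cauchy. Then there is an $\epsi > 0$ and a strictly increasing sequence of indices $p_1 < q_1 \leqsl p_2 < q_2 \leqsl \ldots$ such that $\bigl\|\sum_{n=p_k}^{q_k} T_n\bigr\| > \epsi$ for every $k$. For each block $[p_k,q_k]$ choose, by the definition of the operator norm, a unit vector $x^{(k)} \in E$ with $\bigl\|\sum_{n=p_k}^{q_k} T_n x^{(k)}\bigr\| > \epsi$. Now define a single bounded sequence $(x_n)_{n=1}^{\infty}$ in $E$ by setting $x_n \df x^{(k)}$ for $p_k \leqsl n \leqsl q_k$ and $x_n \df 0$ otherwise; since each $\|x^{(k)}\| = 1$, this sequence satisfies $\|x_n\| \leqsl 1$. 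By the assumed independent convergence, the series $\sum_{n=1}^{\infty} T_n x_n$ converges in the norm topology of $F$, hence is norm-Cauchy; but the partial sums over the disjoint blocks $[p_k,q_k]$ are exactly $\sum_{n=p_k}^{q_k} T_n x^{(k)}$, each of norm exceeding $\epsi$, contradicting the Cauchy property. Therefore $\sum T_n$ is norm-Cauchy, and by completeness of $\llL(E,F)$ it converges in the operator norm.

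The main obstacle, and the place where care is needed, is precisely the passage from block-wise norming vectors to a \emph{single} admissible bounded sequence: the definition of independent convergence quantifies over one sequence $(x_n)$ at a time, so one must interleave the separately chosen unit vectors into one sequence (padding the gaps between blocks with zeros) and then recognise the block sums as genuine consecutive partial-sum differences of $\sum T_n x_n$. Once norm convergence of $\sum T_n$ is established, the final assertion is immediate: if $\mu$ is an i-measure and $A_n \in \Mm$ are pairwise disjoint, then $\sum_{n} \mu(A_n)$ is independently convergent by definition, hence norm convergent in $\llL(E,F)$; evaluating the norm limit at any $x \in E$ and using $\mu(\bigcup_n A_n)x = \sum_n \mu(A_n)x$ (which holds by the defining property of an i-measure) shows that $\mu$ is countably additive in the norm topology of $\llL(E,F)$, i.e.\ a vector measure in the sense of \DEF{op-vec}.
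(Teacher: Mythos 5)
Your proof is correct and rests on the same key idea as the paper's — the Schur-lemma/gliding-hump trick of interleaving blockwise chosen unit vectors into one bounded sequence and contradicting the convergence of $\sum_{n} T_n x_n$; you simply negate the Cauchy criterion for the partial sums directly, whereas the paper first introduces the tail operators $S_n x = \sum_{k\geqsl n} T_k x$ (bounded via the Uniform Boundedness Principle) and shows $\|S_n\|\to 0$, an equivalent but slightly longer route. One small point to tidy: choose the blocks with $q_k < p_{k+1}$ (strictly) so they are pairwise disjoint and the interleaved sequence $(x_n)$ is well defined — this costs nothing, since the failure of the Cauchy criterion supplies such blocks arbitrarily far out.
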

\begin{proof}
By the assumptions, for each $n > 0$, the formula $S_n x \df \sum_{k=n}^{\infty} T_k x$ correctly
defines a linear operator $S_n\dd E \to F$. It follows from the Uniform Boundedness Principle that
$S_n \in \llL(E,F)$. It remains to check that $\lim_{n\to\infty} \|S_n\| = 0$. We assume,
on the contrary, that $\|S_n\| > \epsi$ for some $\epsi > 0$ and infinitely many $n$. We shall mimic
the proof of Schur's lemma (on weakly convergent sequences in $\ell_1$). Let $\nu_1$ and $x_1 \in E$
be, respectively, a positive integer and a unit vector such that $\|S_{\nu_1} x_1\| > \epsi$.
It follows from our hypothesis that there is $\nu_2 > \nu_1$ such that $\|\sum_{k=\nu_1}^{\nu_2-1}
T_k x_1\| > \epsi$ and $\|S_{\nu_2}\| > \epsi$. We continue this procedure: if $\nu_1 < \ldots <
\nu_m$ are integers (where $m > 1$) and $x_1,\ldots,x_{m-1}$ are unit vectors of $E$ such that
$\|S_{\nu_m}\| > \epsi$ and
\begin{equation}\label{eqn:x}
\Bigl\|\sum_{k=\nu_j}^{\nu_{j+1}-1} T_k x_j\Bigr\| > \epsi
\end{equation}
for each $j \in \{1,\ldots,m-1\}$, we may find an integer $\nu_{m+1} > \nu_m$ and a unit vector $x_m
\in E$ for which $\|S_{\nu_{m+1}}\| > \epsi$ and \eqref{eqn:x} holds for $j = m$. In this way
we obtain a bounded sequence of vectors $x_n$ and an increasing sequence of integers $\nu_n$ such
that \eqref{eqn:x} holds for each $j$. But, if follows from the assumptions of the lemma that
the series $\sum_{n=1}^{\infty} (\sum_{k=\nu_n}^{\nu_{n+1}-1} T_k x_n)$ converges in the norm
topology, which contradicts \eqref{eqn:x}.\par
The additional claim of the lemma simply follows.
\end{proof}

Another strong property of i-measures is established below.

\begin{thm}{fin}
Every i-measure has finite total semivariation.
\end{thm}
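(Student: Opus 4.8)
The plan is to argue by contradiction, assuming that some i-measure $\mu\dd \Mm \to \llL(E,F)$ has infinite total semivariation, and to extract from this failure a bounded family of vectors and a sequence of pairwise disjoint sets witnessing a badly divergent series, thereby contradicting independent convergence. The definition \eqref{eqn:semi} of $\|\mu\|_Z$ as a supremum over finite collections means that $\|\mu\|_Z = \infty$ gives, for every natural number $m$, a finite family of pairwise disjoint sets $A^{(m)}_1,\ldots,A^{(m)}_{N_m} \in \Mm$ and unit vectors $x^{(m)}_1,\ldots,x^{(m)}_{N_m} \in E$ with $\bigl\|\sum_{k=1}^{N_m} \mu(A^{(m)}_k) x^{(m)}_k\bigr\| > m$. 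The goal is to splice these finite blocks together into a single bounded sequence of vectors indexed by a single sequence of pairwise disjoint sets.

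The main obstacle is disjointness across blocks: the sets $A^{(m)}_k$ drawn for different $m$ need not be disjoint from one another, so one cannot naively concatenate the blocks and feed them to the independent-convergence hypothesis of \DEF{i-m}. First I would try to reduce to disjoint blocks by a gliding-hump / pairwise-disjointification argument. The natural tool here is that $\mu$ is a genuine (norm-countably-additive) vector measure, which we already know from \LEM{op}; countable additivity lets one control the ``tail'' contribution of a block outside a large finite union of previously chosen sets. Concretely, I would build the disjoint sequence inductively: having committed to disjoint sets with total union $D$, choose the next block $(A^{(m)}_k)_k$ with $\|\sum \mu(A^{(m)}_k)x^{(m)}_k\| > m$ large, then replace each $A^{(m)}_k$ by $A^{(m)}_k \setminus D$; by choosing $m$ large enough relative to the (finitely many) previously used pieces, and using that $\mu(\,\cdot\setminus D) = \mu(\,\cdot\,) - \mu(\,\cdot\,\cap D)$ together with additivity, arrange that removing $D$ perturbs the block sum by a bounded amount while its norm still exceeds, say, $1$. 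One then relabels all the resulting disjoint pieces into a single sequence $B_1, B_2, \ldots \in \Mm$ of pairwise disjoint sets together with unit vectors $y_1, y_2, \ldots$.

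With a single sequence of pairwise disjoint $B_n \in \Mm$ and unit vectors $y_n$ in hand, the contradiction is almost immediate: the series $\sum_{n=1}^{\infty} \mu(B_n)$ is independently convergent by the defining property of an i-measure, so by \DEF{i-m} the series $\sum_{n=1}^{\infty} \mu(B_n) y_n$ converges in the norm topology of $F$, forcing $\|\mu(B_n) y_n\| \to 0$ and, more to the point, forcing the partial sums over each consecutive block to tend to $0$. But by construction each block sum has norm bounded below by a fixed positive constant, which is the sought contradiction. I expect the genuinely delicate step to be the disjointification: making precise, via countable additivity of $\mu$, that the passage from $A^{(m)}_k$ to $A^{(m)}_k \setminus D$ costs only a controllable amount in the block sum while keeping the vectors of norm at most $1$. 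The argument in spirit mirrors the Schur-type gliding hump already used in the proof of \LEM{op}, and I would model the bookkeeping on that proof.
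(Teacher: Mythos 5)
Your overall strategy (extract infinitely many pairwise disjoint blocks, each with block sum bounded below, and contradict the convergence of $\sum_n\mu(B_n)y_n$ guaranteed by \DEF{i-m}) is the same engine the paper uses in part of its proof, and your final step is fine. The gap is exactly where you suspect it: the disjointification. Having committed to disjoint pieces with union $D$ and chosen a new block $(A_k,x_k)_{k=1}^N$ with $\bigl\|\sum_{k=1}^N\mu(A_k)x_k\bigr\|>m$, the perturbation incurred by replacing $A_k$ with $A_k\setminus D$ is $\sum_{k=1}^N\mu(A_k\cap D)x_k$, and the only available bound for its norm is $\|\mu\|_D$ --- an instance of the very quantity the theorem is about. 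It is \emph{not} controlled by ``the finitely many previously used pieces'': the sets $A_k\cap D$ are new, their number $N$ is unbounded, and neither countable additivity of $\mu$ nor the boundedness of the range of a vector measure (which only gives $\|\mu(A_k\cap D)x_k\|\leqsl\sup_B\|\mu(B)\|$ termwise, hence a useless bound of order $N$) tames the sum. So choosing $m$ large does not help unless you already know $\|\mu\|_D<\infty$, which is circular.

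This is not a repairable bookkeeping issue but a genuine obstruction, and it is why the paper's proof of \THM{fin} is a three-case analysis rather than a single gliding hump. The scenario your argument cannot handle is the one the paper isolates as (C2)+(C4): a set $V$ with $\|\mu\|_V=\infty$ such that no two disjoint measurable subsets both have infinite semivariation and every measurable subset has semivariation $0$ or $\infty$. In that situation one cannot manufacture even two disjoint blocks each with large sum, so no version of your construction can start; the paper disposes of it by a completely different argument, showing that there $\|\mu\|_V$ collapses to $\sup\{\|\mu(A)\|\dd A\subset V\}$, which is finite because the range of a vector measure is bounded (here \LEM{op} is what guarantees $\mu$ is a genuine vector measure). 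The cases where a gliding hump does work are precisely those where one can first produce infinitely many pairwise disjoint ``containers'' $B_n$ with $\|\mu\|_{B_n}=\infty$ (case (C1)) or with $\|\mu\|_{B_n}>\epsi$ (case (C3)), and then draw each block from its own container so that disjointness across blocks is automatic and no subtraction of $D$ is ever needed. To fix your proof you would need to supply the case analysis that either produces such containers or reduces to the indecomposable situation and kills it separately.
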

\begin{proof}
Let $\mu\dd \Mm \to \llL(E,F)$ be an i-measure defined on a $\sigma$-algebra $\Mm$ of subsets
of a set $Z$. Suppose, on the contrary, that $\|\mu\|_Z = \infty$. For an arbitrary set $A \in \Mm$
we may similarly define $\|\mu\|_A \in [0,\infty]$ as the supremum of all numbers of the form
$\|\sum_{n=1}^N \mu(A_n) x_n\|$ where $N$ is finite, $A_n \in \Mm$ are pairwise disjoint subsets
of $A$ and $x_n \in E$ have norms not exceeding $1$ (compare \eqref{eqn:semi}). The set function
$\Mm \ni A \mapsto \|\mu\|_A \in [0,\infty]$ is called the \textit{semivariation} of $\mu$ (see \S4
of Chapter~I in \cite{din}) and known to have the following (simple) properties:
\begin{enumerate}[(SM1)]
\item $\|\mu\|_A \leqsl \|\mu\|_B$ provided $A, B \in \Mm$ are such that $A \subset B$;
\item $\|\mu\|_{\bigcup_{n=1}^{\infty} A_n} \leqsl \sum_{n=1}^{\infty} \|\mu\|_{A_n}$ for any
 collection of sets $A_n \in \Mm$.
\end{enumerate}
We divide the proof into a few separate cases.\par
First assume that
\begin{itemize}
\item[(C1)] every set $B \in \Mm$ with $\|\mu\|_B = \infty$ may be written in the form $B = B_1
 \cup B_2$ where $B_1, B_2 \in \Mm$ are pairwise disjoint and $\|\mu\|_{B_1} = \|\mu\|_{B_2} =
 \infty$.
\end{itemize}
Using (C1) and the induction argument, we easily find an infinite sequence of pairwise disjoint
sets $B_n \in \Mm$ for which $\|\mu\|_{B_n} = \infty$. So, it follows from the definition
of the semivariation that for each $n$ we may find finite systems $z_1^{(n)},\ldots,z_{N_n}^{(n)}
\in E$ of vectors whose norms are not greater than $1$ and disjoint sets $C_1^{(n)},\ldots,
C_{N_n}^{(n)} \in \Mm$ contained in $B_n$ such that
\begin{equation}\label{eqn:c1}
\Bigl\|\sum_{k=1}^{N_n} \mu(C_k^{(n)}) z_k^{(n)}\Bigr\| > 1.
\end{equation}
Now it suffices to arrange all sets $C_j^{(n)}$ in a sequence $A_1,A_2,\ldots$ and the vectors
$z_j^{(n)}$ in a corresponding sequence $x_1,x_2,\ldots$. Since the sets $A_n$ are pairwise
disjoint, we conclude from the definition of an i-measure that the series
\begin{equation}\label{eqn:series}
\sum_{n=1}^{\infty} \mu(A_n) x_n
\end{equation}
is unconditionally convergent (in the norm topology), which is contradictory to \eqref{eqn:c1}.
Thus, in that case the proof is complete.\par
Now we assume that there is a set $W \in \Mm$ with $\|\mu\|_W = \infty$ such that whenever $W = A
\cup B$ and $A, B \in \Mm$ are pairwise disjoint, then $\|\mu\|_A < \infty$ or $\|\mu\|_B < \infty$.
We then conclude from (SM1) that
\begin{itemize}
\item[(C2)] if $A, B \in \Mm$ are two disjoint subsets of $W$ and $\|\mu\|_A = \infty$, then
 $\|\mu\|_B < \infty$.
\end{itemize}
This case is divided into two subcases. First we additionally assume that there are a subset $V \in
\Mm$ of $W$ with $\|\mu\|_V = \infty$ and a number $\epsi > 0$ such that
\begin{itemize}
\item[(C3)] if $D \in \Mm$ is a subset of $V$ with $\|\mu\|_D = \infty$, then there is a set $B \in
 \Mm$ contained in $D$ for which $\epsi < \|\mu\|_B < \infty$.
\end{itemize}
Using (C3) for $D = V$, we may find a set $B_1 \in \Mm$ contained in $V$ such that $\epsi <
\|\mu\|_{B_1} < \infty$. We infer from (SM2) that $\|\mu\|_V \leqsl \|\mu\|_{V \setminus B_1}
+ \|\mu\|_{B_1}$ and hence $\|\mu\|_{V_1} = \infty$ for $V_1 \df V \setminus B_1$. Repeating this
reasoning for $D = V_1$, we may find a set $B_2 \in \Mm$ contained in $V_1$ for which $\epsi <
\|\mu\|_{B_2} < \infty$. Then $\|\mu\|_{V_2} = \infty$ for $V_2 \df V_1 \setminus B_2$. Continuing
this procedure, we obtain a sequence of pairwise disjoint sets $B_n \in \Mm$ such that
$\|\mu\|_{B_n} > \epsi$. Now repeating the reasoning from the previous case, we see that for
each $n$ there are finite systems $z_1^{(n)},\ldots,z_{N_n}^{(n)} \in E$ of vectors whose norms are
not greater than $1$ and $C_1^{(n)},\ldots,C_{N_n}^{(n)} \in \Mm$ of pairwise disjoint subsets
of $B_n$ such that $\|\sum_{k=1}^{N_n} \mu(C_k^{(n)}) z_k^{(n)}\| > \epsi$. As shown before,
this leads us to a contradiction with the fact that some series of the form \eqref{eqn:series} is
unconditionally convergent.\par
Finally, we add to (C2) the negation of (C3):
\begin{itemize}
\item[(C4)] whenever $V \in \Mm$ is a subset of $W$ with $\|\mu\|_V = \infty$ and $\epsi$ is
 a positive real number, then there exists a set $D = D(V,\epsi) \in \Mm$ contained in $V$ such that
 $\|\mu\|_D = \infty$ and every subset $B \in \Mm$ of $D$ with $\|\mu\|_B < \infty$ satisfies
 $\|\mu\|_B \leqsl \epsi$.
\end{itemize}
We now define by a recursive formula sets $V_n \in \Mm$: $V_0 \df D(W,1)$ and $V_n \df
D(V_{n-1},2^{-n})$ for $n > 0$. Put $V \df \bigcap_{n=0}^{\infty} V_n$ and, for $n > 0$, $L_n \df
V_{n-1} \setminus V_n$. Since the sets $V_n$ decrease, we see that
\begin{equation}\label{eqn:V0}
V_0 = V \cup \bigcup_{n=1}^{\infty} L_n.
\end{equation}
Further, it follows from (C2) that $\|\mu\|_{L_n} < \infty$ (because $\|\mu\|_{V_n} = \infty$ and
$L_n \cap V_n = \varempty$) and hence, thanks to the definition of $V_{n-1}$ (see (C4)),
$\|\mu\|_{L_n} \leqsl 2^{-n}$. So, (SM2) applied to \eqref{eqn:V0} gives
\begin{equation}\label{eqn:V}
\|\mu\|_V = \infty.
\end{equation}
Moreover, since $V \subset V_n$ for each $n$, we deduce from the property of $V_n$ specified in (C4)
that
\begin{itemize}
\item[($*$)] for every subset $B \in \Mm$ of $V$, $\|\mu\|_B \in \{0,\infty\}$.
\end{itemize}
We now fix a finite collection $A_1,\ldots,A_N \in \Mm$ of pairwise disjoint subsets of $V$ and
a corresponding system $x_1,\ldots,x_N \in E$ of vectors whose norms do not exceed $1$. We infer
from (C2) and ($*$) that there is an index $k \in \{1,\ldots,N\}$ such that $\|\mu\|_{A_j} = 0$ for
any $j \neq k$. Noticing that $\|\mu(A_j) x_j\| \leqsl \|\mu\|_{A_j}$, we get $\sum_{j=1}^N \mu(A_j)
x_j = \mu(A_k) x_k$. Consequently,
\begin{align*}
\|\mu\|_V &= \sup\{\|\mu(A) x\|\dd\ A \in \Mm,\ A \subset V,\ x \in E,\ \|x\| \leqsl 1\}\\
&= \sup\{\|\mu(A)\|\dd\ A \in \Mm,\ A \subset V\}.
\end{align*}
Since $\mu$ is a vector measure (by \LEM{op}), its range is a bounded set in $\llL(E,F)$ (see, for
example, Corollary~19 on page~9 in \cite{din}; a stronger property of countably additive vector
measures is the content of the Bartle-Dunford-Schwartz theorem, see Corollary~7 on page~14
in \cite{din}), and therefore the above formula contradicts \eqref{eqn:V}, which finishes the whole
proof.
\end{proof}

As a consequence of \THM{fin}, we obtain a generalisation of the Bartle-Dunford-Schwartz theorem
on the absolute continuity of vector measures with respect to some finite nonnegative measures (see,
for example, Corollary~6 on page~14 in \cite{d-u}). Below we continue the notation introduced
in the above proof.

\begin{cor}{abs}
If $\mu\dd \Mm \to \llL(E,F)$ is an i-measure, then there exists a measure $\lambda\dd \Mm \to
[0,\infty)$ such that the following condition is satisfied.
\begin{itemize}
\item[(ac)] For every $\epsi > 0$ there is $\delta(\epsi) > 0$ such that $\|\mu\|_A \leqsl \epsi$
 whenever $A \in \Mm$ satisfies $\lambda(A) \leqsl \delta(\epsi)$.
\end{itemize}
What is more, the measure $\lambda$ may be taken so that for each $A \in
\Mm$,
\begin{equation}\label{eqn:mut-abs}
0 \leqsl \lambda(A) \leqsl \|\mu\|_A.
\end{equation}
\end{cor}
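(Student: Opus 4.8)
The plan is to reduce everything to the classical theory of control measures and to use the defining property of i-measures -- independent convergence -- only once, to turn ordinary countable additivity into a uniform statement about the total semivariation. Throughout I regard $\mu$ as a vector measure with values in the Banach space $\llL(E,F)$, which is legitimate by \LEM{op}; by \THM{fin} its total semivariation $\|\mu\|_Z$ is finite. Alongside $\|\mu\|_A$ I consider the smaller quantity $s(A)\df\sup\bigl\{\bigl\|\sum_{n=1}^N c_n\mu(A_n)\bigr\|\dd\ N<\infty,\ A_n\in\Mm\text{ disjoint},\ A_n\subset A,\ |c_n|\leqsl1\bigr\}$, i.e. the ordinary semivariation of the $\llL(E,F)$-valued measure $\mu$; since scalar coefficients are a special case of unit vectors $x_n\in E$, one has $s(A)\leqsl\|\mu\|_A$ straight from the definitions.

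First I would produce the measure $\lambda$. Applying the classical control-measure theorem (the Bartle--Dunford--Schwartz theorem in Rybakov's form; cf. Corollary 6 on page 14 in \cite{d-u}) to the countably additive $\llL(E,F)$-valued measure $\mu$, I obtain a functional $\psi_0\in\llL(E,F)^*$ with $\|\psi_0\|\leqsl1$ such that $\lambda\df|\psi_0\circ\mu|$ is a finite nonnegative control measure for $s$, meaning $\lim_{\lambda(A)\to0}s(A)=0$. Since $\|\psi_0\|\leqsl1$, one has $\lambda(A)\leqsl s(A)\leqsl\|\mu\|_A$, which is exactly \eqref{eqn:mut-abs}. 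Moreover $\lambda$ carries the null sets of $\|\mu\|$: if $\lambda(A)=0$ then $s(A)=0$, hence $\mu(B)=0$ for every $B\in\Mm$ with $B\subset A$, and therefore $\|\mu\|_A=0$ as well.

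The heart of the matter -- and the step where the i-measure hypothesis is essential -- is the following uniform countable additivity of the total semivariation:
\begin{equation*}
\|\mu\|_{\bigcup_{n\geqsl N}A_n}\longrightarrow0\quad(N\to\infty)\qquad\text{for every sequence of pairwise disjoint }A_n\in\Mm.
\end{equation*}
I would prove this by contradiction, in the spirit of \LEM{op} and \THM{fin}. If the left-hand side stayed above some $\epsi>0$ (it is non-increasing in $N$ by property (SM1) from the proof of \THM{fin}), then, using that $\mu$ is norm countably additive to truncate each witnessing finite sum to finitely many indices, I would build consecutive ``blocks'' of pairwise disjoint sets occupying pairwise disjoint ranges of indices: finite disjoint families whose sets, listed together as a single pairwise disjoint sequence $D_1,D_2,\ldots\in\Mm$ with accompanying unit vectors $z_j\in E$, satisfy $\bigl\|\sum_{j\in\text{block}}\mu(D_j)z_j\bigr\|>\epsi$ for each block. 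But independent convergence forces $\sum_j\mu(D_j)z_j$ to converge in norm, so its partial sums are Cauchy and every block sum tends to $0$ -- a contradiction. This is the main obstacle; note that mere countable additivity of $\mu$ is not enough here, it is precisely independent convergence that is used.

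It remains to promote the two facts above to condition (ac). Suppose (ac) failed for $\lambda$: there would be $\epsi>0$ and sets $A_k\in\Mm$ with $\lambda(A_k)\leqsl2^{-k}$ yet $\|\mu\|_{A_k}>\epsi$. Put $C_k\df\bigcup_{j\geqsl k}A_j$, $C_\infty\df\bigcap_{k}C_k$ and $D_j\df C_j\setminus C_{j+1}$; the $D_j$ are pairwise disjoint and $C_k=C_\infty\cup\bigcup_{j\geqsl k}D_j$. Since $\lambda(C_\infty)\leqsl\lambda(C_k)\leqsl\sum_{j\geqsl k}2^{-j}\to0$, we get $\lambda(C_\infty)=0$, whence $\|\mu\|_{C_\infty}=0$ by the null-set property above. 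Using (SM1) and (SM2) from the proof of \THM{fin},
\begin{equation*}
\epsi<\|\mu\|_{A_k}\leqsl\|\mu\|_{C_k}\leqsl\|\mu\|_{C_\infty}+\|\mu\|_{\bigcup_{j\geqsl k}D_j}=\|\mu\|_{\bigcup_{j\geqsl k}D_j},
\end{equation*}
and the right-hand side tends to $0$ as $k\to\infty$ by the uniform countable additivity just established -- a contradiction. Hence $\lambda$ satisfies (ac), completing the proof.
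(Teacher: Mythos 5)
Your proof is correct, but it is organised rather differently from the paper's. The paper encodes the i-semivariation as the pointwise supremum of the family of $F$-valued vector measures $\mu_{\gamma}(B) \df \sum_{j=1}^N \mu(B \cap A_j) x_j$, indexed by finite systems $\gamma = (A_1,\ldots,A_N;x_1,\ldots,x_N)$, checks via \THM{fin} and the blocking argument from its proof that this family is uniformly bounded and uniformly strongly additive (this is exactly \eqref{eqn:usa}), and then obtains (ac) together with \eqref{eqn:mut-abs} in one stroke from the \emph{uniform} control-measure theorem for uniformly strongly additive families (Corollary~5 on page~13 in \cite{d-u}). You instead invoke only the single-measure control theorem (Rybakov's form of Corollary~6 on page~14 in \cite{d-u}) for $\mu$ viewed, via \LEM{op}, as a norm countably additive $\llL(E,F)$-valued measure; this controls only the scalar semivariation $s(A) \leqsl \|\mu\|_A$, and you bridge the gap to the full i-semivariation by hand: you prove the tail statement $\|\mu\|_{\bigcup_{n \geqsl N} A_n} \to 0$ for pairwise disjoint $A_n$ by a truncate-and-block argument driven by independent convergence (the same trick the paper uses for \eqref{eqn:usa}, applied to tails rather than to single sets, with the truncation justified by countable additivity of $A \mapsto \mu(A)x$), and then convert a failure of (ac) into such a tail via the sets $C_k$, $C_{\infty}$, $D_j$ together with (SM1)--(SM2) and the null-set property $\lambda(A)=0 \Rightarrow \|\mu\|_A = 0$. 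Both routes rest on the same two pillars --- the Diestel--Uhl control-measure machinery and the independent-convergence blocking argument --- but the paper's is shorter because the uniform version of the control theorem performs the uniformisation automatically, whereas yours needs only the more elementary single-measure version at the cost of an extra, but correct, $\epsi$--$\delta$ argument. One cosmetic point: your justification that ``scalar coefficients are a special case of unit vectors'' for $s(A) \leqsl \|\mu\|_A$ is loose as stated; the clean reason is $\bigl\|\sum_n c_n \mu(A_n)\bigr\|_{\llL(E,F)} = \sup_{\|x\|\leqsl 1}\bigl\|\sum_n \mu(A_n)(c_n x)\bigr\| \leqsl \|\mu\|_A$, and similarly $|\psi_0 \circ \mu|(A) \leqsl s(A)$ requires choosing unimodular scalars realising the absolute values --- but both facts are true, so this does not affect the argument.
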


Before giving a proof, we wish to emphasize that the above result is \textit{not} a special case
of the Bartle-Dunford-Schwartz theorem mentioned above, because the semivariation of an i-measure
is, in general, greater than the semivariation of a valued measure, defined in Definition~4
on page~2 in \cite{d-u}.

\begin{proof}
Let $\Gamma$ be the set of all finite systems $\gamma = (A_1,\ldots,A_N;x_1,\ldots,x_N)$ consisting
of pairwise disjoint sets $A_n \in \Mm$ and vectors $x_n \in E$ whose norms are not greater than
$1$. For each such $\gamma$ we define a set function $\mu_{\gamma}\dd \Mm \to F$ by $\mu_{\gamma}(B)
\df \sum_{j=1}^N \mu(B \cap A_j) x_j$ (provided $\gamma = (A_1,\ldots,A_N;x_1,\ldots,x_N)$). It is
easy to see that $\mu_{\gamma}$ is a vector measure. Observe also that
\begin{equation}\label{eqn:svar}
\sup_{\gamma\in\Gamma} \|\mu_{\gamma}(B)\| = \|\mu\|_B \qquad (B \in \Mm).
\end{equation}
The above formula, combined with \THM{fin}, yields that the collection
$\{\mu_{\gamma}\}_{\gamma\in\Gamma}$ is uniformly bounded. Further, let $A_n \in \Mm$ be pairwise
disjoint sets. We claim that
\begin{equation}\label{eqn:usa}
\lim_{n\to\infty} \|\mu\|_{A_n} = 0.
\end{equation}
Because if not, we may and do assume (after passing to a subsequence, if necessary) that
$\|\mu\|_{A_n} > \epsi$ for some positive real number $\epsi$ and all $n$. But this is impossible,
as shown in the proof of \THM{fin} (in the part concerning (C3)). So, \eqref{eqn:usa} holds which,
combined with \eqref{eqn:svar}, means that the collection $\{\mu_{\gamma}\}_{\gamma\in\Gamma}$ is
\textit{uniformly strongly additive} (consult Proposition~17 on page~8 in \cite{d-u}). We now deduce
from Corollary~5 on page~13 in \cite{d-u} that there is a measure $\lambda\dd \Mm \to [0,\infty)$
such that \eqref{eqn:mut-abs} holds and for any $\epsi > 0$ there is $\delta > 0$ for which
$\sup_{\gamma\in\Gamma} \|\mu_{\gamma}(A)\| \leqsl \epsi$ provided $\lambda(A) \leqsl \delta$.
So, a look at \eqref{eqn:svar} finishes the proof.
\end{proof}

Whenever $\mu$ is an i-measure and $\lambda$ is a probabilistic measure, both defined on a common
$\sigma$-algebra, we shall write $\mu \ll \lambda$ if (ac) is fulfilled.\par
As a consequence of \COR{abs}, we obtain the following generalisation of a theorem of Pettis (see
Theorem~1 on page~10 in \cite{d-u}).

\begin{cor}{abs-abs}
For an i-measure $\mu\dd \Mm \to \llL(E,F)$ and a measure $\nu\dd \Mm \to [0,\infty)$, $\mu \ll \nu$
iff $\mu$ vanishes on all sets on which $\nu$ vanishes.
\end{cor}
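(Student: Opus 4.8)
The plan is to prove the two implications separately, treating the forward direction as an immediate consequence of the definition of $\mu \ll \nu$ and reserving the substantive work for the converse, where I would lean on \COR{abs} together with the classical scalar absolute-continuity theorem.

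First I would dispatch the forward implication. Assuming $\mu \ll \nu$, fix $A \in \Mm$ with $\nu(A) = 0$. For each $\epsi > 0$ the inequality $\nu(A) = 0 \leqsl \delta(\epsi)$ triggers (ac) and forces $\|\mu\|_A \leqsl \epsi$; letting $\epsi \to 0$ gives $\|\mu\|_A = 0$. Since $\|\mu(A)\| \leqsl \|\mu\|_A$ (take $N = 1$ in the definition of $\|\mu\|_A$), we get $\mu(A) = 0$, so $\mu$ vanishes on every $\nu$-null set.

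For the converse, assume $\mu$ vanishes on all $\nu$-null sets. The crucial first step is to \emph{upgrade} this vanishing of the operator values to the vanishing of the localized semivariation: if $\nu(A) = 0$, then every $B \in \Mm$ with $B \subset A$ also has $\nu(B) = 0$, hence $\mu(B) = 0$ by hypothesis; since the defining supremum for $\|\mu\|_A$ (compare \eqref{eqn:semi}) ranges only over disjoint subsets of $A$, each term $\mu(A_n) x_n$ vanishes and therefore $\|\mu\|_A = 0$. Next I would invoke \COR{abs} to produce a finite nonnegative measure $\lambda\dd \Mm \to [0,\infty)$ with $\mu \ll \lambda$ and, by \eqref{eqn:mut-abs}, $0 \leqsl \lambda(A) \leqsl \|\mu\|_A$ for all $A$. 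Combining this inequality with the upgrade gives $\lambda(A) = 0$ whenever $\nu(A) = 0$; that is, $\lambda$ is absolutely continuous with respect to $\nu$ in the ordinary null-set sense.

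The remaining step converts this null-set absolute continuity into the $\epsi$-$\delta$ statement (ac) for $\mu$. Here I would appeal to the classical equivalence for finite nonnegative scalar measures (the scalar case of the Pettis theorem cited before \COR{abs-abs}): since $\nu(A) = 0$ forces $\lambda(A) = 0$, for every $\eta > 0$ there is $\delta' > 0$ with $\lambda(A) \leqsl \eta$ whenever $\nu(A) \leqsl \delta'$. I then chain this with $\mu \ll \lambda$: given $\epsi > 0$, let $\eta > 0$ be the threshold on $\lambda$ furnished by (ac) for the pair $(\mu,\lambda)$, so that $\lambda(A) \leqsl \eta$ implies $\|\mu\|_A \leqsl \epsi$, and then take $\delta(\epsi) \df \delta'$ corresponding to this $\eta$. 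For any $A$ with $\nu(A) \leqsl \delta(\epsi)$ we obtain $\lambda(A) \leqsl \eta$ and hence $\|\mu\|_A \leqsl \epsi$, which is precisely $\mu \ll \nu$.

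The main obstacle is conceptual rather than computational: recognising that the hypothesis on the operator values $\mu(A)$ must first be promoted to the semivariation $\|\mu\|_A$, so that the control measure $\lambda$ of \COR{abs}—which is squeezed beneath $\|\mu\|$ and, via (ac), dominates it in the absolute-continuity sense—can be compared against $\nu$, after which the two absolute continuities $\mu \ll \lambda$ and $\lambda \ll \nu$ simply compose. No finitely-additive pathology interferes, since \LEM{op} guarantees that $\mu$ is a genuine (countably additive) vector measure and \COR{abs} already packages the uniform strong additivity needed to manufacture $\lambda$.
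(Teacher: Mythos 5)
Your proof is correct and follows essentially the same route as the paper: promote the vanishing of $\mu$ on $\nu$-null sets to the vanishing of the semivariation, use \COR{abs} to obtain $\lambda$ with $\mu\ll\lambda$ and $\lambda\leqsl\|\mu\|_{(\cdot)}$, deduce that $\lambda$ is $\nu$-null-set absolutely continuous, and compose the two absolute continuities. The only (immaterial) difference is that the paper justifies the $\epsi$--$\delta$ step for the pair $(\lambda,\nu)$ by the Radon--Nikodym theorem and integrability of the density, whereas you cite the classical equivalence for finite scalar measures directly.
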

\begin{proof}
The `only if' part is immediate. To show the `if' part, assume $\mu$ vanishes on all sets on which
$\nu$ vanishes. By \COR{abs}, there exists a measure $\lambda\dd \Mm \to [0,\infty)$ such that
\begin{equation}\label{eqn:ll}
\mu \ll \lambda
\end{equation}
and \eqref{eqn:mut-abs} is fulfilled. We infer from the latter condition that $\lambda(A) = 0$ iff
$\|\mu\|_A = 0$. But $\|\mu\|_A = 0$ \iaoi{} $\mu$ vanishes on all measurable subsets of $A$.
We conclude that if $\nu(A) = 0$, then $\lambda(A) = 0$. So, it follows from the Radon-Nikodym
theorem that there exists an $\Mm$-measurable function $g\dd Z \to [0,\infty)$ (where $Z$ is the set
on which $\Mm$ is a $\sigma$-algebra) such that $\lambda(A) = \int_A g \dint{\nu}$ for all $A \in
\Mm$. In particular, $g$ is $\nu$-integrable and therefore for any $\epsi > 0$ there exists $\delta
> 0$ such that $\int_A g \dint{\nu} \leqsl \epsi$ provided $\nu(A) \leqsl \delta$. This property,
combined with \eqref{eqn:ll}, finishes the proof.
\end{proof}

\begin{rem}{fin}
From \THM{fin} one may deduce the following result, which, due to the knowledge of the author, is
new:
\begin{quote}
\textit{The variation of a vector measure $\mu\dd \Mm \to E$ is a finite measure iff
$\sum_{n=1}^{\infty} \|\mu(A_n)\| < \infty$ for any countable collection of pairwise disjoint sets
$A_n \in \Mm$.}
\end{quote}
The necessity is immediate, while the sufficiency follows from the fact that a measure satisfying
the condition formulated above may naturally be identified with an i-measure, as described
below.\par
Assume $\mu\dd \Mm \to E$ is a vector measure which satisfies the above condition. Since every
vector $x$ of $E$ naturally induces a (continuous) linear operator from $\KKK$ into $E$ (which sends
$1$ to $x$), we may identify $\mu$ with a set function of $\Mm$ into $\llL(\KKK,E)$. Under such
an identification, $\mu$ turns out to be an i-measure whose total semivariation is equal
to the total variation of $\mu$, regarded as an $E$-valued set function. We leave the details
to the reader.
\end{rem}

The book \cite{din} is devoted to integration of vector-valued functions with respect
to vector-valued set functions. Below we adapt this concept to define integration with respect
to i-measures, which turns out to be much easier and more elegant.

\begin{dfn}{vint}
Let $\mu\dd \Mm \to \llL(E,F)$ be an i-measure defined on a $\sigma$-algebra $\Mm$ of subsets
of a set $Z$. Denote by $S_{\Mm}(Z,E)$ the set of all functions $f \in \ell_{\infty}(Z,E)$ such that
the set $f(Z)$ is countable and $f^{-1}(\{e\}) \in \Mm$ for any $e \in E$. It is easy to see that
$S_{\Mm}(Z,E)$ is a linear subspace of $\ell_{\infty}(Z,E)$. For any $f \in S_{\Mm}(Z,E)$ we define
\begin{equation}\label{eqn:i-int}
\int_Z f \dint{\mu} = \int_Z f(z) \dint{\mu(z)} \df \sum_{e \in E} \mu(f^{-1}(\{e\})) e
\end{equation}
(the above series is unconditionally convergent; see \DEF{i-m}) and call $\int_Z f \dint{\mu}$
the \textit{integral} of $f$ with respect to $\mu$.\par
The uniform closure of $S_{\Mm}(Z,E)$ coincides with $M_{\Mm}(Z,E)$ (see \DEF{measurable}).
\end{dfn}

Our aim is to extend the integral defined above from $S_{\Mm}(X,E)$ to $M_{\Mm}(X,E)$. This is
enabled thanks to \THM{fin} and the following

\begin{lem}{cont}
For every i-measure $\mu\dd \Mm \to \llL(E,F)$ \textup{(}where $\Mm$ is a $\sigma$-algebra on a set
$Z$\textup{)}, the operator $T\dd S_{\Mm}(Z,E) \ni f \mapsto \int_Z f \dint{\mu} \in F$ is linear
and continuous. Moreover, $\|T\| = \|\mu\|_Z$.
\end{lem}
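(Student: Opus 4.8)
The plan is to establish the two assertions—linearity together with continuity of $T$, and the norm equality $\|T\| = \|\mu\|_Z$—by reducing everything to the definition of the integral on simple functions and the definition \eqref{eqn:semi} of the total semivariation. Linearity is entirely routine: for $f, g \in S_{\Mm}(Z,E)$ one refines the two countable partitions induced by $f$ and $g$ into a common countable partition of $Z$ into members of $\Mm$, on which both functions are constant; then \eqref{eqn:i-int} shows $\int_Z (f + \alpha g)\dint{\mu} = \int_Z f\dint{\mu} + \alpha\int_Z g\dint{\mu}$ directly. I would mention this briefly and move on.

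The substance lies in the norm equality, which I would split into the two inequalities $\|T\| \leqsl \|\mu\|_Z$ and $\|T\| \geqsl \|\mu\|_Z$. For the first, take $f \in S_{\Mm}(Z,E)$ with $\|f\| \leqsl 1$, and write its (countable) range as $\{e_1, e_2, \ldots\}$ with corresponding disjoint sets $A_k = f^{-1}(\{e_k\}) \in \Mm$; then $\|e_k\| \leqsl 1$ for every $k$ and, since the series in \eqref{eqn:i-int} is unconditionally (hence norm) convergent by \DEF{i-m} and \LEM{op}, the partial sums $\sum_{k=1}^N \mu(A_k) e_k$ approximate $\int_Z f\dint{\mu}$ in norm. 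Each such partial sum is one of the expressions whose supremum defines $\|\mu\|_Z$ in \eqref{eqn:semi}, so each has norm at most $\|\mu\|_Z$; passing to the limit gives $\|\int_Z f\dint{\mu}\| \leqsl \|\mu\|_Z$, whence $\|T\| \leqsl \|\mu\|_Z$. This simultaneously proves that $T$ is continuous (and, a posteriori via \THM{fin}, that $\|T\| < \infty$).

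For the reverse inequality I would argue straight from \eqref{eqn:semi}: fix any finite system of pairwise disjoint sets $A_1, \ldots, A_N \in \Mm$ and vectors $x_1, \ldots, x_N \in E$ with $\|x_n\| \leqsl 1$, and form the simple function $f \df \sum_{n=1}^N j_{A_n}(\cdot)\,x_n$, which lies in $S_{\Mm}(Z,E)$ and satisfies $\|f\| \leqsl 1$. Its integral is exactly $\int_Z f\dint{\mu} = \sum_{n=1}^N \mu(A_n) x_n$, so $\|\sum_{n=1}^N \mu(A_n) x_n\| = \|T f\| \leqsl \|T\|$. Taking the supremum over all such finite systems yields $\|\mu\|_Z \leqsl \|T\|$, and combining the two inequalities gives $\|T\| = \|\mu\|_Z$.

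**The main obstacle** I anticipate is purely bookkeeping rather than conceptual: in \eqref{eqn:i-int} the range of $f$ may be countably infinite, so in the $\|T\| \leqsl \|\mu\|_Z$ direction one must be careful to bound the genuinely finite partial sums by $\|\mu\|_Z$ and only then invoke norm convergence of the series to pass to the limit—rather than trying to bound the infinite sum in one stroke. The values $x_k = e_k$ in the range need not have norm exactly $1$, but they satisfy $\|e_k\| \leqsl \|f\| \leqsl 1$, which is precisely what \eqref{eqn:semi} requires, so no rescaling is needed. I would make sure to note that the unconditional convergence guaranteed by \DEF{i-m} is what legitimises treating the countable sum as a norm-convergent limit of its finite partial sums in the first place.
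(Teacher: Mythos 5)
Your proof is correct. The paper explicitly leaves this lemma to the reader ("A simple proof of \LEM{cont} is left to the reader"), and your argument --- bounding the finite partial sums of \eqref{eqn:i-int} by the semivariation \eqref{eqn:semi} and then passing to the limit via unconditional convergence for $\|T\|\leqsl\|\mu\|_Z$, and testing $T$ on the finitely-valued functions $\sum_{n=1}^N j_{A_n}(\cdot)x_n$ for the reverse inequality --- is precisely the intended routine verification, with the finiteness of $\|\mu\|_Z$ (hence continuity) correctly attributed to \THM{fin}.
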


A simple proof of \LEM{cont} is left to the reader.

\begin{dfn}{VINT}
Let $\mu\dd \Mm \to \llL(E,F)$ be an i-measure defined on a $\sigma$-algebra $\Mm$ of subsets
of a set $Z$. For any $f \in M_{\Mm}(Z,E)$, the \textit{integral} $\int_Z f\dint{\mu} = \int_Z f(z)
\dint{\mu(z)}$ of $f$ with respect to $\mu$ is defined as $\bar{T} f$ where $\bar{T}\dd M_{\Mm}(Z,E)
\to F$ is the unique continuous extension of $T\dd S_{\Mm}(Z,E) \ni f \mapsto \int_Z f \dint{\mu}
\in F$. Then $\|\int_Z f \dint{\mu}\| \leqsl \|f\| \cdot \|\mu\|_Z$ for any $f \in M_{\Mm}(Z,E)$.
\end{dfn}

Our main result on i-measures is the following

\begin{thm}[Bounded Weak Convergence Theorem]{bwc}
Let $\mu\dd \Mm \to \llL(E,F)$ be an i-measure \textup{(}where $\Mm$ is a $\sigma$-algebra on a set
$Z$\textup{)}. If $f_n \in M_{\Mm}(Z,E)$ are uniformly bounded and converge pointwise to $f\dd Z \to
E$ in the weak topology of $E$, then $\int_Z f_n \dint{\mu}$ converge to $\int_Z f \dint{\mu}$
in the weak topology of $F$.
\end{thm}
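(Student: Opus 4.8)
The plan is to reduce the $F$-valued weak convergence to a purely scalar statement by testing against functionals $\psi\in F^*$, and then to represent each tested integral as an ordinary Lebesgue integral against a finite nonnegative measure, so that the classical dominated convergence theorem applies. First, though, I would check that the right-hand side is even defined, i.e. that the pointwise weak limit $f$ belongs to $M_{\Mm}(Z,E)$. Boundedness is clear; weak $\Mm$-measurability holds because each $\psi\circ f$ (for $\psi\in E^*$) is a pointwise limit of the $\Mm$-measurable functions $\psi\circ f_n$; and $f(Z)$ is separable because every value $f(z)$ lies in the weak closure of $\bigcup_n f_n(Z)$, which is contained in the norm-closed linear span $E_0$ of $\bigcup_n f_n(Z)$ (a convex norm-closed, hence weakly closed, separable set). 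Having fixed this separable space $E_0\supset f(Z)$, it suffices to fix an arbitrary $\psi\in F^*$ and prove $\psi(\int_Z f_n\dint{\mu})\to\psi(\int_Z f\dint{\mu})$, since weak convergence in $F$ means convergence against every such $\psi$.

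Next I would introduce the set function $\nu\dd\Mm\to E_0^*$ determined by $\scalar{x}{\nu(A)}\df\psi(\mu(A)x)$ for $x\in E_0$, $A\in\Mm$. For each fixed $x$ this is a scalar measure, so $\nu$ is a weak* measure, and its semivariation is at most $\|\psi\|\,\|\mu\|_Z<\infty$ by \THM{fin}. Invoking \COR{abs} I would fix a finite nonnegative measure $\lambda\dd\Mm\to[0,\infty)$ with $\mu\ll\lambda$; condition (ac) forces $\|\mu\|_A=0$ whenever $\lambda(A)=0$, whence $\nu(A)=0$ there, so every scalar measure $\scalar{x}{\nu(\cdot)}$ is absolutely continuous with respect to the finite measure $\lambda$.

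The key—and hardest—step is to produce a single weak* $\lambda$-measurable density $h\dd Z\to E_0^*$ with $\scalar{x}{\nu(A)}=\int_A\scalar{x}{h(z)}\dint{\lambda(z)}$ for all $x\in E_0$ and $A\in\Mm$. Since $E_0$ is separable, such an $h$ exists by the weak* Radon--Nikodym theorem for duals of separable spaces: one assembles it from the Radon--Nikodym derivatives of the measures $\scalar{d_k}{\nu(\cdot)}$ along a countable $\mathbb{Q}$-linear dense set $\{d_k\}\subset E_0$ and passes to a simultaneous version on a conull set. A measurable-selection argument then shows that $\int_Z\|h(z)\|\dint{\lambda(z)}$ equals the semivariation of $\nu$, hence is $\leqsl\|\psi\|\,\|\mu\|_Z<\infty$, so that $\|h(\cdot)\|\in L^1(\lambda)$. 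This integrability is precisely what the later domination requires, and it is the part I expect to demand the most care, because $\lambda$ only dominates, and is not dominated by, the semivariation of $\mu$ (so no pointwise bound $\|h\|\leqsl\mathrm{const}$ is available).

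With $h$ in hand I would verify the representation $\psi(\int_Z g\dint{\mu})=\int_Z\scalar{g(z)}{h(z)}\dint{\lambda(z)}$ for all $g\in M_{\Mm}(Z,E_0)$: it holds for $g\in S_{\Mm}(Z,E_0)$ directly from the definition of $\nu$ together with \eqref{eqn:i-int}, both sides are continuous in the $\|\cdot\|_{\infty}$-norm (the left by \DEF{VINT}, the right because $\int_Z\|h\|\dint{\lambda}<\infty$), and simple functions are uniformly dense in $M_{\Mm}(Z,E_0)$, so the identity extends. Finally, since $f_n(z)\to f(z)$ weakly in $E_0$ and $h(z)\in E_0^*$, we have $\scalar{f_n(z)}{h(z)}\to\scalar{f(z)}{h(z)}$ pointwise in $z$, dominated by $(\sup_n\|f_n\|_{\infty})\,\|h(z)\|\in L^1(\lambda)$. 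Lebesgue's dominated convergence theorem then gives $\int_Z\scalar{f_n}{h}\dint{\lambda}\to\int_Z\scalar{f}{h}\dint{\lambda}$, that is $\psi(\int_Z f_n\dint{\mu})\to\psi(\int_Z f\dint{\mu})$; as $\psi\in F^*$ was arbitrary, $\int_Z f_n\dint{\mu}\to\int_Z f\dint{\mu}$ in the weak topology of $F$.
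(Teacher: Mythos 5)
Your proposal is correct, but it takes a genuinely different route from the paper. The paper's proof first replaces $f_n-f$ by uniformly close simple functions $u_n$, then (after fixing $\psi\in F^*$) uses \LEM{B} to transfer the whole situation to a compact metrisable space: the delicate point there is that the set where the transported functions converge weakly to $0$ is only coanalytic (\LEM{A}), and one must invoke universal measurability of coanalytic sets with respect to the finite variation of $\psi\circ\nu$; the conclusion is then drawn from \LEM{C}, \COR{C-M} and property (BC*), i.e.\ from the transfinite extension machinery of \THM{1}. You instead scalarize directly: you fix $\psi$, observe that $\nu\df\psi\circ\mu\bigr|_{E_0}$ is an $E_0^*$-valued i-measure whose total semivariation --- which for $\llL(E_0,\KKK)$-valued measures is just the variation --- is finite by \THM{fin}, take a control measure $\lambda$ from \COR{abs}, and build a weak* Radon--Nikodym density $h$ with $\|h(\cdot)\|\leqsl \textup{d}|\nu|/\textup{d}\lambda\in L^1(\lambda)$ via a countable $\mathbb{Q}$-linear dense subset of the separable space $E_0$; classical dominated convergence then finishes. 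Your route avoids descriptive set theory and the extension theorem entirely, at the price of importing the weak* Radon--Nikodym theorem for duals of separable spaces (essentially Dunford--Pettis), which you sketch correctly and which is standard; the paper's route reuses machinery it needs anyway (\LEM{C} identifies $\int_Y\cdot\dint{\nu}$ with the canonical extension $\bar{T}$, which is of independent use). Your preliminary verification that $f\in M_{\Mm}(Z,E)$ is a welcome addition the paper leaves implicit. Two small remarks: the parenthetical claim that ``$\lambda$ is not dominated by the semivariation of $\mu$'' is backwards, since \COR{abs} arranges $\lambda(A)\leqsl\|\mu\|_A$ --- though your substantive point, that only integrability and not boundedness of $\|h\|$ is available, stands; and for the identity $\psi\bigl(\int_Z g\dint{\mu}\bigr)=\int_Z\scalar{g}{h}\dint{\lambda}$ on $S_{\Mm}(Z,E_0)$ you should note that the interchange of the sum over $e\in E$ with the integral is licensed by $\sum_e\int_{g^{-1}(\{e\})}|\scalar{e}{h}|\dint{\lambda}\leqsl\|g\|\int_Z\|h\|\dint{\lambda}<\infty$.
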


The main difficulty in the proof of the above result is that sequences which weakly converge to $0$
may consist of unit vectors. We precede the proof of \THM{bwc} by a few auxiliary results. From now
until the end of the proof, $Z$, $\Mm$ and $\mu$ are as specified in \THM{bwc}.\par
We begin with a counterpart of the Bartle Bounded Convergence \cite{bar} (see also Theorem~1
on page~56 in \cite{d-u}) for i-measures. Other results in the topic of bounded and dominated
convergence theorems the reader may find in \cite{t-w} and \cite{s-t}.

\begin{thm}[Bounded Norm Convergence Theorem]{bnc}
If $f_n \in M_{\Mm}(Z,E)$ are uniformly bounded and converge pointwise to $f\dd Z \to E$ in the norm
topology of $E$, then $\int_Z f_n \dint{\mu}$ converge to $\int_Z f \dint{\mu}$ in the norm topology
of $F$.
\end{thm}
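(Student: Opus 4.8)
The plan is to reduce to a null sequence and then split the integral by an Egorov-type argument, using the finiteness of the total semivariation (\THM{fin}) together with the control measure furnished by \COR{abs}. Since norm convergence forces the scalar functions $\|g_n(\cdot)\|$ to tend to $0$ pointwise, Egorov's theorem applies directly---this is exactly what fails in the weak case flagged before \THM{bwc}, so the norm case is the genuinely accessible one.

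First I would check that $f \in M_{\Mm}(Z,E)$, so that the right-hand side is defined: the range $f(Z)$ is contained in the closure of $\bigcup_n f_n(Z)$, hence separable, and for each $\psi \in E^*$ the function $\psi \circ f = \lim_n \psi \circ f_n$ is a pointwise limit of $\Mm$-measurable scalar functions, hence $\Mm$-measurable; thus $f$ is $\Mm$-measurable by \DEF{measurable}, and bounded because the $f_n$ are uniformly bounded. Setting $g_n \df f_n - f$ and using linearity of the integral (\DEF{VINT}), it suffices to show $\int_Z g_n \dint{\mu} \to 0$ in norm, where $g_n \in M_{\Mm}(Z,E)$ satisfy $\|g_n\| \leqsl M$ for some $M$ and $\|g_n(z)\| \to 0$ for every $z$; note that each $z \mapsto \|g_n(z)\|$ is a nonnegative $\Mm$-measurable scalar function.

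Next I would invoke \COR{abs} to obtain a finite nonnegative measure $\lambda\dd \Mm \to [0,\infty)$ satisfying condition (ac). Fix $\epsi > 0$ and take $\delta = \delta(\epsi) > 0$ as in (ac), so that $\|\mu\|_A \leqsl \epsi$ whenever $\lambda(A) \leqsl \delta$. Since $\lambda$ is finite and $\|g_n(\cdot)\| \to 0$ pointwise, Egorov's theorem yields a set $A \in \Mm$ with $\lambda(Z \setminus A) \leqsl \delta$ on which the convergence is uniform, i.e.\ $\eta_n \df \sup_{z \in A} \|g_n(z)\| \to 0$.

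Finally I would split the integral. Writing $g_n = g_n' + g_n''$, where $g_n'$ agrees with $g_n$ on $A$ and vanishes off $A$, and $g_n''$ agrees with $g_n$ on $Z \setminus A$ and vanishes on $A$, the bound of \DEF{VINT} gives $\|\int_Z g_n' \dint{\mu}\| \leqsl \eta_n \|\mu\|_Z$, which tends to $0$ because $\|\mu\|_Z < \infty$ by \THM{fin}. For the second piece I would use the localized estimate $\|\int_Z h \dint{\mu}\| \leqsl \|h\| \cdot \|\mu\|_B$, valid whenever $h$ vanishes off $B$: this follows for $h \in S_{\Mm}(Z,E)$ directly from \eqref{eqn:i-int} and the definition of the semivariation restricted to subsets of $B$, and then extends to arbitrary such $h$ by uniform approximation by simple functions supported on $B$ and the continuity of the integral (\LEM{cont}). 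Applied with $B = Z \setminus A$, it gives $\|\int_Z g_n'' \dint{\mu}\| \leqsl M \|\mu\|_{Z \setminus A} \leqsl M \epsi$. Hence $\limsup_n \|\int_Z g_n \dint{\mu}\| \leqsl M\epsi$, and letting $\epsi \to 0$ finishes the proof. The only step requiring real care is the localized semivariation bound; everything else is a routine assembly of Egorov's theorem with the finiteness and absolute-continuity results already in hand.
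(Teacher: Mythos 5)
Your proof is correct and follows essentially the same route as the paper: reduce to the null sequence $g_n = f_n - f$, take the control measure from \COR{abs}, apply Egorov, and split the integral into the part where convergence is uniform and the part supported on a set of small semivariation. The only cosmetic difference is that the paper first replaces $g_n$ by simple functions $u_n$ so that the localized bound $\|\int_Z j_B h \dint{\mu}\| \leqsl \|h\|\cdot\|\mu\|_B$ is immediate from \eqref{eqn:i-int}, whereas you prove that bound for general measurable $h$ vanishing off $B$ by uniform approximation---both justifications are sound.
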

\begin{proof}
We mimic the proof of the Bartle Convergence Theorem presented in \cite{d-u}. It follows from
\COR{abs} that there is a probabilistic measure $\lambda\dd \Mm \to [0,1]$ such that (ac) holds.
We need to show that $\|\int_Z g_n \dint{\mu}\|$ converge to $0$ for $g_n \df f_n - f$. For each
$n$, there is $u_n \in S_{\Mm}(X,E)$ such that $\|g_n - u_n\| < 2^{-n}$ and $\|\int_Z g_n \dint{\mu}
- \int_Z u_n \dint{\mu(z)}\| < 2^{-n}$. We conclude that it suffices to show that $\|\int_Z u_n
\dint{\mu}\|$ converge to $0$. Note that the functions $u_n$ are uniformly bounded and converge
pointwise to $0$ in the norm topology of $E$. Suppose $\|u_n(z)\| \leqsl C$ for all $n$ and $z \in
Z$ (and a positive constant $C$). Fix $\epsi > 0$ and put $\delta = \delta(\epsi/C)$ (see (ac)).
It follows from Egoroff's theorem that there exists a set $A \in \Mm$ such that $\lambda(A) \leqsl
\delta$ and the functions $u_n$ converge uniformly to $0$ on $Z \setminus A$. So, denoting
(as usual) by $j_A$ and $j_{Z \setminus A}$ the characteristic functions of $A$ and $Z \setminus A$
(respectively), we see that the functions $j_{Z \setminus A} u_n$ converge uniformly to $0$.
Consequently, $\lim_{n\to\infty} \|\int_Z j_{Z \setminus A} u_n \dint{\mu}\| = 0$. Further,
it follows from the definition of the vector integral that $\|\int_Z j_A u_n \dint{\mu}\| \leqsl
\|u_n\| \cdot \|\mu\|_A$. Finally, from the choice of $A$ and $\delta$ we infer that $\|\mu\|_A
\leqsl \epsi/C$ and therefore
\begin{equation*}
\limsup_{n\to\infty} \Bigl\|\int_Z u_n \dint{\mu}\Bigr\| \leqsl \limsup_{n\to\infty} \Bigl\|\int_Z
j_A u_n \dint{\mu}\Bigr\| + \limsup_{n\to\infty} \Bigl\|\int_Z j_{Z \setminus A} u_n
\dint{\mu}\Bigr\| \leqsl \epsi
\end{equation*}
and we are done.
\end{proof}

\begin{lem}{A}
Let $Y$ be a compact metrisable space and $u_n$ be members of $M(Y,E)$. Then the set $S$ of all
$y \in Y$ for which $u_n(y)$ converge to $0$ in the weak topology of $E$ is coanalytic
\textup{(}in the sense of Suslin\textup{)}.
\end{lem}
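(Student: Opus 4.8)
The plan is to realise $S$ as the complement of the projection of a Borel set and then invoke the classical projection theorem, which is precisely what produces a coanalytic set. First I would discard the ambient space: set $E_0 \df \overline{\operatorname{span}} \bigcup_{n} u_n(Y)$. Since each $u_n$ has separable range, $E_0$ is a separable closed subspace of $E$ containing every value $u_n(y)$. By the Hahn--Banach theorem the weak topology of $E$ restricted to $E_0$ agrees with the weak topology of $E_0$, so a point $y$ lies in $S$ iff $\lim_{n\to\infty}\phi(u_n(y)) = 0$ for every $\phi \in E_0^*$; by homogeneity the functional $\phi$ may be restricted to the closed unit ball $K$ of $E_0^*$. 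Because $E_0$ is separable, the Banach--Alaoglu theorem makes $K$, carrying the weak* topology, a compact metrisable (hence Polish) space. Thus $y \in S \iff \forall\,\phi \in K\dd\ \phi(u_n(y)) \to 0$.

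Next I would examine the set $B \df \{(y,\phi) \in Y \times K\dd\ \phi(u_n(y)) \to 0\}$ and show it is Borel. The key observation is that the evaluation map $e\dd E_0 \times K \to \KKK$, $e(x,\phi) \df \phi(x)$, is \emph{jointly} continuous: if $x_j \to x$ in norm and $\phi_j \to \phi$ in the weak* topology of $K$, then $\phi_j(x_j) - \phi(x) = \bigl(\phi_j(x) - \phi(x)\bigr) + \phi_j(x_j - x) \to 0$, the second term being controlled by $\|x_j - x\|$ since $\|\phi_j\| \leqsl 1$ on $K$. Moreover, as $Y$ is compact metrisable we have $\Mm(Y) = \Bb(Y)$, so each $u_n$ is a Borel map $Y \to E_0$ with separable range. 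Consequently $g_n\dd Y \times K \to \KKK$, $g_n(y,\phi) \df \phi(u_n(y)) = e\bigl(u_n(y),\phi\bigr)$, is Borel, being the composition of the Borel map $(y,\phi) \mapsto (u_n(y),\phi)$ with the continuous map $e$. Since ``$\lim_n g_n = 0$'' is a Borel condition on a sequence of Borel functions, namely $B = \bigcap_{j\geqsl 1}\bigcup_{N\geqsl 1}\bigcap_{n\geqsl N}\{(y,\phi)\dd\ |g_n(y,\phi)| \leqsl 1/j\}$, the set $B$ is Borel in the Polish space $Y \times K$.

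Finally, writing $\pi\dd Y \times K \to Y$ for the projection, the reduction of the first paragraph gives $S = Y \setminus \pi\bigl((Y\times K)\setminus B\bigr)$. As $(Y\times K)\setminus B$ is Borel, its projection $\pi\bigl((Y\times K)\setminus B\bigr)$ is analytic by the projection theorem, and therefore $S$, being its complement, is coanalytic, as required. I expect the main obstacle to be conceptual rather than computational: one must verify that the quantification over all of $E^*$ defining weak convergence is faithfully captured by quantification over the Polish space $K$ (this is what the passage to the separable $E_0$ and to the weak*-compact ball secures), and that the evaluation pairing is jointly continuous --- a point that genuinely uses the uniform norm bound available on $K$ and would fail on all of $E_0^*$. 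Once these are in place, the measurability bookkeeping and the projection theorem deliver the conclusion routinely.
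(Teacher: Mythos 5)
Your proof is correct, and it reaches the same structural conclusion as the paper --- $S$ is exhibited as $Y \setminus \pi(\text{Borel set})$ in a product of $Y$ with a compact metrisable parameter space --- but by a genuinely different realisation of that parameter space. The paper embeds the separable space $E_0$ isometrically into $C([0,1],\KKK)$ and converts weak convergence into pointwise convergence of the image functions, so its auxiliary space is $[0,1]$; you instead quantify directly over the closed unit ball $K$ of $E_0^*$, made compact metrisable by Banach--Alaoglu and separability, and obtain Borelness of $B$ from the joint continuity of the pairing on $E_0 \times K$ (norm times weak*). Each step of your argument checks out: the reduction to $E_0$ via Hahn--Banach, the joint continuity estimate using $\|\phi_j\| \leqslant 1$, the Borelness of $(y,\phi) \mapsto (u_n(y),\phi)$ (the target being second countable, coordinatewise Borel suffices), and the projection theorem. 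Your route has a small advantage worth noting: since weak convergence to $0$ is \emph{by definition} the condition ``$\phi(u_n(y)) \to 0$ for all $\phi$ in the dual ball,'' no boundedness hypothesis enters anywhere. The paper's route relies on the characterisation of weak null sequences in $C([0,1],\KKK)$, which requires uniform boundedness in addition to pointwise convergence; as the $u_n$ in the statement are not assumed uniformly bounded, the paper's condition (w) strictly speaking needs to be supplemented by intersecting with the (Borel) set $\{y\dd\ \sup_n\|u_n(y)\| < \infty\}$, a repair your argument renders unnecessary. What the paper's approach buys in exchange is a very concrete parameter space and Borel functions $v_n(y,t)$ of two variables, which fits the notational framework of its Lemma on the embedding $L$ into $\ell_\infty(X\times Y,\KKK)$; your approach is self-contained functional analysis and slightly more robust.
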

\begin{proof}
Let us recall that $S$ is coanalytic provided $Y \setminus S$ coincides with the image of a Borel
subset of $Y \times [0,1]$ under a continuous function, which we shall now show.\par
Let $E_0$ be the closed linear span of the set $\bigcup_{n=1}^{\infty} u_n(Y)$. Since $E_0$ is
separable, there exists an isometric linear operator $U\dd E_0 \to C([0,1],\KKK)$. Since sequences
of elements of $C([0,1],\KKK)$ which converge to $0$ in the weak topology (of $C([0,1],\KKK)$) are
simply characterised, we infer that for an arbitrary sequence of elements $z_n$ of $E$,
\begin{itemize}
\item[(w)] $z_n$ converge to $0$ in the weak topology of $E$ iff $U z_n$ converge pointwise to $0$.
\end{itemize}
Now define $v_n\dd Y \times [0,1] \to E$ by $v_n(y,t) \df U(u_n(y))(t)$. Then $v_n \in M(Y \times
[0,1],\KKK)$ (compare \LEM{2}), which means, by the metrisability of $Y$, that $v_n$ are Borel.
We conclude that the set $B \df \{(y,t) \in Y \times [0,1]\dd\ \lim_{n\to\infty} v_n(y,t) = 0\}$ is
Borel in $Y \times [0,1]$. Note that (w) implies that
\begin{equation*}
y \in S \iff \{y\} \times [0,1] \subset B.
\end{equation*}
So, denoting by $\pi\dd Y \times [0,1] \to Y$ the natural projection, we see that $S = Y \setminus
\pi((Y \times [0,1]) \setminus B)$, which finishes the proof.
\end{proof}

\begin{lem}{B}
Let $u_n \in S_{\Mm}(Z,E)$ be uniformly bounded and converge pointwise to $0$ in the weak topology
of $E$ and let $\psi \in F^*$. There exist a compact metrisable space $Y$, an i-measure $\nu\dd
\Mm(Y) \to \llL(E,F)$, and uniformly bounded functions $v_n \in M(Y,E)$ which converge pointwise
to $0$ in the weak topology of $E$ and satisfy \textup{(}for each $n$\textup{)}
\begin{equation}\label{eqn:same}
\psi\Bigl(\int_Z u_n \dint{\mu}\Bigr) = \psi\Bigl(\int_Y v_n \dint{\nu}\Bigr).
\end{equation}
\end{lem}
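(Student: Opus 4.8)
The goal is to replace the abstract σ-algebra $\Mm$ on $Z$ by a genuine Borel structure on a compact metrisable space, while preserving both the integral (against a single functional $\psi$) and the weak convergence of the integrands. The natural device is to collapse $Z$ onto a metrisable quotient generated by countably much data: the functions $u_n$ take only countably many values, so there are countably many sets in $\Mm$ that matter, and the functional $\psi$ together with the i-measure $\mu$ supply countably many scalar measures. I would build $Y$ as the Stone-type or Gelfand-type spectrum attached to the countably generated sub-σ-algebra, or more concretely embed things into a cube.

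Concretely, first I would isolate the relevant data. Let $E_0$ be the closed linear span of $\bigcup_n u_n(Z)$, a separable subspace of $E$, and let $\Mm_0 \subset \Mm$ be the σ-algebra generated by all the sets $u_n^{-1}(\{e\})$ (countably many, since each $u_n \in S_{\Mm}(Z,E)$ has countable range). Then $\Mm_0$ is countably generated, so there is a measurable map $q\dd Z \to Y$ onto a compact metrisable space $Y$ (for instance $Y = \{0,1\}^{\NNN}$ coordinatised by characteristic functions of a countable generating family, then pass to a compact metrisable quotient) such that $\Mm_0 = q^{-1}(\Bb(Y))$ up to the usual identifications. Because the $u_n$ are $\Mm_0$-measurable with countable range, each $u_n$ factors as $u_n = v_n \circ q$ for some $v_n\dd Y \to E$; I would check that these $v_n$ lie in $M(Y,E)$, are uniformly bounded by the same constant, and converge pointwise to $0$ weakly (the last point is purely pointwise and transfers verbatim through $q$, since $q$ is onto).

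Next I would push the i-measure forward. Define $\nu\dd \Bb(Y) \to \llL(E,F)$ by $\nu(B) \df \mu(q^{-1}(B))$. Since $q^{-1}$ is a σ-algebra homomorphism into $\Mm$, countable additivity and independent convergence of the defining series are inherited directly from $\mu$, so $\nu$ is again an i-measure by \DEF{i-m}. The crucial bookkeeping is the identity \eqref{eqn:same}: for the simple functions $u_n$ it is immediate from the definition \eqref{eqn:i-int}, because $u_n^{-1}(\{e\}) = q^{-1}(v_n^{-1}(\{e\}))$ gives $\int_Z u_n \dint{\mu} = \sum_{e} \mu(q^{-1}(v_n^{-1}(\{e\}))) e = \sum_{e} \nu(v_n^{-1}(\{e\})) e = \int_Y v_n \dint{\nu}$, and applying $\psi$ yields the stated equality (indeed without even needing $\psi$ at this stage).

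I expect the main obstacle to be the construction of the metrisable model $Y$ together with the factorisation $u_n = v_n \circ q$, i.e.\ verifying that a countably generated sub-σ-algebra of an abstract $\Mm$ is realised as the pullback of the Borel σ-algebra of a compact metrisable space in a way that is simultaneously compatible with \emph{all} the $u_n$ and with measurability of the resulting $v_n$. The values-and-preimages formalism makes each individual step routine, but one must be careful that the generating family is chosen so that every $u_n$ descends to a genuinely $\Bb(Y)$-measurable function with separable (here countable) range, so that $v_n \in M(Y,E)$ in the sense of \DEF{measurable}; this is where I would spend the care, the rest being a transparent transport of structure along $q$.
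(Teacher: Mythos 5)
There is a genuine gap, and it sits exactly at the crux of the lemma. Your map $q=\Phi\dd Z\to Y=\{0,1\}^{\NNN}$ is \emph{not} onto $Y$: its image $Z'\df\Phi(Z)$ is just some subset of the cube, and in general it is not Borel (it can be a proper analytic set) and not closed. The factorisation $u_n=v_n\circ q$ determines $v_n$ only on $Z'$; to get $v_n\in M(Y,E)$ you must extend to all of $Y$ (say by $0$ off suitable Borel sets, as one naturally does), and then nothing forces $v_n(y)\to 0$ weakly at points $y\in Y\setminus Z'$. So the step ``the last point is purely pointwise and transfers verbatim through $q$, since $q$ is onto'' fails. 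Your parenthetical remark that the identity holds ``without even needing $\psi$'' is the telltale sign: if the argument worked as written, the lemma would not need $\psi$ at all, whereas the statement deliberately asserts only the scalar equality $\psi(\int_Z u_n\dint{\mu})=\psi(\int_Y v_n\dint{\nu})$.

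The paper's proof confronts precisely this obstacle. It builds the same pushforward $\nu(B)\df\mu(\Phi^{-1}(B))$ and functions $w_n\in M(Y,E)$ with $w_n\circ\Phi=u_n$, so that $w_n\to 0$ weakly only on $Z'$. It then invokes \LEM{A}: the set $S$ of points where $w_n(y)\to 0$ weakly is coanalytic and contains $Z'$. Since coanalytic sets are measurable with respect to every finite Borel measure, there are Borel sets $A\subset S\subset B$ with $\lambda(B\setminus A)=0$, where $\lambda$ is the variation of the scalar i-measure $\nu_{\psi}=\psi\circ\nu$. Setting $v_n\df j_A w_n$ secures pointwise weak convergence to $0$ everywhere on $Y$ (because $A\subset S$), while the integral is unchanged \emph{after applying $\psi$} because the discrepancy between $j_A w_n$ and $j_B w_n$ is supported on a $\nu_{\psi}$-null set and $B\supset Z'$ makes the transport identity valid with $C=B$. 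To repair your proposal you would need to reproduce this descriptive-set-theoretic step (or an equivalent measurable-envelope argument); the ``transparent transport of structure along $q$'' alone cannot deliver functions on a compact metrisable $Y$ that both represent the integrals and converge weakly to $0$ at every point.
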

\begin{proof}
Denote by $\bbB$ the collection of all nonempty sets of the form $u_n^{-1}(\{e\})$ where $n$ and
$e \in E$ are arbitrary. Observe that $\bbB$ is countable (and nonempty). So, we may arrange all
members of $\bbB$ in an infinite sequence $A_1,A_2,\ldots$ (repeating, if necessary, some of them).
For simplicity, let $j_n\dd Z \to \{0,1\}$ stand for the characteristic function of $A_n$. Put $Y
\df \{0,1\}^{\omega}$ (that is, $Y$ is the infinite countable power of $\{0,1\}$) and equip $Y$ with
the product topology. Define $\Phi\dd Z \to Y$ by $\Phi(z) \df (j_n(z))_{n=1}^{\infty}$. It is easy
to see that $\Phi^{-1}(B) \in \Mm$ for any $B \in \Mm(Y)$ (since $Y$ is metrisable, $\Mm(Y)$
consists of all Borel sets in $Y$). Further, let $\nu\dd \Mm(Y) \to \llL(E,F)$ be given by $\nu(B)
\df \mu(\Phi^{-1}(B))$. It is readily seen that $\nu$ is an i-measure such that $\|\nu\|_Z \leqsl
\|\mu\|_Z$. Further, we put $Z' \df \Phi(Z)$ and $Y_m \df \{(y_n)_{n=1}^{\infty} \in Y\dd\ y_m = 1\}
(\in \Mm(Y))$. Observe that
\begin{equation}\label{eqn:trace}
\Phi(A_n) = Z' \cap Y_n
\end{equation}
for any $n > 0$. We claim that there exist uniformly bounded functions $w_n \in M(Y,E)$ such that
for any superset $C \in \Mm(Y)$ of $Z'$ and each $n$,
\begin{equation}\label{eqn:u-w}
\int_Z u_n \dint{\mu} = \int_Y j_C(y) w_n(y) \dint{\nu(y)}
\end{equation}
where $j_C\dd Y \to \{0,1\}$ is the characteristic function of $C$. We may define the functions
$w_n$ as follows. Fix $n$ and for simplicity put (for a moment) $u = u_n$. Write $u(Y) = \{e_1,e_2,
\ldots\}$ where the vectors $e_k$ are distinct (so, there can be finitely many such vectors) and
denote by $m_k$ a natural number such that $A_{m_k} = u^{-1}(\{e_k\})$. Notice that the sets
$A_{m_1},A_{m_2},\ldots$ are pairwise disjoint and cover $Z$. It follows from the definition
of $\Phi$ that hence also the sets $\Phi(A_{m_1}),\Phi(A_{m_2}),\ldots$ are pairwise disjoint
(although $\Phi$ may not be one-to-one). Thus, we infer from \eqref{eqn:trace} that there are
\textit{pairwise disjoint} sets $B_k \in \Mm(Y)$ such that
\begin{equation}\label{eqn:Bk}
B_k \cap Z' = \Phi(A_{m_k}).
\end{equation}
We define $w_n$ by the rules: $w_n(y) = e_k$ for $y \in B_k$ and $w_n(y) = 0$ if $y \notin \bigcup_k
B_k$. Since $w_n(Y) \subset u_n(Y) \cup \{0\}$, we see that the functions $w_n$ are uniformly
bounded (it is also clear that they belong to $M(Y,E)$). Let us briefly check \eqref{eqn:u-w}.
If $Z ' \subset C \in \Mm(Y)$ and $w \df j_C w_n$, then (under the above notation) $\Phi(A_{m_k}) =
(B_k \cap C) \cap Z'$, thanks to \eqref{eqn:Bk}. So, $\Phi^{-1}(w^{-1}(\{e_k\})) = A_{m_k}$ provided
$e_k \neq 0$. Hence
\begin{equation*}
\int_Y w \dint{\nu} = \sum_{e \in E} \nu(w^{-1}(\{e\})) e = \sum_{e_k \neq 0} \mu(A_{m_k}) e_k
= \int_Z u_n \dint{\mu},
\end{equation*}
which finishes the proof of \eqref{eqn:u-w}.\par
Now let $S$ consist of all $y \in Y$ for which $w_n(y)$ converge to $0$ in the weak topology of $E$.
It follows from \LEM{A} that $S$ is coanalytic. Observe that $w_n \circ \Phi = u_n$ (thanks
to \eqref{eqn:Bk}) and therefore $Z' \subset S$. Denote by $\nu_{\psi}\dd \Mm(Y) \to \llL(E,\KKK) =
E^*$ the i-measure given by $\nu_{\psi}(A) = \psi \circ \nu(A)$. Now let $\lambda\dd \Mm \to
[0,\infty]$ be the so-called \textit{variation} of $\nu_{\psi}$; that is,
\begin{equation*}
\lambda(A) = \sup\Bigl\{\sum_{n=1}^{\infty} \|\nu_{\psi}(A_n)\|\dd\ A_n \in \Mm \textup{ are
pairwise disjoint subsets of } A\Bigr\}.
\end{equation*}
It follows from Proposition~4 (on page~54) in \S4 of Chapter~I in \cite{din} (and may easily be
checked) that $\lambda(Z) \leqsl \|\nu\|_Z$. So, $\lambda$ is a finite measure. Since coanalytic
sets are \textit{measurable} with respect to any finite Borel measure (consult, for example,
Theorem~A.13 in \cite{tak}; see also Theorem~1 in \S4 of Chapter~XIII in \cite{k-m}), we deduce that
there are two sets $A, B \in \Mm(Y)$ such that $A \subset S \subset B$ and $\lambda(B \setminus A) =
0$. Consequently, $B \supset Z'$ and thus \eqref{eqn:u-w} holds for $C = B$. We put $v_n \df j_A w_n
\in M(Y,E)$. We see that the functions $v_n$ are uniformly bounded and converge pointwise to $0$
in the weak topology of $E$, since $A \subset S$. To show \eqref{eqn:same}, we note that $\int_Y
(v_n - j_B w_n) \dint{\nu_{\psi}} = 0$ (since $\lambda(B \setminus A) = 0$) and hence
\begin{multline*}
\psi\Bigl(\int_Y v_n \dint{\nu}\Bigr) = \psi\Bigl(\sum_{e \in E} \nu(v_n^{-1}(\{e\})) e\Bigr) =
\sum_{e \in E} \nu_{\psi}(v_n^{-1}(\{e\})) e = \int_Y v_n \dint{\nu_{\psi}}\\
= \int_Y j_B w_n \dint{\nu_{\psi}} = \psi\Bigl(\int_Y j_B w_n \dint{\nu}\Bigr) = \psi\Bigl(\int_Z
u_n \dint{\mu}\Bigr).
\end{multline*}
\end{proof}

\begin{lem}{C}
Let $Y$ be a compact space and $\nu\dd \Mm(Y) \to \llL(E,F)$ be an i-measure. Let $T\dd C(Y,E) \to
F$ be given by $T f \df \int_Y f \dint{\nu}$ and let $\bar{T}\dd M(Y,E) \to F^{**}$ be as specified
in \COR{C-M}. Then $\|T\| = \|\nu\|_Y$ and
\begin{equation}\label{eqn:Tint}
\bar{T} f = \int_Y f \dint{\nu} \qquad (f \in M(Y,E)).
\end{equation}
In particular, $\bar{T}\dd M(Y,E) \to F$.
\end{lem}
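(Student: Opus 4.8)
The plan is to identify the weak$^*$-sequential extension $\bar T$ of \COR{C-M} with the norm-continuous integral operator. Write $\hat T\dd M(Y,E)\to F$ for the map $\hat T f\df\int_Y f\dint{\nu}$ furnished by \DEF{VINT}; by \LEM{cont} (together with the fact that a continuous extension to the uniform closure preserves the operator norm) one has $\|\hat T\|=\|\nu\|_Y$, and $\hat T$ takes its values in $F$. On the other hand, regarding $T\dd C(Y,E)\to F$ as an operator into $F^{**}=(F^*)^*$ via the canonical embedding, \COR{C-M} (applied with $F^*$ playing the role of the space there denoted $F$) yields $\bar T\dd M(Y,E)\to F^{**}$ extending $T$, satisfying (BC*) with respect to the weak* topology of $F^{**}$, and with $\|\bar T\|=\|T\|$. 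Since $\hat T f=Tf=\bar T f$ for $f\in C(Y,E)$, all three assertions follow once I show that $\bar T=\hat T$ on the whole of $M(Y,E)$: this gives \eqref{eqn:Tint} and the inclusion $\bar T(M(Y,E))\subset F$ at once, and then $\|T\|=\|\bar T\|=\|\hat T\|=\|\nu\|_Y$.

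The decisive point is to run the verification against the \emph{norm}-pointwise description of $M(Y,E)$ rather than the weak one. Recall from the proof of \LEM{1} that $M(Y,E)$ equals the smallest class $N(E)\subset\ell_{\infty}(Y,E)$ that contains $C(Y,E)$ and is closed under condition (M1$'$), i.e.\ under passage to pointwise limits in the norm topology of $E$ of uniformly bounded sequences. Accordingly I set $\tilde D\df\{f\in M(Y,E)\dd\ \bar T f=\hat T f\}$ and check that $\tilde D$ contains $C(Y,E)$ and is (M1$'$)-closed. So suppose $f_n\in\tilde D$ are uniformly bounded and converge to $f$ pointwise in norm; then $f\in M(Y,E)$ because $N(E)$ is (M1$'$)-closed, so $\hat T f$ and $\bar T f$ are both defined. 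Now \THM{bnc} gives $\hat T f_n=\int_Y f_n\dint{\nu}\to\int_Y f\dint{\nu}=\hat T f$ in the norm of $F$, hence also in the weak* topology of $F^{**}$; and since norm-pointwise convergence is in particular weak-pointwise convergence, (BC*) gives $\bar T f_n\to\bar T f$ in the weak* topology of $F^{**}$. As $\bar T f_n=\hat T f_n$ for every $n$ and weak* limits are unique, we obtain $\bar T f=\hat T f$, so $f\in\tilde D$. By minimality of $N(E)$ this forces $M(Y,E)=N(E)\subset\tilde D$, that is $\bar T=\hat T$.

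The main obstacle is exactly the subtlety just exploited: $\bar T$ is characterised through convergence in the \emph{weak} topology of $E$, so the naive route would invoke a bounded \emph{weak} convergence theorem for the integral---but that is \THM{bwc}, toward whose proof this lemma is an auxiliary step, and invoking it here would be circular. The escape is that, by \LEM{1}, the class $M(Y,E)$ is already generated from $C(Y,E)$ by bounded \emph{norm}-pointwise limits, so only \THM{bnc} is needed to propagate the identity $\bar T=\hat T$ through the generating process, while (BC*) together with uniqueness of weak* limits closes each inductive step. Everything else is routine: the norm value $\|\hat T\|=\|\nu\|_Y$ is read off from \LEM{cont} and \DEF{VINT}, and the equality $\|T\|=\|\bar T\|$ is part of the conclusion of \COR{C-M}.
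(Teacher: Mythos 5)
Your argument is correct and coincides with the paper's own proof: both identify $\bar T$ with the norm-continuous integral operator of \DEF{VINT} by running the verification over the (M1$'$)-generation of $M(Y,E)$ from $C(Y,E)$ established in the proof of \LEM{1}, using \THM{bnc} on the integral side, (BC*) plus the fact that norm-pointwise convergence implies weak-pointwise convergence on the $\bar T$ side, and uniqueness of weak* limits in $F^{**}$ to close each step. The norm identities are then read off from \LEM{cont} and \COR{C-M} exactly as in the paper.
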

\begin{proof}
Denote by $S f$ the right-hand side expression of \eqref{eqn:Tint}. Then $S\dd M(Y,E) \to F$ is
linear, continuous and $\|S\| = \|\nu\|_Y$. So, to conclude the whole assertion, it suffices to show
that $S = \bar{T}$. Since the weak topology of $F$ coincides with the topology on $F$ inherited from
the weak* topology of $F^{**}$, we infer from \THM{bnc} and \COR{C-M} that for $L \df S$ as well as
$L \df \bar{T}$ one has
\begin{itemize}
\item[(bc*)] whenever $u_n \in M(Y,E)$ are uniformly bounded and converge pointwise to $u\dd Y \to
 E$ in the norm topology of $E$, then $L u_n$ converge to $L u$ in the weak* topology of $F^{**}$.
\end{itemize}
Further, the proof of \LEM{1} shows that $M(Y,E)$ coincides with the smallest set among all $B
\subset \ell_{\infty}(Y,E)$ which include $C(Y,E)$ and satisfy (M1') with $Y$ inserted in place
of $X$ (see the proof of \LEM{1}). So, we easily infer from this property and from (bc*) that $S =
\bar{T}$.
\end{proof}

\begin{proof}[Proof of \THM{bwc}]
We begin similarly as in the proof of \THM{bnc}: it is enough to show that $\int_Z g_n \dint{\mu}$
converge to $0$ in the weak topology of $F$ for $g_n \df f_n - f$. For each $n$, there is $u_n \in
S_{\Mm}(X,E)$ such that $\|g_n - u_n\| < 2^{-n}$ and $\|\int_Z g_n \dint{\mu} - \int_Z u_n
\dint{\mu(z)}\| < 2^{-n}$. We conclude that it suffices to show that $\int_Z u_n \dint{\mu}$
converge to $0$ in the weak topology of $F$. Note that the functions $u_n$ are uniformly bounded and
converge pointwise to $0$ in the weak topology of $E$. Let $\psi \in F^*$. We only need to show that
\begin{equation}\label{eqn:lim}
\lim_{n\to\infty}\psi\Bigl(\int_Z u_n \dint{\mu}\Bigr) = 0.
\end{equation}
It follows from \LEM{B} that we may and do assume $Z$ is a compact topological space and $\Mm =
\Mm(Z)$. Let $T\dd C(Z,E) \to F$ be given by $T f \df \int_Z \dint{\mu}$ and let $\bar{T}\dd M(Z,E)
\to F^{**}$ be as specified in \COR{C-M}. We infer from \LEM{C} that $\int_Z u_n \dint{\mu} =
\bar{T} u_n$, and from (BC*) that $\bar{T} u_n$ converge pointwise to $0$ in the weak* topology
of $F^{**}$. Consequently, \eqref{eqn:lim} is fulfilled.
\end{proof}

\begin{rem}{integrable}
\THM{bnc} enables us to define vector-valued integrable functions with respect to i-measures.
Namely, if $\mu\dd \Mm \to \llL(E,F)$ is an i-measure on a set $Z$ and $g\dd Z \to E$ is
an arbitrary $\Mm$-measurable (in the sense of \DEF{measurable}) function, we put $\Dd(g) \df \{A
\in \Mm\dd\ j_A g \in \ell_{\infty}(Z,E)\}$. Notice that $\Dd(g)$ is an ideal in $\Mm$ such that
every set $A$ in $\Mm$ is a countable union of members of $\Dd(g)$. We call the function $g$
\textit{integrable} if the set function $\nu\dd \Dd(g) \ni A \mapsto \int_Z j_A g \dint{\mu} \in F$
extends to a (necessarily unique) vector measure $\bar{\nu}\dd \Mm \to F$. If this happens, for each
$A \in \Mm$ we define the \textit{integral} $\int_A g \dint{\mu}$ (of $g$ on $A$ with respect
to $\mu$) as $\bar{\nu}(A)$. Notice that in the above situation, the set function $\nu$ is always
a \textit{conditional} vector measure; that is, if $A_n \in \Dd(g)$ are pairwise disjoint and
$\bigcup_{n=1}^{\infty} A_n \in \Dd(g)$, then $\nu(\bigcup_{n=1}^{\infty} A_n) = \sum_{n=1}^{\infty}
\nu(A_n)$, which follows from \THM{bnc}. In particular, every bounded $\Mm$-measurable function is
integrable. One may show that integrable functions form a vector space and the integral $\int_A$
(with respect to $\mu$) is a linear operator (for each $A \in \Mm$). We will not develop this
concept here---this remark has only an introductory character.
\end{rem}

\section{Weak* i-measures}

This part is devoted to generalisation of the concept of i-measures to the context of weak*
topologies of dual Banach spaces and to give representations of continuous linear operators from
$C(X,E)$ into dual Banach spaces. To make the presentation simple and transparent, for $T \in
\llL(E,F^*)$ and $f \in F$ we shall write $\scalar{f}{T(\cdot)}$ to denote the functional $E \ni x
\mapsto (T x) f \in \KKK$.\par
We begin with

\begin{dfn}{w*}
A series $\sum_{n=1}^{\infty} T_n$ with summands in $\llL(E,F^*)$ is said to be
\textit{independently w*-convergent} if the series $\sum_{n=1}^{\infty} \scalar{f}{T_n(\cdot)}$
(of elements of $\llL(E,\KKK)$) is independently convergent for every $f \in F$. A \textit{weak*
i-measure} is a set function $\mu\dd \Mm \to \llL(E,F^*)$ (where $\Mm$ is a $\sigma$-algebra
on a set $Z$) if for any $f \in F$, the set function $\Mm \ni A \mapsto \scalar{f}{\mu(A)(\cdot)}
\in \llL(E,\KKK)$ is an i-measure. Equivalently, $\mu$ is a weak* i-measure iff
\begin{equation*}
\scalar{f}{\mu\Bigl(\bigcup_{n=1}^{\infty} A_n\Bigr)(\cdot)} = \sum_{n=1}^{\infty}
\scalar{f}{\mu(A_n)(\cdot)}
\end{equation*}
(for any $f \in F$) and the series $\sum_{n=1}^{\infty} \mu(A_n)$ is independently w*-convergent for
any collection of pairwise disjoint sets $A_n \in \Mm$. The \textit{total semivariation} $\|\mu\|_Z
\in [0,\infty]$ of a weak* i-measure is defined by the formula \eqref{eqn:semi}, as for i-measures.
\end{dfn}

As for i-measures, it turns out that

\begin{pro}{fin}
Every weak* i-measure has finite total semivariation.
\end{pro}

In the proof we shall need the following elementary result, whose proof is given for the sake
of completeness.

\begin{lem}{compl}
For any $\sigma$-algebra $\Mm$ on a set $Z$ and Banach spaces $E$ and $F$, the set
$\MmM(\Mm,\llL(E,F))$ is a Banach space when the algebraic operations are defined pointwise and
the norm is a function which assigns to each i-measure its total semivariation.
\end{lem}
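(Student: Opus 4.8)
The statement to prove is Lemma \ref{lem:compl}: that $\MmM(\Mm,\llL(E,F))$, the set of all i-measures on $\Mm$ with values in $\llL(E,F)$, equipped with pointwise operations and the total semivariation as norm, is a Banach space. I need to verify three things in order: that $\MmM(\Mm,\llL(E,F))$ is a vector space under pointwise operations, that $\|\cdot\|_Z$ is genuinely a norm on it (in particular finite-valued, which is exactly where \THM{fin} enters), and finally completeness.

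\textbf{Vector space structure and norm axioms.} First I would check that the set of i-measures is closed under pointwise addition and scalar multiplication. If $\mu,\nu$ are i-measures and $A_n\in\Mm$ are pairwise disjoint, then for each $x\in E$ one has $(\mu+\nu)(\bigcup A_n)x=\sum\mu(A_n)x+\sum\nu(A_n)x$, and the termwise sum of two independently convergent series is independently convergent (given a bounded sequence $x_n$, both $\sum\mu(A_n)x_n$ and $\sum\nu(A_n)x_n$ converge in norm, hence so does their sum); scalar multiples are handled identically. Thus $\MmM(\Mm,\llL(E,F))$ is a linear subspace of the space of all $\llL(E,F)$-valued set functions. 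Next, that $\|\cdot\|_Z$ is a seminorm is immediate from the defining supremum \eqref{eqn:semi}: subadditivity and absolute homogeneity follow because the finite sums $\sum_{n=1}^N(\mu+\nu)(A_n)x_n$ split as $\sum\mu(A_n)x_n+\sum\nu(A_n)x_n$. That it separates points (i.e.\ $\|\mu\|_Z=0$ forces $\mu=0$) follows by taking $N=1$: $\|\mu\|_Z\geqsl\|\mu(A)x\|$ for every $A\in\Mm$ and every unit vector $x$, so $\|\mu\|_Z=0$ gives $\mu(A)=0$ for all $A$. Crucially, by \THM{fin} every i-measure has finite total semivariation, so $\|\cdot\|_Z$ is a genuine (finite-valued) norm on the whole space.

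\textbf{Completeness, the main obstacle.} Let $(\mu^{(k)})_k$ be a Cauchy sequence in this norm. From the pointwise estimate $\|(\mu^{(k)}-\mu^{(l)})(A)x\|\leqsl\|\mu^{(k)}-\mu^{(l)}\|_Z$ (valid for every $A\in\Mm$, $\|x\|\leqsl 1$), I get that $(\mu^{(k)}(A)x)_k$ is Cauchy in $F$, hence convergent; I define $\mu(A)x$ to be its limit, and the convergence is in fact uniform over $A$ and over unit vectors $x$. Linearity and boundedness of $\mu(A)$ as an element of $\llL(E,F)$ follow from the uniform estimate, and subadditivity of the semivariation under this limit gives $\|\mu-\mu^{(k)}\|_Z\to 0$ once I know $\mu$ is actually an i-measure. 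The real work is verifying that the limiting set function $\mu$ is itself an i-measure: given pairwise disjoint $A_n\in\Mm$, I must show both that $\mu(\bigcup A_n)x=\sum\mu(A_n)x$ for each $x$ and that $\sum\mu(A_n)$ is independently convergent. This is where I expect the difficulty to concentrate, and the natural device is to exploit the \emph{uniformity} encoded in the semivariation norm: for any bounded sequence $(x_n)$ and any tail, the estimate
\begin{equation*}
\Bigl\|\sum_{n=p}^{q}\mu(A_n)x_n\Bigr\|\leqsl\Bigl\|\sum_{n=p}^{q}\mu^{(k)}(A_n)x_n\Bigr\|+\|\mu-\mu^{(k)}\|_Z
\end{equation*}
lets me make the tail of $\sum\mu(A_n)x_n$ small uniformly in $(x_n)$: first choose $k$ so the second term is below $\epsi/2$, then use independent convergence of the fixed i-measure $\mu^{(k)}$ to control the first term. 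This simultaneously delivers independent convergence of $\sum\mu(A_n)$ and, applying it with the fixed vector in each slot, countable additivity in the sense $\mu(\bigcup A_n)x=\sum\mu(A_n)x$. With $\mu$ confirmed to be an i-measure lying in $\MmM(\Mm,\llL(E,F))$ and $\mu^{(k)}\to\mu$ in semivariation, completeness is established.
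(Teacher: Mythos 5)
Your argument is correct and is essentially the paper's own proof: pointwise limit $\mu(A)\df\lim_k\mu^{(k)}(A)$, the uniform estimate $\bigl\|\sum_{n=p}^{q}(\mu-\mu^{(k)})(A_n)x_n\bigr\|\leqsl\|\mu-\mu^{(k)}\|_Z$ obtained by letting $l\to\infty$ in the Cauchy condition (which, note, gives $\|\mu-\mu^{(k)}\|_Z\to0$ as a numerical fact about the supremum \eqref{eqn:semi} without first knowing $\mu$ is an i-measure, so there is no circularity), and then a two-term tail estimate against a fixed $\mu^{(k)}$ to get independent convergence --- this is exactly the paper's display \eqref{eqn:fund} and its consequences. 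The one step you compress is countable additivity: taking $x_n=x$ in the tail estimate only shows that $\sum_n\mu(A_n)x$ \emph{converges}, and to identify its sum as $\mu(\bigcup_nA_n)x$ you must apply the same uniform bound to the single remainder set $B=\bigcup_{n>N}A_n$ (using finite additivity of $\mu$ and countable additivity of the fixed $\mu^{(k)}$), which is precisely what the paper does via \eqref{eqn:single}.
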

\begin{proof}
It is readily seen that $\MmM(\Mm,\llL(E,F))$ is a vector space and the function $\|\cdot\|_Z$ is
a norm (thanks to \THM{fin}). Take a Cauchy sequence of i-measures $\mu_n\dd \Mm \to \llL(E,F)$. For
any $A \in \Mm$ we have $\|\mu_n(A) - \mu_m(A)\| \leqsl \|\mu_n - \mu_m\|_Z$ and therefore $\mu(A)
\df \lim_{n\to\infty} \mu_n(A)$ is a well defined member of $\llL(E,F)$. In this way we have
obtained a set function $\mu\dd \Mm \to \llL(E,F)$. It is immediate that $\mu$ is finitely additive.
For any $\epsi > 0$, choose $\nu_{\epsi}$ such that
\begin{equation*}
\|\mu_n - \mu_m\|_Z \leqsl \frac12 \epsi
\end{equation*}
for all $n, m \geqsl \nu_{\epsi}$. Fix a countable collection of pairwise disjoint sets $A_k \in
\Mm$ and a sequence of vectors $x_k \in E$ whose norms do not exceed $1$. For $n, m \geqsl
\nu_{\epsi}$ and arbitrary $N$ and $M$ we have $\|\sum_{k=N}^{N+M} \mu_n(A_k) x_k - \mu_m(A_k) x_k\|
\leqsl \|\mu_n - \mu_m\|_Z \leqsl \frac12 \epsi$. So, letting $m \to \infty$, we get
\begin{equation}\label{eqn:fund}
\Bigl\|\sum_{k=N}^{N+M} \mu_n(A_k) x_k - \sum_{k=N}^{N+M} \mu(A_k) x_k\Bigr\| \leqsl \frac12 \epsi
\qquad (n \geqsl \nu_{\epsi}).
\end{equation}
This, in particular, yields that $\|\mu_n - \mu\|_Z \leqsl \frac12 \epsi$ for $n \geqsl \nu_{\epsi}$
and consequently $\lim_{n\to\infty} \|\mu_n - \mu\|_Z = 0$, provided $\mu$ is an i-measure. Further,
for $n = \nu_{\epsi}$ the series $\sum_{k=1}^{\infty} \mu_n(A_k) x_k$ is convergent, hence there is
$N_0$ such that $\|\sum_{k=N}^{N+M} \mu_n(A_k) x_k\| \leqsl \frac12 \epsi$ whenever $N \geqsl N_0$
and $M > 0$. This inequality, combined with \eqref{eqn:fund}, gives $\|\sum_{k=N}^{N+M} \mu(A_k)
x_k\| \leqsl \epsi$ for any $N \geqsl N_0$ and $M > 0$. We conclude that the series
$\sum_{k=1}^{\infty} \mu(A_k) x_k$ is convergent. Finally, when $x_k = x \in E$ for each $k$ (where
$\|x\| \leqsl 1$), $A_1 = B \in \Mm$ and $A_k = \varempty$ for all $k > 1$, \eqref{eqn:fund} gives
\begin{equation}\label{eqn:single}
\|\mu_n(B) x - \mu(B) x\| \leqsl \frac12 \epsi \qquad (n \geqsl \nu_{\epsi}).
\end{equation}
So, if (again) the sets $A_k$ are pairwise disjoint and $A \df \bigcup_{k=1}^{\infty} A_k$, then
for $n = \nu_{\epsi}$ there is $M$ such that $\|\mu_n(A \setminus \bigcup_{k=1}^N A_k) x\| \leqsl
\frac12 \epsi$ for any $N \geqsl M$. Putting $B = A \setminus \bigcup_{k=1}^N A_k$ (and $n =
\nu_{\epsi}$) in \eqref{eqn:single}, we deduce that $\|\mu(A \setminus \bigcup_{k=1}^N A_k) x\|
\leqsl \epsi$ for any $N \geqsl M$. Thus, $\lim_{n\to\infty} \|\mu(A \setminus \bigcup_{k=1}^n A_k)
x\| = 0$, which means that $\mu$ is countably additive and consequently $\mu \in
\MmM(\Mm,\llL(E,F))$.
\end{proof}

\begin{proof}[Proof of \PRO{fin}]
Let $\mu\dd \Mm \to \llL(E,F^*)$ be a weak* i-measure defined on a $\sigma$-algebra $\Mm$ of subsets
of a set $Z$. For any $f \in F$ define $\mu_f\dd \Mm \to \llL(E,\KKK)$ by $\mu_f(A) \df
\scalar{f}{\mu(A)(\cdot)}$. We infer from the definition of a weak* i-measure that $\mu_f \in
\MmM(\Mm,\llL(E,F))$ and from \LEM{compl} that $\MmM(\Mm,\llL(E,F))$ is a Banach space. So,
we conclude from the Closed Graph Theorem that a linear operator $\Phi\dd F \ni f \mapsto \mu_f \in
\MmM(\Mm,\llL(E,F))$ is continuous (it is obvious that the graph of $\Phi$ is closed). Hence,
$M \df \sup\{\|\mu_f\|_Z\dd\ f \in F,\ \|f\| \leqsl 1\} < \infty$. Now take a collection of $N$
pairwise disjoint sets $A_n \in \Mm$ and a corresponding system of vectors $x_n \in E$ whose norms
do not exceed $1$. Then
\begin{align*}
\Bigl\|\sum_{n=1}^N \mu(A_n) x_n\Bigr\| &= \sup\Bigl\{\Bigl|\sum_{n=1}^N (\mu(A_n) x_n)(f)\Bigr|\dd\
f \in F,\ \|f\| \leqsl 1\Bigr\}\\
&= \sup\Bigl\{\Bigl|\sum_{n=1}^N \mu_f(A_n) x_n\Bigr|\dd\ f \in F,\ \|f\| \leqsl 1\Bigr\} \leqsl M
\end{align*}
and thus $\|\mu\|_Z \leqsl M$.
\end{proof}

\begin{exm}{nonabs}
One may hope (being inspired by \COR{abs} and \PRO{fin}) that for every weak* i-measure $\mu$ there
is a nonnegative real-valued measure $\lambda$ such that $\mu$ vanishes on all sets on which
$\lambda$ vanishes. As the following example shows, in some cases this is very far from
the truth.\par
Let $Z$ be an uncountable set and $\Mm$ the $\sigma$-algebra of all subsets of $Z$. Let $E = \KKK$
and $F = \ell_1(Z,\KKK)$. Then $F^* = \ell_{\infty}(Z,\KKK)$. Further, for any set $A \in \Mm$ let
$\mu(A)\dd E \to F$ be given by $\mu(A)\lambda = \lambda j_A$ where, as usual, $j_A$ is
the characteristic function of $A$. We see that $\mu\dd \Mm \to \llL(E,F^*)$. Observe that $\mu(A) =
0$ iff $A = \varempty$ and thus there is no measure $\lambda\dd \Mm \to [0,\infty)$ for which $\mu
\ll \lambda$ (because $Z$ is uncountable). However, $\mu$ is a weak* i-measure, which may simply be
verified.
\end{exm}

Let $\mu\dd \Mm \to \llL(E,F^*)$ be a weak* i-measure defined on a $\sigma$-algebra $\Mm$ of subsets
of a set $Z$. For $f \in S_{\Mm}(Z,E)$, we define the \textit{weak* integral} $\int^{w*}_Z f
\dint{\mu}$ of $f$ with respect to $\mu$ as the right-hand side expression of \eqref{eqn:i-int},
understood in the weak* topology of $F^*$; that is,
\begin{equation*}
\Bigl(\int^{w*}_Z f \dint{\mu}\Bigr)(v) = \sum_{e \in E} \Bigl(\mu(f^{-1}(\{e\}))e\Bigr)(v) \qquad
(v \in F).
\end{equation*}
We see (as for i-measures) that the operator $L\dd S_{\Mm}(Z,E) \ni f \mapsto \int^{w*}_Z f
\dint{\mu} \in F^*$ is linear and continuous, and $\|L\| = \|\mu\|_Z$ (because the norm of $F^*$ is
lower semicontinuous with respect to the weak* topology). We extend the operator $L$ to the whole
$M_{\Mm}(Z,E)$ and for $f \in M_{\Mm}(Z,E)$ use $\int^{w*}_Z f \dint{\mu}$ to denote the value
at $f$ of the unique continuous extension of $L$, which is called the \textit{weak* integral} of $f$
(with respect to $\mu$). We see that $\|\int^{w*}_Z f \dint{\mu}\| \leqsl \|f\| \cdot \|\mu\|_Z$.
Note also that if the weak* i-measure is actually an i-measure, then $\int^{w*}_Z f \dint{\mu} =
\int_Z f \dint{\mu}$ for any $f \in M_{\Mm}(Z,E)$. We also have:

\begin{thm}[Bounded Weak* Convergence Theorem]{bw*c}
Let $\mu\dd \Mm \to \llL(E,F^*)$ be a weak* i-measure \textup{(}where $\Mm$ is a $\sigma$-algebra
on a set $Z$\textup{)}. If $f_n \in M_{\Mm}(Z,E)$ are uniformly bounded and converge pointwise
to $f\dd Z \to E$ in the weak topology of $E$, then $\int^{w*}_Z f_n \dint{\mu}$ converge
to $\int^{w*}_Z f \dint{\mu}$ in the weak* topology of $F^*$.
\end{thm}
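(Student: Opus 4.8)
The plan is to reduce the asserted weak* convergence in $F^*$ to ordinary scalar convergence of the pairings against vectors of $F$, and then to recognise each such pairing as an integral with respect to a genuine (scalar-valued) i-measure, so that the already-established \THM{bwc} applies verbatim. By the definition of the weak* topology of $F^*$, it suffices to show that $\bigl(\int^{w*}_Z f_n \dint{\mu}\bigr)(v)$ converges to $\bigl(\int^{w*}_Z f \dint{\mu}\bigr)(v)$ in $\KKK$ for every fixed $v \in F$, so I would fix such a $v$ at the outset. Here I would also record, as in \THM{bwc}, that the pointwise weak limit $f$ again lies in $M_{\Mm}(Z,E)$: it is bounded (weak lower semicontinuity of the norm together with the uniform bound on the $f_n$), separably valued, and weakly $\Mm$-measurable as a pointwise weak limit of such functions.

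The central step is to associate with $v$ the set function $\mu_v \dd \Mm \to \llL(E,\KKK) = E^*$ defined by $\mu_v(A) \df \scalar{v}{\mu(A)(\cdot)}$. By the very definition of a weak* i-measure (\DEF{w*}), $\mu_v$ is an $E^*$-valued i-measure, so the ordinary integral $\int_Z g \dint{\mu_v} \in \KKK$ is available for each $g \in M_{\Mm}(Z,E)$. I would then verify the identity
\[ \Bigl(\int^{w*}_Z g \dint{\mu}\Bigr)(v) = \int_Z g \dint{\mu_v} \qquad (g \in M_{\Mm}(Z,E)). \]
For $g \in S_{\Mm}(Z,E)$ this is immediate by unwinding both definitions term by term, using $\bigl(\mu(g^{-1}(\{e\}))e\bigr)(v) = \mu_v(g^{-1}(\{e\}))e$. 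Both sides of the displayed identity are continuous linear functionals of $g$ on $M_{\Mm}(Z,E)$ (the left-hand side by continuity of the weak* integral composed with evaluation at $v$, the right-hand side by \LEM{cont} and \DEF{VINT}), and they agree on the uniformly dense subspace $S_{\Mm}(Z,E)$; hence they coincide on all of $M_{\Mm}(Z,E)$.

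With this identity in hand the proof concludes by applying \THM{bwc} to the i-measure $\mu_v$, whose target Banach space is simply $\KKK$: since the $f_n$ are uniformly bounded and converge pointwise to $f$ in the weak topology of $E$, that theorem yields $\int_Z f_n \dint{\mu_v} \to \int_Z f \dint{\mu_v}$ in the weak topology of $\KKK$, which is just the usual topology. Combining this with the identity gives $\bigl(\int^{w*}_Z f_n \dint{\mu}\bigr)(v) \to \bigl(\int^{w*}_Z f \dint{\mu}\bigr)(v)$, and as $v \in F$ was arbitrary the desired weak* convergence follows. I do not anticipate a genuine obstacle, since the whole argument is a slice-by-$v$ transfer of \THM{bwc}; the only point demanding care is the middle step — confirming directly from \DEF{w*} that each $\mu_v$ is an i-measure, and that the two functionals truly agree on simple functions before extending by density. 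Everything else is routine bookkeeping.
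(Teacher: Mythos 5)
Your proposal is correct and follows exactly the paper's own argument: fix $v \in F$, observe that $A \mapsto \scalar{v}{\mu(A)(\cdot)}$ is an $E^*$-valued i-measure by the definition of a weak* i-measure, identify $\bigl(\int^{w*}_Z g \dint{\mu}\bigr)(v)$ with the ordinary integral against this slice (checking on $S_{\Mm}(Z,E)$ and extending by continuity), and then invoke \THM{bwc}. No differences worth noting.
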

\begin{proof}
Fix $v \in F$. We need to show that $(\int^{w*}_Z f_n \dint{\mu})(v)$ converge to $(\int^{w*}_Z f
\dint{\mu})(v)$. Define $\nu\dd \Mm \to \llL(E,\KKK) = E^*$ by $\nu(A) \df
\scalar{v}{\mu(A)(\cdot)}$. It follows from the definition of a weak* i-measure that $\nu$ is
an i-measure. What is more, $\|\nu\|_Z \leqsl \|\mu\|_Z$ and
\begin{equation}\label{eqn:m-n}
\Bigl(\int^{w*}_Z u \dint{\mu}\Bigr)(v) = \int_Z v \dint{\nu} \quad (\in \KKK)
\end{equation}
for any $u \in M_{\Mm}(Z,E)$ (this is clear for $u \in S_{\Mm}(Z,E)$ and for arbitrary $u$ follows
from the continuity in $u$ of both sides of \eqref{eqn:m-n}). So, the assertion of the theorem
follows from \eqref{eqn:m-n} and \THM{bwc} applied for $\nu$.
\end{proof}

In some cases weak* i-measures are automatically i-measures, as shown by

\begin{pro}{w*hide}
Let $W$ be a linear subspace of $F^*$ such that $W$ is sequentially closed in the weak* topology
of $F^*$ and any weak* vector measure $\nu\dd \Mm \to F^*$ whose range is contained in $W$ is
a vector measure. Then any weak* i-measure $\mu\dd \Mm \to \llL(E,W) \subset \llL(E,F^*)$ is
an i-measure. In particular, if $W$ is a linear subspace of $F^*$ that is sequentially closed
in the weak* topology and contains no isomorphic copy of $\ell_{\infty}^{\RRR}$, then every
$\llL(E,W)$-valued weak* i-measure is an i-measure.
\end{pro}
\begin{proof}
Fix an infinite collection of pairwise disjoint sets $A_n \in \Mm$ and a bounded sequence of vectors
$x_n \in E$. For each $f \in F$ define $\nu_f\dd \Mm \to \KKK$ by $\nu_f(B) \df \sum_{n=1}^{\infty}
(\mu(A_n \cap B) x_n)(f)$. Since the set functions $\Mm \ni B \mapsto (\mu(A_n \cap B) x_n)(f) \in
\KKK$ are measures, we see (e.g. by the Vitali-Hahn-Saks-Nikodym theorem; consult Theorem~8
on page~23 in \cite{d-u}) that $\nu_f$ is a measure as well. Consequently, the formula $(\nu(B))(f)
\df \nu_f(B)\ (B \in \Mm,\ f \in F)$ correctly defines a weak* vector measure $\nu\dd \Mm \to F^*$.
What is more, it follows from the definition of $\nu$ and the property that $W$ is sequentially
closed in the weak* topology of $F$ that $\nu(B) \in W$ for any $B \in \Mm$. Thus, $\nu$ is a vector
measure, which implies that the series $\sum_{n=1}^{\infty} \nu(A_n)$ is convergent in the norm
topology. But $\nu(A_n) = \mu(A_n) x_n$ and consequently $\sum_{n=1}^{\infty} \mu(A_n)$ is
independently convergent. Since, in addition, $\scalar{f}{\mu(\bigcup_{n=1}^{\infty} A_n)(\cdot)} =
\sum_{n=1}^{\infty} \scalar{f}{\mu(A_n)(\cdot)}\ (f \in F)$, we see that $\mu(\bigcup_{n=1}^{\infty}
A_n) f = \sum_{n=1}^{\infty} \mu(A_n) f\ (f \in F)$ and we are done.\par
An additional claim follows from a celebrated result due to Diestel and Faires \cite{d-f} (see also
\cite{die} or Theorem~2 on page~20 in \cite{d-u}) which implies that each $W$-valued weak* vector
measure is a vector measure provided $W$ contains no isomorphic copy of $\ell_{\infty}^{\RRR}$.
\end{proof}

The proofs of the next two results are skipped. The first of them immediately follows from
the definition of the weak* integral for elements of $S_{\Mm}(Z,E)$, while the second is
a consequence of \THM{bw*c} and (BC*).

\begin{pro}{sc}
Let $W$ be a linear subspace of $F^*$ that is sequentially closed in the weak* topology of $F^*$.
If $\mu\dd \Mm \to \llL(E,W) \subset \llL(E,F^*)$ is a weak* i-measure \textup{(}where $\Mm$ is
a $\sigma$-algebra of subsets of a set $Z$\textup{)}, then $\int^{w*}_Z f \dint{\mu} \in W$ for any
$f \in M_{\Mm}(Z,E)$.
\end{pro}

\begin{pro}{w*T}
Let $\mu\dd \Mm(X) \to \llL(E,W) \subset \llL(E,F^*)$ be a weak* i-measure \textup{(}where $W$ is
a linear subspace of $F^*$ that is sequentially closed in the weak* topology of $F^*$\textup{)}. Let
$T\dd C(X,E) \to W$ be given by $T f \df \int^{w*}_X f \dint{\mu}$ and let $\bar{T}\dd M(X,E) \to
F^*$ be as specified in \COR{C-M}. Then $\|T\| = \|\mu\|_X$ and
\begin{equation*}
\bar{T} f = \int^{w*}_X f \dint{\mu} \qquad (f \in M(X,E)).
\end{equation*}
\end{pro}

\begin{thm}{w*}
Let $W$ be a linear subspace of $F^*$ that is sequentially closed in the weak* topology of $F^*$.
For every continuous linear operator $T\dd C(X,E) \to W$ there exists a unique weak* i-measure
$\mu\dd \Mm(X) \to \llL(E,W) \subset \llL(E,F^*)$ for which
\begin{equation}\label{eqn:w*repr}
T f = \int^{w*}_X f \dint{\mu} \qquad (f \in C(X,E)).
\end{equation}
Moreover, $\|T\| = \|\mu\|_X$.
\end{thm}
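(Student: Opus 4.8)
The plan is to manufacture the measure by extending $T$ to $M(X,E)$ and reading off its values on indicator-type functions. Applying \COR{C-M} to $T$ (regarded as an operator into $F^*$), I obtain the unique continuous extension $\bar T\dd M(X,E)\to F^*$ satisfying (BC*) with $\|\bar T\|=\|T\|$. The construction in the proof of \THM{1'} realises every value of $\bar T$ as an iterated weak* limit of values of $T$, all of which lie in $W$; since $W$ is sequentially closed in the weak* topology, the same transfinite induction as in the proof of \PRO{vsc} shows that $\bar T$ in fact maps $M(X,E)$ into $W$. For $A\in\Mm(X)$ and $x\in E$ I write $j_A(\cdot)x\in S_{\Mm(X)}(X,E)$ for the function equal to $x$ on $A$ and to $0$ elsewhere, and set
\[
\mu(A)x \df \bar T\bigl(j_A(\cdot)x\bigr).
\]
Linearity of $\bar T$ makes $\mu(A)$ linear in $x$, and $\|\mu(A)x\|\leqsl\|T\|\,\|x\|$, so $\mu(A)\in\llL(E,W)$.

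The main work is to verify that $\mu$ is a weak* i-measure in the sense of \DEF{w*}. Fix $f\in F$. For pairwise disjoint $A_n\in\Mm(X)$ and any bounded sequence $x_n\in E$, consider the partial sums $g_N\df\sum_{n=1}^N j_{A_n}(\cdot)x_n\in S_{\Mm(X)}(X,E)$; they are uniformly bounded and converge pointwise in norm (hence weakly) to the function $g\in M(X,E)$ equal to $x_n$ on $A_n$ and to $0$ off $\bigcup_n A_n$. By (BC*), $\sum_{n=1}^N\mu(A_n)x_n=\bar T g_N$ converges to $\bar T g$ in the weak* topology of $F^*$; applying $f$ shows that $\sum_n(\mu(A_n)x_n)(f)$ converges in $\KKK$. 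Taking $x_n=x$ fixed gives the additivity $\scalar{f}{\mu(\bigcup_n A_n)(\cdot)}=\sum_n\scalar{f}{\mu(A_n)(\cdot)}$ (evaluated at any $x$), while the general bounded $(x_n)$ gives precisely the independent convergence of $\sum_n\scalar{f}{\mu(A_n)(\cdot)}$. Hence each $\scalar{f}{\mu(\cdot)(\cdot)}$ is an $\llL(E,\KKK)$-valued i-measure, i.e.\ $\mu$ is a weak* i-measure valued in $\llL(E,W)$.

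It remains to identify the integral and settle uniqueness. Directly from \DEF{vint} and the definition of the weak* integral, $\int^{w*}_X u\dint{\mu}=\sum_e\mu(u^{-1}(\{e\}))e=\bar T u$ for every $u\in S_{\Mm(X)}(X,E)$ (the finite case is linearity of $\bar T$, the countable case follows again from (BC*) applied to truncations). Since $S_{\Mm(X)}(X,E)$ is norm-dense in $M(X,E)$ and both $\bar T$ and $f\mapsto\int^{w*}_X f\dint{\mu}$ are norm-continuous, they coincide on all of $M(X,E)$; restricting to $C(X,E)$, where $\bar T=T$, yields $Tf=\int^{w*}_X f\dint{\mu}$, and the norm identity $\|T\|=\|\mu\|_X$ then follows from \PRO{w*T}. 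For uniqueness, if $\mu'$ is another representing weak* i-measure, then by \PRO{w*T} the extension furnished by \COR{C-M}, which is unique, equals $f\mapsto\int^{w*}_X f\dint{\mu'}$ on $M(X,E)$, whence $\mu'(A)x=\int^{w*}_X j_A(\cdot)x\dint{\mu'}=\bar T(j_A(\cdot)x)=\mu(A)x$ for all $A,x$, so $\mu'=\mu$. The only delicate point is the verification that $\mu$ is genuinely a weak* i-measure: everything hinges on (BC*) converting the pointwise-weak convergence of the simple functions $g_N$ into weak* convergence of their images, together with the observation that $\bar T$ never leaves $W$.
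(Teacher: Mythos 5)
Your proposal is correct and follows essentially the same route as the paper: extend $T$ to $\bar T\dd M(X,E)\to W$ using the sequential weak* closedness of $W$, define $\mu(A)x\df\bar T(j_A(\cdot)x)$, verify the weak* i-measure property by applying (BC*) to the partial sums $\sum_{k=1}^N j_{A_k}(\cdot)x_k$, and identify $\bar T$ with the weak* integral by density of $S_{\Mm(X)}(X,E)$, with uniqueness and the norm identity coming from \PRO{w*T}. No gaps.
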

\begin{proof}
Assume $T\dd C(X,E) \to W$ is a continuous linear operator. The uniqueness of $\mu$ as well as
the additional claim of the theorem immediately follow from \PRO{w*T}. We shall now show
the existence of $\mu$. We infer from the proof of \PRO{vsc} that $T$ extends to $\bar{T}\dd M(X,E)
\to W$ which satisfies (BC*). We define $\mu\dd \Mm(X) \to \llL(E,W)$ by the rule $\mu(A) x \df
\bar{T}(j_A(\cdot) x)$ where $j_A\dd X \to \{0,1\}$ is the characteristic function of $A$ (here
we also continue the notational convention introduced in the proof of \LEM{1}). It is easily seen
that $\mu(A) \in \llL(E,W)$. Assume $A_n \in \Mm(X)$ are pairwise disjoint and let $x_n \in E$ be
uniformly bounded. Put $s_N \df \sum_{k=1}^N j_{A_k}(\cdot) x_k\ (N=1,2,\ldots,\infty)$. Notice that
the functions $s_n$ are uniformly bounded and converge pointwise (in the norm topology of $E$)
to $s_{\infty}$. So, it follows from (BC*) that the functionals $\sum_{k=1}^n \mu(A_k) x_k = \bar{T}
s_n$ converge to $\bar{T} s_{\infty}$ in the weak* topology of $F^*$. This implies that the series
$\sum_{k=1}^{\infty} \mu(A_k)$ is independently weak* convergent. What is more, if $x_k = x \in E$
for each $k$, then, under the above notations, $s_{\infty} = j_{\bigcup_{k=1}^{\infty} A_k}(\cdot)
x$ and we see that the series $\sum_{n=1}^{\infty} \mu(A_k) x$ converges in the weak* topology
of $F^*$ to $\bar{T} s_{\infty} = \mu(\bigcup_{k=1}^{\infty} A_k) x$. We conclude that $\mu$ is
a weak* i-measure. Finally, putting $L f \df \int^{w*}_X f \dint{\mu}$ for $f \in M(X,E)$, we see
that $L\dd M(X,E) \to F^*$ and $\bar{T}$ are two continuous functions which coincide
on $S_{\Mm(X)}(X,E)$. Since this last space is dense in $M(X,E)$, we conclude that $L = \bar{T}$ and
thus \eqref{eqn:w*repr} holds.
\end{proof}

The proof of the next result is omitted.

\begin{cor}{w*}
Let $F_{sc}$ be the smallest linear subspace of $F^{**}$ that contains $F$ and is sequentially
closed in the weak* topology of $F^{**}$. For every continuous linear operator $T\dd C(X,E) \to F$
there is a \textup{(}unique\textup{)} weak* i-measure $\mu\dd \Mm(X) \to \llL(E,F_{sc}) \subset
\llL(E,F^{**})$ for which \eqref{eqn:w*repr} holds.
\end{cor}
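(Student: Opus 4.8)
The plan is to reduce \COR{w*} to \THM{w*} by passing to the bidual. First I would recall that the canonical evaluation map $\kappa\dd F \to F^{**}$, given by $(\kappa(y))(\psi) \df \psi(y)$ for $y \in F$ and $\psi \in F^*$, is a linear isometric embedding. By the very definition of $F_{sc}$ (the smallest linear subspace of $F^{**}$ that contains $F$ and is sequentially closed in the weak* topology of $F^{**}$; compare \COR{w*sc}) the space $F_{sc}$ is well defined, since an intersection of linear subspaces that are sequentially closed in the weak* topology is again a linear subspace with the same closure property; and clearly $\kappa(F) \subset F_{sc}$.

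Next I would view the given operator through $\kappa$. As $\kappa$ is isometric, the composition $\tilde{T} \df \kappa \circ T$ is a continuous linear operator from $C(X,E)$ into $F_{sc}$ with $\|\tilde{T}\| = \|T\|$. The crucial point of bookkeeping is that $F^{**} = (F^*)^*$ is the dual of the Banach space $F^*$, and $F_{sc}$ is, by construction, a linear subspace of this dual that is sequentially closed in its weak* topology.

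I would then apply \THM{w*} verbatim, with the Banach space called $F$ in that statement now played by $F^*$ (so that $F^{**}$ plays the role of the ambient dual space) and with $W \df F_{sc}$. This produces a unique weak* i-measure $\mu\dd \Mm(X) \to \llL(E,F_{sc}) \subset \llL(E,F^{**})$ such that $\tilde{T} f = \int^{w*}_X f \dint{\mu}$ for every $f \in C(X,E)$. Identifying $F$ with its canonical image $\kappa(F) \subset F_{sc}$, this is precisely \eqref{eqn:w*repr}, and the uniqueness of $\mu$ is inherited directly from the uniqueness assertion of \THM{w*}.

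I do not expect a genuine obstacle here: the entire content is the correct handling of the two layers of duality, namely recognising $F_{sc} \subset F^{**}$ as a sequentially weak*-closed subspace of the dual of $F^*$ so that \THM{w*} becomes applicable, and observing that the canonical embedding is isometric and lands inside $F_{sc}$. The only point warranting a moment's care is confirming that $F_{sc}$ is a legitimate choice of $W$, i.e. that the ``smallest'' such subspace exists; this holds because the family of sequentially weak*-closed linear subspaces of $F^{**}$ containing $F$ is nonempty (it contains $F^{**}$ itself) and closed under arbitrary intersection.
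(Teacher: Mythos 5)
Your argument is correct and is exactly the intended one: the paper omits the proof of \COR{w*}, but it is the immediate application of \THM{w*} with $F^*$ playing the role of the predual (so that $F^{**}=(F^*)^*$ is the ambient dual and $W=F_{sc}$ is a weak*-sequentially closed subspace of it), after composing $T$ with the canonical isometric embedding of $F$ into $F^{**}$, whose image lies in $F_{sc}$. Your side remarks --- that $F_{sc}$ exists as an intersection of weak*-sequentially closed subspaces, and that \eqref{eqn:w*repr} is read modulo the identification of $F$ with its canonical image --- are the right points to check, and there is no gap.
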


\begin{pro}{VSC}
Let $F$ be a vsc Banach space that contains no isomorphic copy of $\ell_{\infty}^{\RRR}$. For every
continuous linear operator $T\dd C(X,E) \to F$ there exists a unique i-measure $\mu\dd \Mm(X) \to
\llL(E,F)$ such that \eqref{eqn:vint} holds.
\end{pro}
\begin{proof}
We start from the existence part. There exists a linear isomorphism $\Phi\dd F \to W \subset Z^*$
such that $W$ is a linear subspace of a dual Banach space $Z^*$ that is sequentially closed
in the weak* topology (see \PRO{vsc-dual}). It follows from \THM{w*} that there is a weak* i-measure
$\nu\dd \Mm(X) \to \llL(E,W)$ such that
\begin{equation}\label{eqn:trm}
(\Phi \circ T) f = \int^{w*}_X f \dint{\nu} \qquad (f \in C(X,E)).
\end{equation}
Since $F$ contains no isomorphic copy of $\ell_{\infty}^{\RRR}$, so does $W$ and \PRO{w*hide}
implies that $\nu$ is an i-measure. We define $\mu\dd \Mm(X) \to \llL(E,F)$ by $\mu(A) \df \Phi^{-1}
\circ \nu(A)$. Straightforward calculations shows that $\mu$ is also an i-measure. Moreover, for
$u \in S_{\Mm(X)}(X,E)$ one simply has
\begin{equation}\label{eqn:tri}
\int_X u \dint{\mu} = \Phi^{-1}\Bigl(\int_X u \dint{\nu}\Bigr)
\end{equation}
and thus \eqref{eqn:tri} holds for all $u \in M(X,E)$. Consequently, \eqref{eqn:tri} and
\eqref{eqn:trm} yield \eqref{eqn:vint}.\par
To establish the uniqueness of $\mu$, it is enough to check that if $\lambda\dd \Mm(X) \to
\llL(E,F)$ is an i-measure such that $\int_X f \dint{\lambda} = 0$ for each $f \in C(X,E)$, then
$\lambda = 0$. But this simply follows from \THM{bwc} and the characterisation of $M(X,E)$ given
in \LEM{1}.
\end{proof}

\begin{rem}{marg}
\PRO{extend} and \LEM{norm} will show that, under the notation of \PRO{VSC}, $\|\mu\|_X = \|T\|$.
\end{rem}

Taking into account the characterisation of wsc Banach spaces formulated in \PRO{wsc}, the following
result is a little bit surprising.

\begin{cor}{vsc}
Let $F$ be a vsc Banach space that contains no isomorphic copy of $\ell_{\infty}^{\RRR}$. Every
continuous linear operator $T\dd C(X,E) \to F$ admits a unique linear extension $\bar{T}\dd M(X,E)
\to F$ such that \textup{(BC)} holds with $M(X,E)$ inserted in place of $\mmM(V)$. Moreover,
$\bar{T}$ is continuous and $\|\bar{T}\| = \|T\|$.
\end{cor}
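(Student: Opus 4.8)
The plan is to realise $\bar T$ as an honest vector integral against the i-measure supplied by \PRO{VSC}, and then to read off every claimed property from the bounded convergence theorem \THM{bwc} and from the characterisation $M(X,E)=\mmM(C(X,E))$ of \LEM{1}. First I would invoke \PRO{VSC}: since $F$ is vsc and contains no isomorphic copy of $\ell_{\infty}^{\RRR}$, there is a unique i-measure $\mu\dd \Mm(X)\to\llL(E,F)$ with $T f=\int_X f\dint{\mu}$ for all $f\in C(X,E)$. I then simply set $\bar T f\df \int_X f\dint{\mu}$ for every $f\in M(X,E)=M_{\Mm(X)}(X,E)$; this is legitimate because \DEF{VINT} makes $\int_X f\dint{\mu}$ meaningful for each bounded $\Mm(X)$-measurable function and assigns to it a value in $F$ (not merely in some larger dual, which is precisely what the omission of $\ell_{\infty}^{\RRR}$ secures through \PRO{VSC}). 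By \DEF{VINT} the map $\bar T$ is linear and continuous, and \eqref{eqn:vint} shows that $\bar T$ extends $T$.

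Condition (BC) is then immediate from \THM{bwc}: whenever $f_n\in M(X,E)$ are uniformly bounded and converge pointwise to $f\in M(X,E)$ in the weak topology of $E$, we have $\bar T f_n=\int_X f_n\dint{\mu}\to\int_X f\dint{\mu}=\bar T f$ in the weak topology of $F$. For the norm, $\bar T$ is by construction the continuous extension to $M(X,E)$ of the operator $S_{\Mm(X)}(X,E)\ni f\mapsto\int_X f\dint{\mu}$, which by \LEM{cont} has norm $\|\mu\|_X$; since $S_{\Mm(X)}(X,E)$ is dense in $M(X,E)$ this gives $\|\bar T\|=\|\mu\|_X$. Combining this with the equality $\|\mu\|_X=\|T\|$ recorded in \REM{marg} yields $\|\bar T\|=\|T\|$.

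Uniqueness is where \LEM{1} does the work. Suppose $\bar T'\dd M(X,E)\to F$ is another linear extension of $T$ satisfying (BC), and consider $N\df\{f\in M(X,E)\dd\ \bar T f=\bar T' f\}$. Then $N$ is a linear subspace containing $C(X,E)$, so it satisfies (M0) for $A=C(X,E)$. If $f_n\in N$ are uniformly bounded and converge pointwise to some $f$ in the weak topology of $E$, then $f\in M(X,E)$ because $M(X,E)=\mmM(C(X,E))$ itself satisfies (M1); applying (BC) to both $\bar T$ and $\bar T'$ and using that weak limits in $F$ are unique gives $\bar T f=\lim_{n\to\infty}\bar T f_n=\lim_{n\to\infty}\bar T' f_n=\bar T' f$, so $f\in N$. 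Thus $N$ satisfies (M0)--(M1) for $A=C(X,E)$, whence $\mmM(C(X,E))\subset N$, and \LEM{1} forces $N=M(X,E)$, i.e.\ $\bar T=\bar T'$. The only non-formal inputs are the existence of the $\llL(E,F)$-valued i-measure (\PRO{VSC}, which internally uses \PRO{w*hide} and the no-$\ell_{\infty}^{\RRR}$ hypothesis) and the weak convergence delivered by \THM{bwc}; granting those, the point to watch is merely that all integrals and limits remain inside $F$, and that the norm identity ultimately rests on \REM{marg}.
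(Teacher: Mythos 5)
Your proposal is correct and follows essentially the same route as the paper: existence via the i-measure from \PRO{VSC} and the definition $\bar{T}f\df\int_X f\dint{\mu}$, condition (BC) via \THM{bwc}, the norm identity via \LEM{cont} and \REM{marg}, and uniqueness via \LEM{1} together with (BC). You have merely spelled out the details (in particular the (M0)--(M1) argument for uniqueness) that the paper leaves as ``as usual''.
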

\begin{proof}
Uniqueness, as usual, follows from \LEM{1} and (BC). To establish the existence, apply \PRO{VSC}
to get an i-measure $\mu$ such that \eqref{eqn:vint} holds and $\|\mu\|_X = \|T\|$ (see \REM{marg}).
Then define $\bar{T}\dd M(X,E) \to F$ by $\bar{T} f \df \int_X f \dint{\mu}$ and use \THM{bwc}
to show (BC).
\end{proof}

\begin{exm}{linfty}
Let us show that the assumption in \PRO{VSC} that $F$ contains no isomorphic copy
of $\ell_{\infty}^{\RRR}$ is essential. Put $F \df \ell_{\infty}^{\KKK}$. Since $F$ is a dual Banach
space, it is vsc. Now let $T\dd C([0,1],\KKK) \to F$ be given by $T f \df
(f(\frac1n))_{n=1}^{\infty}$. It is an elementary exercise to find a uniformly bounded sequence
of functions $f_n \in C([0,1],\KKK)$ which converge pointwise to $0$ but the vectors $T f_n$ diverge
in the norm topology. This is contradictory to \THM{bnc}, whose assertion has to be true for any
operator $T$ for which \PRO{VSC} holds.
\end{exm}

\begin{rem}{w*integrable}
As for i-measures, we wish to introduce the concept of integration of (possibly unbounded) functions
with respect to weak* i-measures. Let $\mu\dd \Mm \to \llL(E,F^*)$ be a weak* i-measure on a set $Z$
and $g\dd Z \to E$ an $\Mm$-measurable function (see \DEF{measurable}). Let $\Dd(g)$ be as specified
in \REM{integrable}. Define $\nu\dd \Dd(g) \to F^*$ by $\nu(A) \df \int^{w*}_Z j_A g \dint{\mu}$.
The function $g$ is said to be \textit{weak* integrable} if the set function $\nu\dd \Dd(g) \ni A
\mapsto \int^{w*}_Z j_A g \dint{\mu} \in F^*$ extends to a (necessarily unique) weak* vector measure
$\bar{\nu}\dd \Mm \to F^*$. If this happens, for each $A \in \Mm$ we define the \textit{weak*
integral} $\int^{w*}_A g \dint{\mu}$ (of $g$ on $A$ with respect to $\mu$) as $\bar{\nu}(A)$.
In the above situation, the set function $\nu$ is always a \textit{conditional} weak* vector
measure, which follows from \THM{bw*c}. Thus, every bounded $\Mm$-measurable function is weak*
integrable. Weak* integrable functions form a vector space and the weak* integral $\int_A$ (with
respect to $\mu$) is a linear operator (for each $A \in \Mm$).
\end{rem}

\section{Regularisation of i-measures}

In this section $Y = \Omega \sqcup \{\infty\}$ is a one-point compactification of $\Omega$.

\begin{dfn}{regular}
An i-measure $\mu$ defined on $\Bb(\Omega)$ is said to be \textit{regular} if every set $A \in
\Bb(\Omega)$ includes a $\sigma$-compact set $K$ such that $\mu$ vanishes on every Borel set
contained in $K \setminus A$.
\end{dfn}

It is an easy task to check that all regular i-measures form a linear subspace, to be denoted
by $\MmM_r(\Bb(\Omega),\llL(E,F))$, of $\MmM(\Bb(\Omega),\llL(E,F))$.\par
What we mean by a \textit{regularisation} of an i-measure is the property formulated below.

\begin{pro}{extend}
Every i-measure $\mu\dd \Mm(X) \to \llL(E,F)$ is uniquely extendable to a regular i-measure
$\bar{\mu}\dd \Bb(X) \to \llL(E,F)$. What is more, $\|\bar{\mu}\|_X = \|\mu\|_X$ and there exists
a regular measure $\bar{\lambda}\dd \Bb(X) \to [0,\infty)$ such that $\bar{\mu} \ll \bar{\lambda}$.
\end{pro}
\begin{proof}
Let $\lambda\dd \Mm(X) \to [0,\infty)$ be a measure such that $\mu \ll \lambda$ (see \COR{abs}).
Then $\lambda$ extends uniquely to a regular measure $\bar{\lambda}\dd \Bb(X) \to [0,\infty)$ (this
property may simply be concluded from the Riesz characterisation theorem applied for the linear
functional given by $C(X,\KKK) \ni f \mapsto \int_X f \dint{\lambda} \in \KKK$). The measure
$\bar{\lambda}$ has the following property:
\begin{itemize}
\item[($**$)] for any set $A \in \Bb(X)$ there exists a set $A^{\#} \in \Mm(X)$ such that
 $\bar{\lambda}(A \setminus A^{\#}) = \bar{\lambda}(A^{\#} \setminus A) = 0$.
\end{itemize}
Notice also that if $A$ and $A^{\#}$ are as specified above and $A^{\#\#} \in \Mm(X)$ is such that
$\bar{\lambda}(A \setminus A^{\#\#}) = \bar{\lambda}(A^{\#\#} \setminus A) = 0$, then $\mu(A^{\#}) =
\mu(A^{\#\#})$ (because $\lambda(A^{\#} \setminus A^{\#\#}) = \lambda(A^{\#\#} \setminus A^{\#}) =
0$ and $\mu \ll \lambda$). This observation means that the formula $\bar{\mu}(A) \df \mu(A^{\#})$
where, for $A \in \Bb(X)$, $A^{\#}$ is as specified in ($**$) correctly defines a set function
$\bar{\mu}\dd \Bb(X) \to \llL(E,F)$, which extends $\mu$. Now take a sequence of pairwise disjoint
sets $A_n \in \Bb(X)$. We can find a sequence of \textit{pairwise disjoint} sets $A_n^{\#} \in
\Mm(X)$ for which ($**$) is satisfied with $A_n$ and $A_n^{\#}$ inserted in place of $A$ and
$A^{\#}$ (respectively). Consequently, the series $\sum_{n=1}^{\infty} \mu(A_n)$ is independently
convergent, $\bar{\mu}(A_n) = \mu(A_n^{\#})$ for each $n$ and $\bar{\mu}(\bigcup_{n=1}^{\infty} A_n)
= \mu(\bigcup_{n=1}^{\infty} A_n^{\#})$, which implies that $\bar{\mu}$ is an i-measure and
$\|\bar{\mu}\|_X = \|\mu\|_X$.\par
Further, if $\bar{\lambda}(A) = 0$ and $A^{\#}$ is as specified in ($**$), then $\lambda(A^{\#}) =
0$ and, consequently, $\bar{\mu}(A) = \mu(A^{\#}) = 0$. This shows that $\bar{\mu} \ll
\bar{\lambda}$ (see \COR{abs-abs}). Finally, for any $A \in \Bb(X)$ one can find a $\sigma$-compact
set $K \subset A$ such that $\bar{\lambda}(A \setminus K) = 0$ and hence $\bar{\mu}$ vanishes
on each Borel subset of $A \setminus K$. This proves that $\bar{\mu}$ is regular.\par
To establish the uniqueness of $\bar{\mu}$, assume $\bar{\mu}'\dd \Bb(X) \to \llL(E,F)$ is another
regular i-measure extending $\mu$. For each $e \in E$ and $\psi \in F^*$, we define
$\bar{\mu}_{e,\psi}\dd \Bb(X) \to \KKK$ (and similarly $\bar{\mu}'_{e,\psi}\dd \Bb(X) \to \KKK$)
by $\bar{\mu}_{e,\psi}(A) \df (\psi \circ \bar{\mu}(A))(e)$. It follows from the regularity
of $\bar{\mu}$ and $\bar{\mu}'$ that $\bar{\mu}_{e,\psi}$ and $\bar{\mu}'_{e,\psi}$ are regular
scalar-valued measures. But both these scalar-valued measures coincide on $\Mm(X)$ and hence
$\bar{\mu}'_{e,\psi} = \bar{\mu}_{e,\psi}$ (thanks to the Riesz characterisation theorem).
Consequently, $\bar{\mu}' = \bar{\mu}$.
\end{proof}

As in \PRO{extend}, we denote by $\bar{\mu}$ the unique regular Borel i-measure which extends
an i-measure $\mu$ defined on $\Mm(X)$.

\begin{cor}{reg}
For an i-measure $\mu\dd \Bb(X) \to \llL(E,F)$, \tfcae
\begin{enumerate}[\upshape(i)]
\item $\mu$ is regular;
\item for any $e \in E$ and $\psi \in F^*$, the scalar-valued measure $\mu_{e,\psi}\dd \Bb(X) \ni A
 \mapsto (\psi \circ \mu(A))(e) \in \KKK$ is regular;
\item there exists a regular measure $\rho\dd \Bb(X) \to [0,\infty)$ such that $\mu \ll \rho$;
\item there exists a regular measure $\lambda\dd \Bb(X) \to [0,\infty)$ such that $\mu \ll \lambda$
 and $\lambda(A) \leqsl \|\mu\|_A$ for each $A \in \Bb(X)$.
\end{enumerate}
\end{cor}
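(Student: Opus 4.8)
The plan is to run the cycle (iv) $\Rightarrow$ (iii) $\Rightarrow$ (i) $\Rightarrow$ (iv) together with the separate equivalence (i) $\Leftrightarrow$ (ii); essentially all of the content is carried by \COR{abs}, \PRO{extend} and the uniqueness clause of the Riesz characterisation theorem. The implication (iv) $\Rightarrow$ (iii) is immediate, since (iv) is just (iii) reinforced by the pointwise estimate $\lambda(A) \leqsl \|\mu\|_A$. So it remains to treat (iii) $\Rightarrow$ (i), (i) $\Rightarrow$ (iv), (i) $\Rightarrow$ (ii) and (ii) $\Rightarrow$ (i), and the interesting work lies in the first two of these.

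For (iii) $\Rightarrow$ (i) I would fix $A \in \Bb(X)$ and use inner regularity of the finite measure $\rho$ to choose compacta $K_1 \subset K_2 \subset \ldots \subset A$ with $\rho(K) = \rho(A)$, where $K \df \bigcup_n K_n$ is $\sigma$-compact; thus $\rho(A \setminus K) = 0$. Condition (ac) then forces $\|\mu\|_B = 0$ for every Borel $B \subset A \setminus K$, since $\rho(B) = 0 \leqsl \delta(\epsi)$ for all $\epsi > 0$; consequently $\mu$ vanishes on every such $B$, which is exactly regularity in the sense of \DEF{regular}.

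The implication (i) $\Rightarrow$ (iv) is where I expect the only real difficulty, and it is precisely the demand that the dominating regular measure simultaneously obey $\lambda(A) \leqsl \|\mu\|_A$. The temptation to build a regular dominating measure first (say through \PRO{extend}) and afterwards bound it by $\|\mu\|$ collides with the mismatch between $\Mm(X)$ and $\Bb(X)$ on non-metrisable $X$, where open sets need not belong to $\Mm(X)$. The clean way around this is to reverse the order: apply \COR{abs} to $\mu$ regarded as an i-measure on the $\sigma$-algebra $\Bb(X)$, obtaining a finite measure $\lambda \dd \Bb(X) \to [0,\infty)$ with $\mu \ll \lambda$ and $\lambda(A) \leqsl \|\mu\|_A$ for every $A$ (so the estimate holds throughout $\Bb(X)$ automatically), and only then read off regularity of $\lambda$ from (i). Indeed, for Borel $A$ pick, as in \DEF{regular}, increasing compacta $K_n$ with $K = \bigcup_n K_n \subset A$ on whose complement in $A$ the measure $\mu$ vanishes; then $\|\mu\|_{A \setminus K} = 0$, whence $\lambda(A \setminus K) = 0$ and $\lambda(A) = \lim_n \lambda(K_n)$. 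Thus $\lambda$ is inner regular by compacta, and a finite Borel measure on the compact space $X$ that is inner regular by compacta is regular (outer regularity follows by complementation). This yields (iv).

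For (i) $\Leftrightarrow$ (ii), the forward direction is the scalarisation of \DEF{regular}: if $\mu$ vanishes on all Borel subsets of $A \setminus K$, then so does the variation of $\mu_{e,\psi}$, giving inner regularity, hence regularity, of each finite measure $\mu_{e,\psi}$. For the converse I would compare $\mu$ with its canonical regular extension. Writing $\mu_0 \df \mu\bigr|_{\Mm(X)}$, \PRO{extend} supplies a regular i-measure $\bar{\mu}_0 \dd \Bb(X) \to \llL(E,F)$ extending $\mu_0$; by the already-proved (i) $\Rightarrow$ (ii) its scalarisations $(\bar{\mu}_0)_{e,\psi}$ are regular, while (ii) gives regularity of the $\mu_{e,\psi}$, and the two families agree on $\Mm(X)$. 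Since a regular scalar Borel measure on $X$ is determined by its restriction to $\Mm(X)$ (continuous functions being $\Mm(X)$-measurable, this is the uniqueness clause of the Riesz theorem, exactly as in the proof of \PRO{extend}), we conclude $\mu_{e,\psi} = (\bar{\mu}_0)_{e,\psi}$ for all $e \in E$ and $\psi \in F^*$, and therefore $\mu = \bar{\mu}_0$ is regular.
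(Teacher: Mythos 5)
Your proof is correct, and it rests on the same two pillars as the paper's own argument --- \COR{abs} together with the estimate \eqref{eqn:mut-abs}, and the uniqueness argument from \PRO{extend} (Riesz uniqueness applied to the scalarisations $\mu_{e,\psi}$) for (ii)$\implies$(i) --- but it organises the cycle differently, and in a slightly leaner way. The paper closes the loop as (i)$\implies$(iii)$\implies$(iv): it first invokes \PRO{extend} (applied to $\mu|_{\Mm(X)}$, together with uniqueness of the regular extension) to produce a regular dominating measure $\rho$, and then upgrades (iii) to (iv) by observing that the measure $\lambda$ supplied by \COR{abs} vanishes exactly where $\mu$ does, hence is absolutely continuous with respect to $\rho$ and inherits its regularity from $\rho$. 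You instead go straight from (i) to (iv): you take $\lambda$ from \COR{abs} on all of $\Bb(X)$ and read its inner regularity directly off the regularity of $\mu$ through $\lambda(A \setminus K) \leqsl \|\mu\|_{A \setminus K} = 0$, closing with the standard remark that a finite inner regular Borel measure on a compact space is regular. This bypasses \PRO{extend} entirely for that leg of the argument (you still use it, correctly, for (ii)$\implies$(i)) and also spells out (iii)$\implies$(i), which the paper dismisses as clear. Both routes are sound; yours trades the absolute-continuity transfer of regularity from $\rho$ to $\lambda$ for a direct inner-approximation argument, which is arguably more elementary and self-contained.
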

\begin{proof}
Implications (iv)$\implies$(iii)$\implies$(i)$\implies$(ii) are clear. Further, it follows from
\PRO{extend} applied for the i-measure $\mu\bigr|_{\Mm(X)}$ that (iii) follows from (i), and that
(i) is implied by (ii) (see the proof of the uniqueness part in \PRO{extend}). So, it remains
to check that (iii) implies (iv). Let $\rho$ and $\lambda$ be as specified in (iii) and \COR{abs},
respectively. We may assume $\lambda$ satisfies \eqref{eqn:mut-abs}. It remains to check that
$\lambda$ is regular, which simply follows from the fact that $\lambda \ll \rho$ (because $\lambda$
vanishes precisely on those sets on which $\mu$ vanishes---see the proof of \COR{abs-abs}).
\end{proof}

The proof of the next (very simple) result is left to the reader.

\begin{lem}{one-point}
An i-measure $\mu\dd \Bb(Y) \to \llL(E,F)$ is regular iff $\nu \df \mu\bigr|_{\Bb(\Omega)}$, treated
as an i-measure on $\Omega$, is regular.
\end{lem}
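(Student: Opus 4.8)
The plan is to exploit the elementary topological relationship between $Y$ and $\Omega$ supplied by the one-point compactification. First I would record three facts. Since $\Omega$ is open in $Y$ (and $\{\infty\}$ is closed), one has $\Bb(\Omega) = \{A \in \Bb(Y)\dd\ A \subset \Omega\}$, so that $\mu$ and $\nu$ agree on every member of $\Bb(\Omega)$. Moreover, because compactness is intrinsic, a subset of $\Omega$ is compact in $\Omega$ iff it is compact in $Y$; hence the $\sigma$-compact subsets of $\Omega$ are precisely the $\sigma$-compact subsets of $Y$ that are contained in $\Omega$. Finally, $\{\infty\}$ is compact in $Y$. Throughout, regularity of an i-measure on $\Bb(Y)$ is understood in the sense of \DEF{regular} applied to the (compact, hence locally compact) space $Y$ in place of $\Omega$: for each Borel $A$ there is a $\sigma$-compact $K \subset A$ such that the i-measure vanishes on every Borel subset of $A \setminus K$.

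For the forward implication I would assume $\mu$ regular and fix $A \in \Bb(\Omega)$. Regarding $A$ as a member of $\Bb(Y)$, regularity of $\mu$ yields a $\sigma$-compact $K \subset A$ with $\mu$ vanishing on every Borel subset of $A \setminus K$. But $K \subset A \subset \Omega$, so $K$ is $\sigma$-compact in $\Omega$, while $A \setminus K \subset \Omega$ forces every Borel subset of $A \setminus K$ to lie in $\Bb(\Omega)$, where $\nu = \mu$. Thus $\nu$ vanishes on all such sets, and $\nu$ is regular.

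For the converse I would assume $\nu$ regular and fix $A \in \Bb(Y)$, distinguishing two cases. If $\infty \notin A$, then $A \subset \Omega$, so $A \in \Bb(\Omega)$, and the $\sigma$-compact $K \subset A$ provided by regularity of $\nu$ works verbatim for $\mu$: it is $\sigma$-compact in $Y$, and $\mu = \nu$ on the Borel subsets of $A \setminus K \subset \Omega$. If $\infty \in A$, write $A_0 \df A \cap \Omega \in \Bb(\Omega)$, apply regularity of $\nu$ to obtain a $\sigma$-compact $K_0 \subset A_0$ off which $\nu$ is negligible, and set $K \df K_0 \cup \{\infty\}$. Since $\{\infty\}$ is compact, $K$ is $\sigma$-compact in $Y$ and $K \subset A$; because $\infty \in K$ one has $A \setminus K = A_0 \setminus K_0 \subset \Omega$, so $\mu = \nu$ vanishes on all its Borel subsets, and $\mu$ is regular.

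The only point requiring any care is the appearance of the extra point $\infty$ in the converse, and it is harmless: since $\infty$ already belongs to $A$, it may simply be absorbed into the $\sigma$-compact set $K$, after which it never enters the residual set $A \setminus K$. I therefore expect no genuine obstacle; the whole argument is a routine transfer of the inner-regularity condition across the open inclusion $\Omega \subset Y$.
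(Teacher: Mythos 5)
Your proof is correct; the paper omits the argument entirely (``left to the reader''), and what you write is precisely the intended routine verification: transfer the inner-regularity condition across the identification $\Bb(\Omega)=\{A\in\Bb(Y)\dd A\subset\Omega\}$, and in the converse absorb the compact singleton $\{\infty\}$ into the $\sigma$-compact set $K$ so that the residual set $A\setminus K$ lies in $\Omega$, where $\mu$ and $\nu$ agree.
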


\begin{lem}{norm}
Let $\mu\dd \Bb(\Omega) \to \llL(E,F)$ be a regular i-measure. Then $\|T_{\nu}\| = \|\nu\|_{\Omega}$
where
\begin{equation}\label{eqn:Tnu}
T_{\nu}\dd C_0(\Omega,E) \ni f \mapsto \int_{\Omega} f \dint{\nu} \in F.
\end{equation}
\end{lem}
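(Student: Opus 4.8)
The plan is to establish the two inequalities $\|T_\nu\|\leqsl\|\nu\|_\Omega$ and $\|\nu\|_\Omega\leqsl\|T_\nu\|$ separately; throughout I write $\nu$ for the given regular i-measure. The first is immediate: since every $f\in C_0(\Omega,E)$ lies in $M_{\Bb(\Omega)}(\Omega,E)$ (it is bounded, Borel, and has separable range because it vanishes at infinity), the estimate recorded in \DEF{VINT} gives $\|T_\nu f\|=\|\int_\Omega f\dint{\nu}\|\leqsl\|f\|\cdot\|\nu\|_\Omega$, whence $\|T_\nu\|\leqsl\|\nu\|_\Omega$. All the substance is in the reverse inequality, which by \eqref{eqn:semi} amounts to approximating, for any finite system of pairwise disjoint $A_1,\ldots,A_N\in\Bb(\Omega)$ and vectors $x_1,\ldots,x_N\in E$ with $\|x_n\|\leqsl1$, the sum $\sum_{n=1}^N\nu(A_n)x_n$ by $T_\nu f$ for a suitable $f\in C_0(\Omega,E)$ with $\|f\|\leqsl1$.

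For the construction I would first invoke \COR{reg} (equivalently \PRO{extend}) to fix a regular nonnegative measure $\lambda$ on $\Bb(\Omega)$ with $\nu\ll\lambda$, so that condition (ac) of \COR{abs} is at hand. Fix $\epsi>0$ and let $\delta=\delta(\epsi/N)$ be as in (ac). Using inner regularity of $\lambda$ I would pick compact $K_n\subset A_n$ with $\lambda(A_n\setminus K_n)$ small; since the $K_n$ are pairwise disjoint compacta in a locally compact Hausdorff space, they can be enclosed in pairwise disjoint, relatively compact open sets, and — after shrinking by outer regularity of $\lambda$ — these may be taken to be sets $U_n\supset K_n$ with $\lambda(U_n\setminus K_n)$ small as well. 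Urysohn's lemma then supplies continuous $\phi_n\dd\Omega\to[0,1]$ with compact support contained in $U_n$ and $\phi_n\equiv1$ on $K_n$. Because the $U_n$ are disjoint, at most one $\phi_n$ is nonzero at any point, so $f\df\sum_{n=1}^N\phi_n(\cdot)x_n$ satisfies $\|f(\omega)\|\leqsl1$ everywhere, giving $f\in C_0(\Omega,E)$ with $\|f\|\leqsl1$.

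It then remains to estimate $\|T_\nu f-\sum_{n=1}^N\nu(A_n)x_n\|$, which by linearity of the integral equals $\|\sum_{n=1}^N\int_\Omega(\phi_n-j_{A_n})(\cdot)x_n\dint{\nu}\|$. Put $E_n\df(A_n\setminus K_n)\cup(U_n\setminus A_n)$. One checks that $\phi_n-j_{A_n}$ vanishes off $E_n$ (it is $0$ on $K_n$ and off $U_n\cup A_n$) and is bounded by $1$ in modulus; hence by the localized bound $\|\int_\Omega j_B g\dint{\nu}\|\leqsl\|g\|\cdot\|\nu\|_B$ (the same estimate used in the proof of \THM{bnc}) each summand has norm at most $\|\nu\|_{E_n}$. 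Since $\lambda(E_n)\leqsl\lambda(A_n\setminus K_n)+\lambda(U_n\setminus K_n)$ can be kept below $\delta$, condition (ac) forces $\|\nu\|_{E_n}\leqsl\epsi/N$, so the whole difference is at most $\epsi$. Therefore $\|\sum_{n=1}^N\nu(A_n)x_n\|\leqsl\|T_\nu f\|+\epsi\leqsl\|T_\nu\|+\epsi$; letting $\epsi\to0$ and taking the supremum over all admissible systems yields $\|\nu\|_\Omega\leqsl\|T_\nu\|$, completing the proof.

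The main obstacle I anticipate is the topological–measure-theoretic step of the second paragraph: simultaneously separating the pairwise disjoint compacta $K_n$ by pairwise disjoint, relatively compact open sets while keeping each $\lambda(U_n\setminus K_n)$ small, and confirming that the resulting $E_n$ is precisely a set whose semivariation is controlled through (ac). Everything else — the reduction via \eqref{eqn:semi}, the norm bound $\|f\|\leqsl1$ coming from the disjointness of supports, the localized integral estimate, and the final supremum passage — is routine.
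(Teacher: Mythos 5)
Your argument is correct, but it takes a genuinely different route from the paper's. The paper works directly with the regularity of the i\mbox{-}measure $\nu$ itself: it first replaces each $A_k$ by compact subsets $K_n^{(k)}$ with $\nu(K_n^{(k)})\to\nu(A_k)$, then by compact $\ggG_\delta$-sets squeezed between $A_k$ and shrinking open supersets on whose residual intersection $\nu$ vanishes, and finally takes Urysohn-type functions $u_n^{(k)}$ converging \emph{pointwise} to $j_{A_k}$ and invokes the Bounded Norm Convergence Theorem (\THM{bnc}) to pass to the limit $\lim_n\|T_\nu f_n\|=\|\sum_k\nu(A_k)x_k\|$. You instead route everything through a regular control measure $\lambda$ with $\nu\ll\lambda$ and the absolute-continuity condition (ac), which lets you do a quantitative one-shot approximation with a single Urysohn function per set and avoid \THM{bnc} altogether; the localized estimate $\|\int j_B g\dint{\nu}\|\leqsl\|g\|\cdot\|\nu\|_B$ you rely on is exactly the one used inside the proof of \THM{bnc}, so it is available. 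The only point you should make explicit is the existence of a \emph{regular} $\lambda$ on $\Bb(\Omega)$ for a locally compact $\Omega$: \COR{reg} and \PRO{extend} are stated for compact $X$, so you need to pass to the one-point compactification (extend $\nu$ by $\nu'(A)\df\nu(A\cap\Omega)$, apply \LEM{one-point} and \COR{reg} there, and restrict back) --- a routine step, and one the paper itself takes for granted in the proof of \THM{closure}, but worth a sentence. The trade-off is that the paper's proof uses only the definition of regularity of $\nu$ plus a convergence theorem already established, while yours is more elementary at the final step (no limit theorem needed) at the cost of importing the control-measure machinery.
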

\begin{proof}
It is clear that $\|T_{\nu}\| \leqsl \|\nu\|_{\Omega}$. To show the reverse inequality, take
a finite collection of $N$ pairwise disjoint sets $A_k \in \Bb(\Omega)$ and a corresponding system
of $N$ vectors $x_k \in E$ whose norms do not exceed $1$. We only need to check that $\|\sum_{k=1}^N
\nu(A_k) x_k\| \leqsl \|T_{\nu}\|$. It follows from the definition of a regular i-measure that for
each $k$ there exists a sequence of compact subsets $K_n^{(k)}$ of $A_k$ such that
$\lim_{n\to\infty} \|\nu(K_n^{(k)}) - \nu(A_k)\| = 0$. Then, when $n$ is fixed, the sets $K_n^{(k)}$
are pairwise disjoint; and $\lim_{n\to\infty} \|\sum_{k=1}^N \nu(K_n^{(k)}) x_k\| = \|\sum_{k=1}^N
\nu(A_k) x_k\|$. This argument allows us to assume the sets $A_k$ are compact. Further, we conclude
(again) from the regularity of $\nu$ that for each $k$ there is a decreasing sequence of open
supersets $U_n^{(k)}$ of $A_k$ such that $\nu$ vanishes on every Borel subset
of $\bigcap_{n=1}^{\infty} U_n^{(k)} \setminus A_k$. We may also assume that, in addition, the sets
$U_1^{(k)}$ are pairwise disjoint. Now, using e.g. Urysohn's lemma, (for each $k$) we may find
a decreasing sequence of compact $\ggG_{\delta}$-sets $F_n^{(k)}$ with $A_k \subset F_n^{(k)}
\subset U_n^{(k)}$. Then, for each fixed $n$, the sets $F_n^{(k)}$ are pairwise disjoint;
$\lim_{n\to\infty} \|\nu(F_n^{(k)}) - \nu(\bigcap_{n=1}^{\infty} F_n^{(k)})\| = 0$ and
$\nu(\bigcap_{n=1}^{\infty} F_n^{(k)}) = \nu(A_k)$ (because $\bigcap_{n=1}^{\infty} F_n^{(k)}
\setminus A_k \subset \bigcap_{n=1}^{\infty} U_n^{(k)} \setminus A_k$). Hence, arguing as before,
we may and do assume the sets $A_k$ are (compact and) $\ggG_{\delta}$. Take pairwise disjoint open
sets $V_k$ such that $A_k \subset V_k$. Since $A_k$ is $\ggG_{\delta}$ and compact, there exists
a sequence of continuous functions $u_n^{(k)}\dd \Omega \to [0,1]$ which converge pointwise
(as $n \to \infty$) to the characteristic function $j_{A_k}$ of $A_k$ and vanish off $V_k$. Put
\begin{equation*}
f_n \df \sum_{k=1}^N u_n^{(k)}(\cdot) x_k
\end{equation*}
and observe that $f_n \in C_0(\Omega,E)$, $\|f_n\| \leqsl 1$ (since $\|x_k\| \leqsl 1$ for all $k$
and the sets $V_k$ are pairwise disjoint) and the functions $f_n$ converge pointwise (in the norm
topology of $E$) to $\sum_{k=1}^N j_{A_k}(\cdot) x_k$. So, $\|T_{\nu} f_n\| \leqsl \|T_{\nu}\|$ for
each $n$; and an application of \THM{bnc} gives $\|\sum_{k=1}^N \nu(A_k) x_k\| = \|\int_{\Omega}
\sum_{k=1}^N j_{A_k}(\omega) x_k \dint{\mu(\omega)}\| = \lim_{n\to\infty} \|T_{\nu} f_n\| \leqsl
\|T_{\nu}\|$.
\end{proof}

\begin{thm}{vsc}
Let $F$ be a vsc Banach space that contains no isomorphic copy of $\ell_{\infty}^{\RRR}$ and
$\Omega$ be a locally compact Hausdorff space. For every continuous linear operator $T\dd
C_0(\Omega,E) \to F$ there exists a unique regular Borel i-measure $\mu\dd \Bb(\Omega) \to
\llL(E,F)$ such that
\begin{equation*}
T f = \int_{\Omega} f \dint{\mu} \qquad (f \in C_0(\Omega,E)).
\end{equation*}
Moreover, $\|T\| = \|\mu\|_{\Omega}$.
\end{thm}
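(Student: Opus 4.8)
The plan is to transport the problem to the one-point compactification $Y = \Omega \sqcup \{\infty\}$ introduced at the beginning of this section, where the compact-space machinery of \PRO{VSC} and \PRO{extend} is available, and then to restrict the resulting regular Borel i-measure back to $\Omega$. First I would identify $C_0(\Omega,E)$ with the subspace $V \df \{g \in C(Y,E)\dd\ g(\infty) = 0\}$ of $C(Y,E)$, each $f$ corresponding to its extension $\hat{f}$ by $0$ at $\infty$. Since $g \mapsto g(\infty)$ is continuous, the map $P\dd C(Y,E) \to V$, $P g \df g - g(\infty)$ (where $g(\infty)$ denotes the constant function with that value), is a bounded projection, and $\tilde{T} \df T \circ P$ (with $V$ identified with $C_0(\Omega,E)$) is a continuous linear extension of $T$ to all of $C(Y,E)$.

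Applying \PRO{VSC} to $\tilde{T}$ produces a unique i-measure $\tilde{\mu}\dd \Mm(Y) \to \llL(E,F)$ with $\tilde{T} g = \int_Y g \dint{\tilde{\mu}}$ for $g \in C(Y,E)$, and \PRO{extend} upgrades $\tilde{\mu}$ to a regular Borel i-measure $\bar{\mu}\dd \Bb(Y) \to \llL(E,F)$ that agrees with $\tilde{\mu}$ on $\Mm(Y)$; in particular $\int_Y g \dint{\bar{\mu}} = \int_Y g \dint{\tilde{\mu}}$ for every continuous $g$, since such $g$ is $\Mm(Y)$-measurable and the integral depends only on the values of the measure on $\Mm(Y)$. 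I would then set $\mu \df \bar{\mu}\bigr|_{\Bb(\Omega)}$, using that $\Omega$ is open in $Y$ so that $\Bb(\Omega) = \{A \in \Bb(Y)\dd\ A \subset \Omega\}$; this $\mu$ is again an i-measure, and it is regular by \LEM{one-point}.

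To verify the representation, note that for $f \in C_0(\Omega,E)$ the extension $\hat{f}$ vanishes at $\infty$, so the singleton $\{\infty\}$ contributes nothing and $\int_Y \hat{f} \dint{\bar{\mu}} = \int_\Omega f \dint{\mu}$; combined with $T f = \tilde{T}\hat{f} = \int_Y \hat{f} \dint{\tilde{\mu}} = \int_Y \hat{f} \dint{\bar{\mu}}$ this yields $T f = \int_\Omega f \dint{\mu}$ for all $f \in C_0(\Omega,E)$. The norm equality $\|T\| = \|\mu\|_\Omega$ is then immediate from \LEM{norm}, since $T$ is exactly the operator $T_\mu$ in the notation of \eqref{eqn:Tnu}.

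For uniqueness, suppose $\mu'$ is another regular Borel i-measure on $\Bb(\Omega)$ representing $T$. Fixing $e \in E$ and $\psi \in F^*$ and testing the representation against the functions $f(\cdot)e \in C_0(\Omega,E)$ for $f \in C_0(\Omega,\KKK)$, I would apply $\psi$ to reduce to the scalar identity $\int_\Omega f \dint{\mu_{e,\psi}} = \int_\Omega f \dint{\mu'_{e,\psi}}$ for all such $f$, where $\mu_{e,\psi}(A) \df \psi(\mu(A)e)$ and likewise for $\mu'$. These are regular scalar Borel measures (regularity being inherited from that of $\mu$ and $\mu'$ as in \COR{reg}), so the uniqueness clause of the Riesz characterisation theorem for $C_0(\Omega,\KKK)$ forces $\mu_{e,\psi} = \mu'_{e,\psi}$; since $e$ and $\psi$ are arbitrary, $\mu = \mu'$. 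The step I expect to require the most care is the bookkeeping around the point at infinity---checking that restriction to $\Omega$ preserves both the additivity and the independent-convergence defining an i-measure, and that the integral over $Y$ of a function vanishing at $\infty$ genuinely collapses to the integral over $\Omega$---but these are precisely the points handled by \LEM{one-point} and the definition of the integral in \DEF{VINT}.
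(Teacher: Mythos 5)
Your proposal is correct and follows essentially the same route as the paper's proof: extend $T$ to $C(Y,E)$ via $g \mapsto T\bigl(g\bigr|_{\Omega} - c_{g(\infty)}\bigr)$, apply \PRO{VSC} and \PRO{extend}, restrict the resulting regular i-measure to $\Bb(\Omega)$, and invoke \LEM{one-point} and \LEM{norm}. The only (harmless) deviation is in the uniqueness step, where you reduce to scalar Riesz uniqueness via the measures $\mu_{e,\psi}$ (mirroring the uniqueness argument inside \PRO{extend}), whereas the paper simply notes that $\nu \mapsto T_{\nu}$ is linear and, by \LEM{norm}, isometric, hence injective.
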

\begin{proof}
Below we shall continue to denote by $T_{\nu}$ the operator defined by \eqref{eqn:Tnu} (provided
$\nu \in \MmM_r(\Bb(\Omega),\llL(E,F))$).\par
For each $e \in F$, let $c_e\dd \Omega \to F$ stand for the constant function whose only value is
$e$. Define $S\dd C(Y,E) \to F$ by $S u \df T(u\bigr|_{\Omega} - c_{u(\infty)})$. It is clear that
$S$ is continuous and linear. So, it follows from \PRO{VSC} that there is an i-measure $\nu\dd
\Mm(Y) \to \llL(E,F)$ for which $S u = \int_Y u \dint{\nu}\ (u \in C(Y,E))$. We define $\mu\dd
\Bb(\Omega) \to \llL(E,F)$ as the restriction of $\bar{\nu}$ to $\Bb(\Omega)$. We conclude from
\LEM{one-point} that $\mu$ is regular. Since every function $g \in C_0(\Omega,E)$ extends
to a continuous function $\bar{g}$ on $Y$ which vanish at $\infty$, we see that $\int_{\Omega} g
\dint{\mu} = \int_Y \bar{g} \dint{\bar{\nu}}$. But $\int_Y \bar{g} \dint{\bar{\nu}} = \int_Y \bar{g}
\dint{\nu} = S \bar{g} = T(g)$ and hence $T = T_{\mu}$.\par
Finally, since the operator $\Phi\dd \MmM_r(\Bb(\Omega),\llL(E,F)) \ni \nu \mapsto T_{\nu} \in
\llL(C_0(\Omega,E),F)$ is linear, \LEM{norm} yields that $\Phi$ is isometric and hence one-to-one,
which finishes the proof.
\end{proof}

\begin{proof}[Proof of \THM{3}] Just notice that all wsc Banach spaces as well as all dual Banach
spaces are vsc and all wsc Banach spaces contain no isomorphic copy of $\ell_{\infty}^{\RRR}$ (since
they even contain no isomorphic copy of $c_0$), and then apply \THM{vsc} and \LEM{norm}.
\end{proof}

\begin{cor}{VSC}
Assume $F$ is a vsc Banach space that contains no isomorphic copy of $\ell_{\infty}^{\RRR}$ and
$T\dd C_0(\Omega,E) \to F$ is continuous and linear. If $f_n \in C_0(\Omega,E)$ are uniformly
bounded and converge pointwise \textup{(}to a possibly discontinuous function\textup{)} in the norm
topology of $E$, then $T f_n$ converge in the norm topology of $F$. In particular, if, in addition,
$E = \KKK$, then $T$ sends weakly fundamental sequences into norm convergent sequences.
\end{cor}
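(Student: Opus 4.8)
The plan is to represent $T$ as an integral against a regular Borel i-measure and then to quote the Bounded Norm Convergence Theorem. First I would apply \THM{vsc} to $T$, obtaining the unique regular Borel i-measure $\mu\dd \Bb(\Omega) \to \llL(E,F)$ with $T g = \int_{\Omega} g \dint{\mu}$ for every $g \in C_0(\Omega,E)$. In particular $T f_n = \int_{\Omega} f_n \dint{\mu}$ for each $n$, so it suffices to control the behaviour of these integrals as $n \to \infty$.

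The only genuinely technical step is to check that the functions to be integrated are admissible, i.e.\ that each $f_n$ lies in $M_{\Bb(\Omega)}(\Omega,E)$ (see \DEF{measurable}). Boundedness is part of the hypothesis and Borel, hence weak, measurability is immediate from the continuity of $f_n$; the point requiring an argument is the separability of the image. Here I would use that $f_n$ vanishes at infinity: for each $k$ the set $\{\omega\dd \|f_n(\omega)\| \geqsl 1/k\}$ is compact, so its image under $f_n$ is compact and therefore separable, whence $f_n(\Omega) \subset \{0\} \cup \bigcup_{k=1}^{\infty} f_n(\{\|f_n\| \geqsl 1/k\})$ is a countable union of separable sets and so separable. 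The pointwise norm limit $f$ is then automatically admissible: it is bounded by the common bound of the $f_n$, it is weakly measurable as a pointwise limit of measurable functions, and $f(\Omega)$ is contained in the closure of the separable set $\bigcup_n f_n(\Omega)$.

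With admissibility in hand I would apply \THM{bnc} with $Z = \Omega$ and $\Mm = \Bb(\Omega)$: since the $f_n$ are uniformly bounded and converge to $f$ pointwise in the norm topology of $E$, the integrals $\int_{\Omega} f_n \dint{\mu}$ converge in the norm of $F$ to $\int_{\Omega} f \dint{\mu}$. As $T f_n = \int_{\Omega} f_n \dint{\mu}$, this yields norm convergence of $T f_n$, which is the first assertion. I expect this invocation of \THM{bnc} to be routine once admissibility is settled, so the separable-image verification of the previous paragraph is really the only place where care is needed.

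For the final clause I would take $E = \KKK$ and a weakly fundamental (i.e.\ weakly Cauchy) sequence $f_n$ in $C_0(\Omega,\KKK)$. Being weakly Cauchy, the sequence is weakly bounded and hence, by the Uniform Boundedness Principle, uniformly bounded; moreover, evaluating at the functionals $\delta_{\omega}\dd g \mapsto g(\omega)$, which lie in $C_0(\Omega,\KKK)^*$ for every $\omega \in \Omega$, shows that $f_n(\omega)$ converges for each $\omega$, i.e.\ $f_n$ converges pointwise. Since convergence in $\KKK$ is norm convergence, the hypotheses of the first part are fulfilled and $T f_n$ converges in norm, completing the argument.
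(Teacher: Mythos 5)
Your proposal is correct and follows essentially the same route as the paper: apply \THM{vsc} to represent $T$ by a regular Borel i-measure and then invoke \THM{bnc}, with the final clause reduced to the standard characterisation of weakly Cauchy sequences in $C_0(\Omega,\KKK)$ as the uniformly bounded pointwise convergent ones. The only difference is that you spell out the admissibility of the $f_n$ (separability of the image via the compact sets $\{\|f_n\|\geqsl 1/k\}$) and the UBP/point-evaluation details, which the paper leaves implicit.
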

\begin{proof}
It follows from \THM{vsc} that $T f = \int_{\Omega} f \dint{\mu}$ for some regular Borel i-measure
$\mu$. So, the first assertion follows from \THM{bnc}. The additional claim follows from the first
and the characterisation of weakly fundamental sequences in $C_0(\Omega,\KKK)$ (these are precisely
those which are uniformly bounded and converge pointwise to a possibly discontinuous function).
\end{proof}

The reader interested in other results on continuous linear operators defined on the spaces
of the form $C(X,\KKK)$ (into arbitrary Banach spaces) is referred to Chapter~VI in \cite{d-u}.

\begin{exm}{more}
Taking into account all properties established above, a natural question arises whether the first
assertion of \COR{VSC} holds for more general cases, such as:
\begin{itemize}
\item $T\dd C(X,E) \to F$ where $F$ is an arbitrary Banach space that contains no isomorphic copy
 of $\ell_{\infty}^{\RRR}$;
\item $T\dd V \to F$ where $F$ is wsc and $V$ is a linear subspace of $C(X,E)$
\end{itemize}
(above $T$ is assumed to be continuous and linear). Let us briefly explain that, in general,
the answer is negative (in both the above cases). For a counterexample in the first settings, just
put $F \df C([0,1],\KKK)$ and take the identity operator on $F$. To disprove the assertion
of \COR{VSC} in the second case, take an isometric copy $V$ of $F \df L^2([0,1])$ in $C([0,1],\KKK)$
and define $T$ as a linear isometry of $V$ onto $L^2([0,1])$.
\end{exm}

\COR{VSC} enables us to give an example of classical Banach spaces which are not vsc.

\begin{cor}{notvsc}
For every infinite second countable locally compact topological space $\Omega$, the Banach space
$C_0(\Omega,\KKK)$ is not vsc. In particular, $c_0$ and $C([0,1],\KKK)$ are not vsc.
\end{cor}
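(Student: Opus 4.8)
The plan is to argue by contradiction: assume $C_0(\Omega,\KKK)$ is vsc and then violate the conclusion of \COR{VSC}. Write $F \df C_0(\Omega,\KKK)$ and take $E \df \KKK$. The first thing I would record is that $F$ contains no isomorphic copy of $\ell_{\infty}^{\RRR}$. Indeed, a second countable locally compact Hausdorff space is $\sigma$-compact and metrisable, and for such $\Omega$ the space $C_0(\Omega,\KKK)$ is separable; since $\ell_{\infty}^{\RRR}$, and hence every space isomorphic to it, is nonseparable, no subspace of the separable space $F$ can be isomorphic to it. Consequently, were $F$ vsc, \COR{VSC} would apply to the identity operator $T \df \mathrm{id}_F\dd C_0(\Omega,\KKK) \to F$ (trivially continuous and linear), and its first assertion, read with $E = \KKK$ so that pointwise convergence in the norm topology of $E$ is ordinary pointwise convergence, would force every uniformly bounded, pointwise convergent sequence in $C_0(\Omega,\KKK)$ to converge in the sup-norm of $F$.

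It therefore suffices to produce a uniformly bounded sequence $f_n \in C_0(\Omega,\KKK)$ with $\|f_n\| = 1$ that converges pointwise to $0$: such a sequence cannot converge in norm (its norm limit would have to be its pointwise limit $0$, yet $\|f_n\| = 1$), contradicting the consequence drawn above and proving $F$ is not vsc. The construction uses only that $\Omega$ is infinite. I would first extract infinitely many pairwise disjoint nonempty open subsets of $\Omega$ by a routine induction: given an infinite open set $G$ (initially $G = \Omega$), separate two of its points by disjoint open sets $A, B \subseteq G$; if one of them, say $A$, is infinite, set the other aside as one of the chosen sets and continue the induction inside $A$, and otherwise both chosen points are isolated, so set one aside as an open singleton and continue inside $G$ with that point deleted (still open and infinite). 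Using local compactness I would then shrink each chosen open set to a nonempty \emph{relatively compact} open set $W_n$, pick $x_n \in W_n$, and apply Urysohn's lemma for locally compact Hausdorff spaces to obtain a continuous $f_n\dd \Omega \to [0,1]$ with $f_n(x_n) = 1$ and compact support contained in $W_n$. Then $f_n \in C_0(\Omega,\KKK)$ with $\|f_n\| = 1$, while the pairwise disjointness of the supports ensures that each point of $\Omega$ lies in the support of at most one $f_n$, so $f_n \to 0$ pointwise.

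For the two named instances it is enough to observe that both are covered by the general statement: $c_0 = C_0(\NNN,\KKK)$ for $\NNN$ carrying the discrete topology (an infinite, second countable, locally compact Hausdorff space), and $C([0,1],\KKK) = C_0([0,1],\KKK)$ since $[0,1]$ is compact, and is infinite, second countable and locally compact.

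I expect the only point requiring any care to be the purely topological construction of the separated bump functions $f_n$ in full generality; once they are in hand, the whole argument collapses to a single application of \COR{VSC}, whose substantive analytic content (via \THM{bnc}) has already been carried out.
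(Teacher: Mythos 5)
Your proposal is correct and follows exactly the paper's route: observe that $C_0(\Omega,\KKK)$ is separable (hence contains no isomorphic copy of $\ell_{\infty}^{\RRR}$), and then note that if it were vsc, \COR{VSC} applied to the identity operator would force every uniformly bounded pointwise convergent sequence to converge in norm, which fails. The paper dismisses the final counterexample with ``which is false''; you supply the standard disjoint-bump construction explicitly, which is a welcome but not substantively different addition.
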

\begin{proof}
Since $F \df C_0(\Omega,\KKK)$ is separable, it contains no isomorphic copy
of $\ell_{\infty}^{\RRR}$. So, if $F$ was vsc, the identity operator on $F$ would satisfy
the assertion of \COR{VSC}, which is false.
\end{proof}

We now turn to regular weak* i-measures.

\begin{dfn}{w*reg}
A weak* i-measure $\mu\dd \Bb(\Omega) \to \llL(E,F^*)$ is \textit{regular} if for any $f \in F$,
the i-measure $\mu_f\dd \Bb(\Omega) \ni A \mapsto \scalar{f}{\mu(A)(\cdot)} \in \llL(E,\KKK)$ is
regular.
\end{dfn}

The reader should notice that the set of all $\llL(E,F^*)$-valued regular Borel weak* i-measures
on $\Omega$ is a vector space. We also wish to emphasize that, in general, for a weak* i-measure
$\mu$ and a Borel set $A$ they may be no $\sigma$-compact subset $K$ of $A$ such that $\mu$ vanishes
on every Borel subset of $A \setminus K$.

\begin{lem}{reg}
Every weak* i-measure $\mu\dd \Mm(X) \to \llL(E,F^*)$ extends to a unique regular weak* i-measure
$\bar{\mu}\dd \Bb(X) \to \llL(E,F^*)$. Moreover, $\|\bar{\mu}\|_X = \|\mu\|_X$.
\end{lem}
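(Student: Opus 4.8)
The plan is to reduce everything to \PRO{extend} by passing to the defining family of scalarisations. For each $f \in F$, let $\mu_f\dd \Mm(X) \to \llL(E,\KKK) = E^*$ be the i-measure $\mu_f(A) \df \scalar{f}{\mu(A)(\cdot)}$ (it is an i-measure by \DEF{w*}). First I would record the uniform estimate $\|\mu_f\|_X \leqsl \|\mu\|_X \cdot \|f\|$: for pairwise disjoint $A_1,\ldots,A_N \in \Mm(X)$ and $x_k \in E$ with $\|x_k\| \leqsl 1$ one has $\sum_{k=1}^N \mu_f(A_k) x_k = \bigl(\sum_{k=1}^N \mu(A_k) x_k\bigr)(f)$, whose modulus is at most $\bigl\|\sum_{k=1}^N \mu(A_k) x_k\bigr\| \cdot \|f\| \leqsl \|\mu\|_X \cdot \|f\|$. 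By \PRO{extend}, each $\mu_f$ extends uniquely to a regular i-measure $\overline{\mu_f}\dd \Bb(X) \to E^*$ with $\|\overline{\mu_f}\|_X = \|\mu_f\|_X$.

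Next I would glue these scalar extensions back together. Since regular i-measures form a linear subspace and the extension in \PRO{extend} is unique, the assignment $f \mapsto \overline{\mu_f}$ is linear; hence for fixed $A \in \Bb(X)$ and $e \in E$ the map $f \mapsto (\overline{\mu_f}(A))(e)$ is linear on $F$, and by $|(\overline{\mu_f}(A))(e)| \leqsl \|\overline{\mu_f}(A)\| \cdot \|e\| \leqsl \|\mu_f\|_X \cdot \|e\| \leqsl \|\mu\|_X \cdot \|f\| \cdot \|e\|$ it is bounded. I therefore define $\bar{\mu}(A) e \in F^*$ to be this functional; it is linear in $e$ (because $\overline{\mu_f}(A) \in E^*$), so $\bar{\mu}(A) \in \llL(E,F^*)$ with $\|\bar{\mu}(A)\| \leqsl \|\mu\|_X$. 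By construction $\scalar{f}{\bar{\mu}(A)(\cdot)} = \overline{\mu_f}(A)$ for every $f$, so every scalarisation of $\bar{\mu}$ is the regular i-measure $\overline{\mu_f}$; thus $\bar{\mu}$ is a regular weak* i-measure (see \DEF{w*} and \DEF{w*reg}). As $\overline{\mu_f}$ extends $\mu_f$ on $\Mm(X)$ and the elements of $F$ separate the points of $F^*$, $\bar{\mu}$ extends $\mu$.

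For the norm identity, the inclusion $\Mm(X) \subset \Bb(X)$ gives $\|\mu\|_X \leqsl \|\bar{\mu}\|_X$ at once. Conversely, for pairwise disjoint $A_1,\ldots,A_N \in \Bb(X)$ and $x_k \in E$ with $\|x_k\| \leqsl 1$ I would compute
\[
\Bigl\|\sum_{k=1}^N \bar{\mu}(A_k) x_k\Bigr\| = \sup_{\|f\| \leqsl 1} \Bigl|\sum_{k=1}^N \overline{\mu_f}(A_k)(x_k)\Bigr| \leqsl \sup_{\|f\| \leqsl 1} \|\overline{\mu_f}\|_X = \sup_{\|f\| \leqsl 1} \|\mu_f\|_X \leqsl \|\mu\|_X,
\]
where the middle equality is precisely the semivariation-preservation clause of \PRO{extend}; taking the supremum over all such systems yields $\|\bar{\mu}\|_X \leqsl \|\mu\|_X$. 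Uniqueness is then immediate: if $\bar{\mu}'$ is any regular weak* i-measure extending $\mu$, each scalarisation $\scalar{f}{\bar{\mu}'(\cdot)(\cdot)}$ is a regular i-measure extending $\mu_f$, hence equals $\overline{\mu_f}$ by the uniqueness in \PRO{extend}; letting $f$ range over $F$ forces $\bar{\mu}' = \bar{\mu}$.

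I expect no serious obstacle: the whole argument is a transfer of \PRO{extend} through the predual pairing, and the only genuinely delicate points are the well-definedness of $\bar{\mu}(A)$ as a \emph{bounded} operator into $F^*$ (controlled by the uniform bound $\|\mu_f\|_X \leqsl \|\mu\|_X \cdot \|f\|$) and the consistent dependence on $f$ (handled by the linearity and uniqueness of the scalar regularisations). The single input that must be invoked rather than reproved is that regularisation preserves the total semivariation, and this is exactly what drives the reverse inequality $\|\bar{\mu}\|_X \leqsl \|\mu\|_X$.
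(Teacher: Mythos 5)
Your proposal is correct and follows essentially the same route as the paper: scalarise via the predual pairing, regularise each $\mu_f$ by \PRO{extend}, reassemble $\bar{\mu}$ using the uniform bound $\|\mu_f\|_X \leqsl \|\mu\|_X \cdot \|f\|$ and the linearity coming from uniqueness, and derive both the norm identity and uniqueness from the corresponding clauses of \PRO{extend}. No substantive differences.
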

\begin{proof}
For each $f \in F$, let $\nu_f\dd \Bb(X) \to \llL(E,\KKK)$ be the unique regular i-measure which
extends $\mu_f\dd \Mm(X) \ni A \mapsto \scalar{f}{\mu(A)(\cdot)} \in \llL(E,\KKK)$ (see
\PRO{extend}). It follows from the uniqueness of the extension that the operator $F \ni f \mapsto
\nu_f \in \MmM_r(\Bb(X),\llL(E,\KKK))$ is linear. Moreover, $\|\nu_f(A)\| \leqsl \|\nu_f\|_X \cdot
\|f\| = \|\mu_f\|_X \cdot \|f\| \leqsl \|\mu\|_X \cdot \|f\|$. One concludes that the rule
$\scalar{f}{\bar{\mu}(A)(\cdot)} = \nu_f(A)\ (f \in F,\ A \in \Bb(X))$ correctly defines a set
function $\bar{\mu}\dd \Bb(X) \to \llL(E,F^*)$. It follows from the very definition of $\bar{\mu}$
that $\bar{\mu}$ is a regular weak* i-measure. What is more, if $A_k \in \Bb(X)$ are paiwise
disjoint and $x_k \in E$ have norms not exceeding $1$, then
\begin{align*}
\Bigl\|\sum_{k=1}^N \bar{\mu}(A_k) x_k\Bigr\| &=
\sup \Bigl\{\Bigl|\sum_{k=1}^N (\bar{\mu}(A_k) x_k)(f)\Bigr|\dd\ f \in F,\ \|f\| \leqsl 1\Bigr\}\\
&= \sup \Bigl\{\Bigl|\sum_{k=1}^N \nu_f(A_k) x_k\Bigr|\dd\ f \in F,\ \|f\| \leqsl 1\Bigr\}\\
&= \sup \{\|\nu_f\|_X\dd\ f \in F,\ \|f\| \leqsl 1\} \leqsl \|\mu\|_X
\end{align*}
and therefore $\|\bar{\mu}\|_X = \|\mu\|_X$. The uniqueness of $\bar{\mu}$ follows from
\PRO{extend}.
\end{proof}

As for i-measures, for any weak* i-measure $\mu\dd \Mm(X) \to \llL(E,F^*)$, we shall denote
by $\bar{\mu}\dd \Bb(X) \to \llL(E,F^*)$ the unique extension of $\mu$ to a regular weak* i-measure.
It is worth noting here that if $W$ is a linear subspace of $F^*$ that is sequentially closed
in the weak* topology and $\mu(\Mm(X)) \subset \llL(E,W)$, then, in general, the range
of $\bar{\mu}$ may contain operators which do not belong to $\llL(E,W)$. This is why we deal here
with dual Banach spaces instead of their weak* sequentially closed subspaces.

As for i-measures, we have

\begin{lem}{one-point*}
A weak* i-measure $\mu\dd \Bb(Y) \to \llL(E,F^*)$ is regular iff $\nu \df \mu\bigr|_{\Bb(\Omega)}$,
treated as a weak* i-measure on $\Omega$, is regular.
\end{lem}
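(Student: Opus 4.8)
The plan is to reduce everything to the already-established i-measure case, namely \LEM{one-point}, by slicing against vectors $f \in F$. Recall from \DEF{w*reg} that a weak* i-measure $\mu$ is regular precisely when, for every $f \in F$, the associated scalar-valued set function $A \mapsto \scalar{f}{\mu(A)(\cdot)}$ is a regular i-measure in the sense of \DEF{regular}. Thus the whole problem decouples over $f$, and for each fixed $f$ we are dealing with an honest $\llL(E,\KKK)$-valued i-measure, to which \LEM{one-point} directly applies.

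First I would fix $f \in F$ and introduce $\mu_f\dd \Bb(Y) \to \llL(E,\KKK)$ given by $\mu_f(A) \df \scalar{f}{\mu(A)(\cdot)}$; by \DEF{w*} this is an i-measure on $Y$. The key (and only genuinely necessary) observation is that the operation of restricting to $\Bb(\Omega)$ commutes with the passage from $\mu$ to $\mu_f$: for $A \in \Bb(\Omega)$ one has $\nu_f(A) = \scalar{f}{\nu(A)(\cdot)} = \scalar{f}{\mu(A)(\cdot)} = \mu_f(A)$, so that $\nu_f = \mu_f\bigr|_{\Bb(\Omega)}$. Since $Y$ is compact and hence locally compact, the notion of regularity in \DEF{w*reg} applies verbatim both on $Y$ and on $\Omega$, so no extra work is needed to make the statement meaningful on $Y$.

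It then remains to chain the equivalences. By \LEM{one-point} applied to the scalar i-measure $\mu_f$, the measure $\mu_f$ is regular on $Y$ if and only if $\mu_f\bigr|_{\Bb(\Omega)} = \nu_f$ is regular on $\Omega$. Combining this with \DEF{w*reg} on both sides gives: $\mu$ is regular $\iff$ $\mu_f$ is regular for every $f \in F$ $\iff$ $\nu_f$ is regular for every $f \in F$ $\iff$ $\nu$ is regular. I do not expect any serious obstacle here; the proof is pure definition-chasing, with all the substantive content (the interplay between $Y$ and $\Omega$ and the $\sigma$-compact exhaustion) already contained in \LEM{one-point}. The only points that require a line of care are the commutation $\nu_f = \mu_f\bigr|_{\Bb(\Omega)}$ recorded above and the remark that \DEF{w*reg} is legitimately applicable with $Y$ in place of $\Omega$.
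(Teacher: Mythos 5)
Your proof is correct and is exactly the reduction the paper intends: the paper's own proof consists of the single sentence that the assertion follows immediately from Lemma~\textup{\ref{lem:one-point}}, and your slicing argument via $\mu_f(A)=\scalar{f}{\mu(A)(\cdot)}$, the observation that restriction to $\Bb(\Omega)$ commutes with slicing, and the definition of regularity for weak* i-measures is precisely the definition-chasing that fills in that sentence.
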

\begin{proof}
The assertion immediately follows from \LEM{one-point}.
\end{proof}

\begin{lem}{norm*}
For every regular weak* i-measure $\mu\dd \Bb(\Omega) \to \llL(E,F^*)$, $\|T_{\mu}\| =
\|\mu\|_{\Omega}$ where
\begin{equation}\label{eqn:Tmu*}
T_{\mu}\dd C_0(\Omega,E) \ni u \mapsto \int^{w*}_{\Omega} u \dint{\mu} \in F^*.
\end{equation}
\end{lem}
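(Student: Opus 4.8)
The plan is to avoid imitating the proof of \LEM{norm} directly and instead to reduce the statement to the scalar-valued case already settled there. A direct imitation is tempting but runs into the following obstacle: the regularity of a weak* i-measure (\DEF{w*reg}) is imposed only coordinatewise, i.e. for each $f \in F$ separately, so the norm-approximations $\|\mu(K) - \mu(A)\| \to 0$ by compact sets that drive the argument of \LEM{norm} are simply unavailable for $\mu$ itself. The way around this is to slice $\mu$ by functionals: for every $f \in F$ put $\mu_f \df \scalar{f}{\mu(\cdot)(\cdot)}\dd \Bb(\Omega) \to \llL(E,\KKK)$. By the definition of a weak* i-measure each $\mu_f$ is an i-measure, and by \DEF{w*reg} each $\mu_f$ is regular; hence \LEM{norm} applies to $\mu_f$ and yields $\|T_{\mu_f}\| = \|\mu_f\|_{\Omega}$, where $T_{\mu_f}\dd C_0(\Omega,E) \ni u \mapsto \int_{\Omega} u \dint{\mu_f} \in \KKK$.

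The first key step is the identity $(T_{\mu} u)(f) = T_{\mu_f} u$, valid for all $u \in C_0(\Omega,E)$ and $f \in F$. On simple functions $u \in S_{\Bb(\Omega)}(\Omega,E)$ it is immediate from the definition of the weak* integral, since $(\int^{w*}_{\Omega} u \dint{\mu})(f) = \sum_{e} (\mu(u^{-1}(\{e\}))e)(f) = \sum_{e} \mu_f(u^{-1}(\{e\}))e = \int_{\Omega} u \dint{\mu_f}$; for general $u$ it follows by continuity in $u$ of both sides, exactly as in the derivation of \eqref{eqn:m-n} in the proof of \THM{bw*c}. (Here $C_0(\Omega,E) \subset M_{\Bb(\Omega)}(\Omega,E)$, because a function vanishing at infinity has separable range, so the weak* integral of $u$ is defined.) The second ingredient is the semivariation identity $\|\mu\|_{\Omega} = \sup\{\|\mu_f\|_{\Omega}\dd\ f \in F,\ \|f\| \leqsl 1\}$, which is precisely the computation carried out in the proof of \PRO{fin}: for any finite family of pairwise disjoint $A_1,\ldots,A_N \in \Bb(\Omega)$ and vectors $x_1,\ldots,x_N \in E$ with $\|x_n\| \leqsl 1$ one has $\|\sum_{n=1}^N \mu(A_n)x_n\| = \sup_{\|f\|\leqsl 1}|\sum_{n=1}^N \mu_f(A_n)x_n|$, and taking the supremum over all such families while interchanging the two suprema gives the claim.

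Finally I would assemble the three facts. The inequality $\|T_{\mu}\| \leqsl \|\mu\|_{\Omega}$ is immediate from $\|\int^{w*}_{\Omega} u \dint{\mu}\| \leqsl \|u\| \cdot \|\mu\|_{\Omega}$. For the reverse, the identity of the previous paragraph gives $\|T_{\mu} u\| = \sup_{\|f\|\leqsl 1}|(T_{\mu}u)(f)| = \sup_{\|f\|\leqsl 1}|T_{\mu_f}u|$ for each $u$, whence, interchanging suprema,
\[
\|T_{\mu}\| = \sup_{\|f\|\leqsl 1}\|T_{\mu_f}\| = \sup_{\|f\|\leqsl 1}\|\mu_f\|_{\Omega} = \|\mu\|_{\Omega},
\]
using \LEM{norm} in the middle equality and the semivariation identity in the last. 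The only genuine difficulty is the one flagged at the outset---the failure of norm-regularity for $\mu$---and the whole design is to circumvent it by passing to the scalar slices $\mu_f$, for which \LEM{norm} is already available; the remaining steps are routine bookkeeping with suprema.
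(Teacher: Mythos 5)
Your proposal is correct and follows essentially the same route as the paper: slice $\mu$ into the regular scalar i-measures $\mu_f$, apply \LEM{norm} to each, use the identity $(T_{\mu}u)(f)=\int_{\Omega}u\dint{\mu_f}$, and interchange suprema together with $\|\mu\|_{\Omega}=\sup_{\|f\|\leqsl 1}\|\mu_f\|_{\Omega}$. The paper's proof is a condensed version of exactly this argument.
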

\begin{proof}
As usual, for each $f \in F$, denote by $\mu_f\dd \Bb(\Omega) \to \llL(E,\KKK)$ a regular i-measure
given by $\mu_f(A) = \scalar{f}{\mu(A)(\cdot)}$. Observe that $(T_{\mu} u)(f) = \int_{\Omega} u
\dint{\mu_f}$ for any $f \in F$ and $u \in C_0(\Omega,E)$. It follows from \LEM{norm} that
$\|\scalar{f}{T_{\mu}(\cdot)}\| = \|\mu_f\|_{\Omega}$ and therefore $\|T_{\mu}\| =
\sup\{\|\mu_f\|_{\Omega}\dd\ f \in F,\ \|f\| \leqsl 1\} = \|\mu\|_{\Omega}$.
\end{proof}

\begin{thm}{W*}
For every continuous linear operator $T\dd C_0(\Omega,E) \to F^*$ there exists a unique regular
weak* i-measure $\mu\dd \Bb(\Omega) \to \llL(E,F^*)$ such that $T = T_{\mu}$ where $T_{\mu}$ is
given by \eqref{eqn:Tmu*}. Moreover, $\|T\| = \|\mu\|_{\Omega}$.
\end{thm}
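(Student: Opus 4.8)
The plan is to reduce the locally compact case to the compact weak* representation theorem \THM{w*} exactly as in the proof of \THM{vsc}, with the i-measure machinery replaced throughout by its weak* counterpart. The crucial (and simplifying) observation is that the full dual $W \df F^*$ is trivially sequentially closed in its own weak* topology, so \THM{w*} applies verbatim to operators taking values in $F^*$, once we replace the domain $C_0(\Omega,E)$ by a space of the form $C(Y,E)$ with $Y = \Omega \sqcup \{\infty\}$ the one-point compactification of $\Omega$.

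For existence, I would first pass from $C_0(\Omega,E)$ to $C(Y,E)$ by setting $S u \df T(u\bigr|_{\Omega} - c_{u(\infty)})$ for $u \in C(Y,E)$, where $c_{u(\infty)}\dd \Omega \to E$ is the constant function with value $u(\infty)$ (the notation of the proof of \THM{vsc}); since $u\bigr|_{\Omega} - c_{u(\infty)}$ vanishes at infinity, it lies in $C_0(\Omega,E)$, so $S\dd C(Y,E) \to F^*$ is a well-defined continuous linear operator. Applying \THM{w*} with $X = Y$ and $W = F^*$ then yields a weak* i-measure $\nu\dd \Mm(Y) \to \llL(E,F^*)$ with $S u = \int^{w*}_Y u \dint{\nu}$ for all $u \in C(Y,E)$. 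Next I would regularise $\nu$ to a regular weak* i-measure $\bar{\nu}\dd \Bb(Y) \to \llL(E,F^*)$ by \LEM{reg} and define $\mu \df \bar{\nu}\bigr|_{\Bb(\Omega)}$; regularity of $\mu$ is then immediate from \LEM{one-point*}. To verify $T = T_{\mu}$, note that every $g \in C_0(\Omega,E)$ extends to some $\bar{g} \in C(Y,E)$ with $\bar{g}(\infty) = 0$, whence $\bar{g}\bigr|_{\Omega} - c_{\bar{g}(\infty)} = g$, so $S\bar{g} = T g$; on the other hand $\int^{w*}_Y \bar{g} \dint{\nu} = \int^{w*}_Y \bar{g} \dint{\bar{\nu}} = \int^{w*}_{\Omega} g \dint{\mu}$, the last equality holding because $\bar{g}(\infty) = 0$ annihilates the contribution of the singleton $\{\infty\}$. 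Combining these gives $T g = \int^{w*}_{\Omega} g \dint{\mu} = T_{\mu} g$.

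For uniqueness and the norm identity, I would invoke \LEM{norm*}. The assignment $\nu \mapsto T_{\nu}$ from the vector space of regular $\llL(E,F^*)$-valued weak* i-measures on $\Omega$ into $\llL(C_0(\Omega,E),F^*)$ is linear, and \LEM{norm*} shows it is isometric, hence injective. Injectivity yields the uniqueness of the representing $\mu$, while applying \LEM{norm*} directly to $\mu$ gives $\|T\| = \|T_{\mu}\| = \|\mu\|_{\Omega}$.

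Because every step is now matched to an already established result, the argument is essentially routine; the only point demanding care is the localisation identity $\int^{w*}_Y \bar{g} \dint{\bar{\nu}} = \int^{w*}_{\Omega} g \dint{\mu}$, that is, the compatibility of the weak* integral of a function vanishing at $\infty$ with the restriction of $\bar{\nu}$ from $\Bb(Y)$ to $\Bb(\Omega)$. I expect this to follow by splitting $\bar{g} = j_{\Omega}\bar{g} + j_{\{\infty\}}\bar{g}$ (both summands being $\Bb(Y)$-measurable, as $\Omega$ is open in $Y$) and observing that $j_{\{\infty\}}\bar{g} = 0$; but this is the one spot where one must confirm that the weak* integral genuinely restricts to the Borel subalgebra of $\Omega$, just as in the corresponding step of the proof of \THM{vsc}.
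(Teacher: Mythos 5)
Your proposal is correct and follows essentially the same route as the paper: pass to the one-point compactification via $S u \df T(u\bigr|_{\Omega} - c_{u(\infty)})$, apply \THM{w*} with $W = F^*$, regularise by \LEM{reg}, restrict to $\Bb(\Omega)$ using \LEM{one-point*}, and obtain uniqueness and the norm identity from the isometry of $\nu \mapsto T_{\nu}$ given by \LEM{norm*}. The localisation step you flag is handled exactly as in the proof of \THM{vsc}, so nothing further is needed.
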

\begin{proof}
Thanks to \LEM{norm*}, it suffices to show the existence of $\mu$ (see the last paragraph
in the proof of \THM{vsc}). We repeat some of arguments used in the proof of \THM{vsc}. For each
$e \in E$, let $c_e\dd \Omega \to E$ be the constant function whose only value is $e$. Define $S\dd
C(Y,E) \to F^*$ by $S u \df T(u\bigr|_{\Omega} - c_{u(\infty)})$. It follows from \THM{w*} that
there exists an i-measure $\nu\dd \Mm(X) \to \llL(E,F^*)$ such that $S u = \int^{w*}_Y u \dint{\nu}$
for all $u \in C(Y,E)$. We define $\mu$ as the restriction of $\bar{\nu}$ (see \LEM{reg})
to $\Bb(\Omega)$. We infer from \LEM{one-point*} that $\mu$ is a regular weak* i-measure. Now
it suffices to repeat the reasoning presented in \THM{vsc} in order to verify that $T = T_{\mu}$.
\end{proof}

We conclude the section with the following consequence of \THM{W*}, whose proof is left
to the reader.

\begin{cor}[General Riesz Characterisation Theorem]{Riesz}
For any continuous linear operator $T\dd C_0(\Omega,E) \to F$ there exists a unique regular weak*
i-measure $\mu\dd \Bb(\Omega) \to \llL(E,F^{**})$ such that $T f = \int^{w*}_{\Omega} f \dint{\mu}$
for any $f \in C_0(\Omega,E)$. Moreover, $\|T\| = \|\mu\|_{\Omega}$.
\end{cor}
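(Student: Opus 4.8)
The plan is to deduce this directly from \THM{W*} by passing to the bidual. Recall that $F^{**} = (F^*)^*$ is itself a dual Banach space, namely the dual of $F^*$; thus \THM{W*} applies verbatim with $F^*$ substituted for the Banach space there called $F$, so that the target $F^*$ of that theorem becomes $F^{**}$ in the present setting. The only new ingredient is the canonical isometric embedding of $F$ into its bidual.

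Concretely, I would first let $\iota\dd F \to F^{**}$ be the canonical isometric embedding and form the continuous linear operator $S \df \iota \circ T\dd C_0(\Omega,E) \to F^{**}$. Applying \THM{W*} to $S$ produces a unique regular weak* i-measure $\mu\dd \Bb(\Omega) \to \llL(E,F^{**})$ with $S f = \int^{w*}_{\Omega} f \dint{\mu}$ for all $f \in C_0(\Omega,E)$ and with $\|S\| = \|\mu\|_{\Omega}$. Since $\iota$ is isometric, $\|S\| = \|T\|$, which yields the norm equality $\|\mu\|_{\Omega} = \|T\|$. Identifying $F$ with $\iota(F) \subset F^{**}$, the displayed identity for $S$ reads $T f = \int^{w*}_{\Omega} f \dint{\mu}$, as claimed. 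For uniqueness I would note that any regular weak* i-measure representing $T$ (in the stated sense, under the embedding $F \hookrightarrow F^{**}$) also represents $S = \iota \circ T$, so the uniqueness assertion of \THM{W*} forces it to coincide with $\mu$.

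I do not anticipate any genuine difficulty here, since the entire argument is a reduction: the only verifications needed are the routine facts that the canonical embedding into the bidual is isometric (guaranteeing $\|T\| = \|S\|$) and that the weak* integral against an $F^{**}$-valued measure is formed in the weak* topology of $F^{**} = (F^*)^*$, which is precisely the hypothesis under which \THM{W*} is stated. The substantive content of \COR{Riesz} is therefore conceptual rather than technical: an operator on $C_0(\Omega,E)$ valued in a \emph{totally arbitrary} Banach space $F$ still admits an integral representation, at the cost of allowing the representing measure to take values in $\llL(E,F^{**})$ instead of $\llL(E,F)$.
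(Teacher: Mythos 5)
Your reduction is exactly the one the paper intends: the corollary is stated as a consequence of \THM{W*} (with proof left to the reader), and composing $T$ with the canonical isometric embedding $\iota\dd F \to F^{**}=(F^*)^*$, applying \THM{W*} to $\iota\circ T$, and transporting existence, the norm identity and uniqueness back through $\iota$ is precisely that argument. The proposal is correct as written.
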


\section{Closure of a convex set}

As we shall see, \THM{2} is a consequence of the next result. For the need of its formulation,
we introduce the following

\begin{dfn}{bar(M)}
Let $D$ be a Borel subset of $\Omega$. For any set $A \subset M_{\Bb(D)}(D,E)$, the space
$\bar{\mmM}(A)$ is defined as the smallest set among all $B \subset M_{\Bb(D)}(D,E)$ such that:
\begin{enumerate}[($\bar{\textup{M}}$1)]\addtocounter{enumi}{-1}
\item $A \subset B$;
\item a function $u \in M_{\Bb(D)}(D,E)$ belongs to $B$ provided the following condition
 is fulfilled:
 \begin{itemize}
 \item[(aec)] for every finite regular Borel measure $\mu$ on $D$ there exist a uniformly bounded
  sequence of functions $u_n \in B$ and a set $Z \in \Bb(D)$ with $\mu(Z) = 0$ such that the vectors
  $u_n(\omega)$ converge to $u(\omega)$ in the weak topology of $E$ for any $\omega \in D \setminus
  Z$.
 \end{itemize}
\end{enumerate}
It is an easy exercise that $\mmM(A) \subset \bar{\mmM}(A)$ for any $A \subset M_{\Bb(D)}(D,E)$, and
that $\bar{\mmM}(V)$ is a linear subspace of $M_{\Bb(D)}(D,E)$ provided $V$ is so.\par
Using \LEM{1}, one may check that $\bar{\mmM}(C(X,E)) = M_{\Bb(X)}(X,E)$ for any compact space $X$.
\end{dfn}

\begin{thm}{closure}
Let $\kkK$ be a convex subset of $C_0(\Omega,E)$ and $\bbB$ be a countable collection of pairwise
disjoint Borel subsets of $\Omega$ that cover $\Omega$. For a function $f \in C_0(\Omega,E)$ \tfcae
\begin{enumerate}[\upshape(i)]
\item $f$ belongs to the norm closure \textup{(}in $C_0(\Omega,E)$\textup{)} of $\kkK$;
\item $f\bigr|_S \in \bar{\mmM}\bigl(\kkK\bigr|_S\bigr)$ \textup{(}where $\kkK\bigr|_S \df
 \bigl\{g\bigr|_S \in C(S,E)\dd\ g \in \kkK\bigr\}$\textup{)} for every Borel set $S \subset \Omega$
 such that $S \cap B$ is $\sigma$-compact for each $B \in \bbB$;
\item there exists a real constant $R > 0$ such that $f\bigr|_L \in
 \bar{\mmM}\bigl((\kkK \cap B(R))\bigr|_L\bigr)$ \textup{(}where $B(R) \df \{g \in C_0(\Omega,E)\dd\
 \|g\| \leqsl R\}$\textup{)} for every $L \in \Bb(\Omega)$ such that the set $L \cap B$ is compact
 for each $B \in \bbB$ and nonempty only for a finite number of such $B$.
\end{enumerate}
\end{thm}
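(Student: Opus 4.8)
The plan is to prove the cycle through (i): the two implications (i)$\Rightarrow$(ii) and (i)$\Rightarrow$(iii) are immediate, and (i) is recovered from either (ii) or (iii) by a Hahn--Banach separation argument whose engine is \THM{bwc}. For the easy directions, suppose $f$ lies in the norm closure of $\kkK$ and pick $g_n\in\kkK$ with $\|g_n-f\|\to0$. For any Borel set $S$ the restrictions $g_n\bigr|_S$ are uniformly bounded, lie in $\kkK\bigr|_S$, and converge to $f\bigr|_S$ uniformly, hence pointwise in the weak topology of $E$ on all of $S$; taking the empty null set, condition (aec) of \DEF{bar(M)} holds for every finite regular Borel measure, so $f\bigr|_S\in\bar{\mmM}(\kkK\bigr|_S)$. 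This gives (i)$\Rightarrow$(ii) (the $\sigma$-compactness hypothesis is not even used). Choosing $R=\|f\|+1$ one has $g_n\in\kkK\cap B(R)$ for large $n$, and the same argument restricted to any admissible $L$ yields $f\bigr|_L\in\bar{\mmM}((\kkK\cap B(R))\bigr|_L)$, which is (i)$\Rightarrow$(iii).

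The heart of the matter is the following stability observation. Suppose $f\notin\overline{\kkK}$; since $\overline{\kkK}$ is closed and convex, Hahn--Banach separation furnishes $\phi\in C_0(\Omega,E)^*$ and $c\in\RRR$ with $\RE\phi(g)\leqsl c<\RE\phi(f)$ for all $g\in\kkK$. Applying \THM{3} with $F=\KKK$ (a dual space containing no copy of $\ell_{\infty}^{\RRR}$) represents $\phi$ as $\phi(g)=\int_{\Omega}g\dint{\mu}$ for a regular Borel i-measure $\mu\dd\Bb(\Omega)\to\llL(E,\KKK)=E^*$; by \COR{abs} together with the regularisation in \PRO{extend} and \COR{reg} I may fix a finite regular nonnegative Borel measure $\lambda$ on $\Omega$ with $\mu\ll\lambda$. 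For any $D\in\Bb(\Omega)$ and any real $c'$, I claim that $H\df\{h\in M_{\Bb(D)}(D,E)\dd\ \RE\int_D h\dint{\mu}\leqsl c'\}$ satisfies $(\bar{\textup{M}}1)$: if $u$ obeys (aec) with all $u_n\in H$, apply (aec) to the finite regular Borel measure $\lambda\bigr|_D$ to obtain uniformly bounded $u_n\in H$ and $Z\in\Bb(D)$ with $\lambda(Z)=0$ and $u_n\to u$ weakly off $Z$. Since $\mu\ll\lambda$ gives $\|\mu\|_Z=0$, replacing $u_n,u$ by $j_{D\setminus Z}u_n,j_{D\setminus Z}u$ leaves the integrals unchanged and produces everywhere weakly convergent uniformly bounded sequences, so \THM{bwc} (with target $\KKK$) yields $\int_D u_n\dint{\mu}\to\int_D u\dint{\mu}$; passing to the limit in $\RE\int_D u_n\dint{\mu}\leqsl c'$ shows $u\in H$.

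With this claim, (ii)$\Rightarrow$(i) runs as follows. Using regularity of $\lambda$ and countability of $\bbB$, choose a Borel set $S$ with $S\cap B$ $\sigma$-compact for each $B\in\bbB$ and $\lambda(\Omega\setminus S)=0$; then $\|\mu\|_{\Omega\setminus S}=0$, so $\RE\int_S g\dint{\mu}=\RE\phi(g)\leqsl c$ for $g\in\kkK$, i.e. $\kkK\bigr|_S\subseteq H$ with $c'=c$ and $D=S$. The claim gives $\bar{\mmM}(\kkK\bigr|_S)\subseteq H$, and (ii) gives $f\bigr|_S\in\bar{\mmM}(\kkK\bigr|_S)$, whence $\RE\phi(f)=\RE\int_S f\dint{\mu}\leqsl c$, a contradiction. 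For (iii)$\Rightarrow$(i) I argue similarly but localise to the sets allowed in (iii): given $\delta>0$, pick a finite subfamily $\bbB_0\subseteq\bbB$ carrying all but $\lambda$-mass less than $\delta/2$, and for each $B\in\bbB_0$ a compact $C_B\subseteq B$ with $\lambda(B\setminus C_B)$ small; then $L\df\bigcup_{B\in\bbB_0}C_B$ meets each $B$ in a compact set, is nonempty for only finitely many $B$, and satisfies $\lambda(\Omega\setminus L)<\delta$, so $\|\mu\|_{\Omega\setminus L}$ is as small as desired. For $g\in\kkK\cap B(R)$ one estimates $\RE\int_L g\dint{\mu}\leqsl\RE\phi(g)+R\|\mu\|_{\Omega\setminus L}\leqsl c+R\|\mu\|_{\Omega\setminus L}$, so $(\kkK\cap B(R))\bigr|_L\subseteq H$ with $c'=c+R\|\mu\|_{\Omega\setminus L}$; the claim and (iii) give $\RE\int_L f\dint{\mu}\leqsl c'$, and since $|\phi(f)-\int_L f\dint{\mu}|\leqsl\|f\|\,\|\mu\|_{\Omega\setminus L}$, letting $\|\mu\|_{\Omega\setminus L}\to0$ forces $\RE\phi(f)\leqsl c$, again a contradiction.

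The main obstacle is this last localisation in (iii)$\Rightarrow$(i): the separating measure $\mu$ need not be concentrated on any single admissible set $L$, so one must approximate $\Omega$ from inside by sets that are compact on each $B\in\bbB$ and meet only finitely many of them, and the resulting integration error is precisely what the uniform bound $R$ (and the restriction to $\kkK\cap B(R)$) controls. The other technical point is producing a genuinely regular finite dominating measure $\lambda$ on the locally compact $\Omega$, so that inner approximation by compacta is available; this is handled by passing through the one-point compactification and the regularisation results of this section.
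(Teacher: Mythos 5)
Your proposal is correct and follows essentially the same route as the paper: Hahn--Banach separation, the representation of the separating functional by a regular scalar-valued i-measure via \THM{3}, stability of the half-space $\{h:\ \RE\int h\dint{\mu}\leqsl c'\}$ under the (aec)-closure via \THM{bwc}, and localisation to an admissible $S$ (resp.\ $L$) through a dominating regular measure $\lambda$. The only cosmetic differences are that the paper builds $S$ directly from the regularity of $\mu$ rather than of $\lambda$, and in (iii)$\Rightarrow$(i) it fixes a single $\epsi$ and one set $L$ instead of letting $\|\mu\|_{\Omega\setminus L}\to 0$.
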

\begin{proof}
We may and do assume that $\kkK$ is nonempty. It is readily seen that both conditions (ii) and (iii)
are implied by (i). First we shall show that (i) follows from (ii). Assume $f$ satisfies (ii) and
suppose, on the contrary, that $f$ is not in the norm closure of $\kkK$. We infer from
the separation theorem that there is a continuous linear functional $\psi\dd C_0(\Omega,E) \to \KKK$
such that $\gamma \df \sup\{\RE(\psi_0(u))\dd\ u \in \kkK\} < \RE(\psi(f))$. Since $\KKK$ is wsc,
it follows from \THM{3} that $\psi$ is of the form
\begin{equation*}
\psi(g) = \int_{\Omega} g \dint{\mu} \qquad (g \in C_0(\Omega,E))
\end{equation*}
for some $\llL(E,\KKK)$-valued regular Borel i-measure $\mu$. Further, we infer from the regularity
of $\mu$ that for any $B \in \bbB$ there is a $\sigma$-compact set $S_B \subset B$ such that $\mu$
vanishes on every Borel subset of $B \setminus S_B$. We put $S \df \bigcup_{B\in\bbB} S_B$. Since
$\bbB$ is countable, we see that $S \in \Bb(\Omega)$. What is more, for each $B \in \bbB$, $S \cap B
= S_B$ (because members of $\bbB$ are pairwise disjoint) and thus $S \cap B$ is $\sigma$-compact.
For any function $u \in M_{\Bb(S)}(S,E)$ we shall denote by $u^{\#}$ the (unique) extension of $u$
to a member of $M_{\Bb(\Omega)}(\Omega,E)$ which vanishes off $S$. We shall now verify that
$f\bigr|_S \notin \bar{\mmM}\bigl(\kkK\bigr|_S\bigr)$ (which contradicts (ii)). To this end, it is
enough to show that
\begin{equation}\label{eqn:gamma}
\RE\Bigl(\int_{\Omega} u^{\#} \dint{\mu}\Bigr) \leqsl \gamma
\end{equation}
for any $u \in \bar{\mmM}\bigl(\kkK\bigr|_S\bigr)$. To do that, denote by $\hhH$ the set of all
functions $u \in M_{\Bb(S)}(S,E)$ for which \eqref{eqn:gamma} holds. Since $\mu$ vanishes on every
Borel subset of $\Omega \setminus S$, we see that $\kkK\bigr|_S \subset \hhH$. Now assume a function
$u \in M_{\Bb(S)}(S,E)$ satisfies condition (aec) (with $D \df S$ and $B \df \hhH$). Taking into
account \COR{reg}, we conclude that there are a uniformly bounded sequence of functions $u_n \in
\hhH$ and a set $Z \in \Bb(S)$ such that $\mu$ vanishes on every Borel subset of $S \setminus Z$ and
the vectors $u_n(\omega)$ converge to $u(\omega)$ in the weak topology of $E$ for any $\omega \in S
\setminus Z$. One easily infers from \THM{bwc} that then $\lim_{n\to\infty} \int_{\Omega} u_n^{\#}
\dint{\mu} = \int_{\Omega} u^{\#} \dint{\mu}$ and therefore the set $B \df \hhH$ satisfies condition
($\bar{\textup{M}}$1). Consequently, $\bar{\mmM}\bigl(\kkK\bigr|_S\bigr) \subset \hhH$ and we are
done.\par
We now turn to the proof that (i) is implied by (iii). This part is more subtle. Let $R > 0$ be
as specified in (iii). We shall show that $f$ belongs to the norm closure of $\kkK \cap B(R)$.
To this end, replacing $\kkK$ by $\kkK \cap B(R)$, we may assume that $\kkK \subset B(R)$ is such
that
\begin{itemize}
\item[(iii')] $f\bigr|_L \in \bar{\mmM}\bigl(\kkK\bigr|_L\bigr)$ for every $L \in \Bb(\Omega)$ such
 that the set $L \cap B$ is compact for each $B \in \bbB$ and nonempty only for a finite number
 of such $B$.
\end{itemize}
Enlarging, if necessary, $R$, we may and do assume that $f \in B(R)$ as well. As before, we suppose,
on the contrary, that $f$ is not in the norm closure of $\kkK$ and take an $\llL(E,\KKK)$-valued
regular Borel i-measure $\mu$ such that
\begin{equation}\label{eqn:gam}
\RE\Bigl(\int_{\Omega} u \dint{\mu}\Bigr) \leqsl \gamma
\end{equation}
for all $u \in \kkK$ and some real constant $\gamma$, while
\begin{equation}\label{eqn:f-gam}
(\epsi \df\,)\ \frac13 \Bigl(\RE\Bigl(\int_{\Omega} f \dint{\mu}\Bigr) - \gamma\Bigr) > 0.
\end{equation}
Further, let $\lambda$ be a finite nonnegative regular Borel measure on $\Omega$ for which $\mu \ll
\lambda$. Using the last property, take $\delta > 0$ such that $\|\mu\|_A \leqsl \frac{\epsi}{R}$
whenever $A \in \Bb(\Omega)$ is such that $\lambda(A) \leqsl 2 \delta$. Write $\bbB =
\{B_1,B_2,\ldots\}$ and for any $n > 0$ take a compact set $L_n \subset B_n$ for which $\lambda(B_n
\setminus L_n) \leqsl \frac{\delta}{2^n}$. Further, let $N > 0$ be such that $\sum_{n=N+1}^{\infty}
\lambda(B_n) \leqsl \delta$. We put $L \df \bigcup_{n=1}^N L_n\ (\in \Bb(\Omega))$. We see that
$L \cap B_n$ coincides with $L_n$ for $n \leqsl N$ and is empty otherwise. Our aim is to show that
$f\bigr|_L \notin \bar{\mmM}\bigl(\kkK\bigr|_L\bigr)$. Observe that $\lambda(\Omega \setminus L)
\leqsl 2 \delta$ and therefore $\|\mu\|_{\Omega \setminus L} \leqsl \epsi/R$. Consequently,
$|\int_{\Omega} j_{\Omega \setminus L} u \dint{\mu}| \leqsl \epsi$ whenever $u \in B(R)$ (where,
as usual, $j_{\Omega \setminus L}$ denotes the characteristic function of $\Omega \setminus L$). So,
we conclude from \eqref{eqn:gam} and \eqref{eqn:f-gam} that
\begin{equation}\label{eqn:gam2}
\RE\Bigl(\int_{\Omega} u^{\#} \dint{\mu}\Bigr) \leqsl \gamma + \epsi
\end{equation}
for all $u \in \kkK\bigr|_L$ and $\RE(\int_{\Omega} j_L f \dint{\mu}) > \gamma + \epsi$. Now
similarly as in the proof that (i) follows from (ii), one shows that \eqref{eqn:gam2} holds for all
$u \in \bar{\mmM}\bigl(\kkK\bigr|_L\bigr)$ and hence $f\bigr|_L \notin
\bar{\mmM}\bigl(\kkK\bigr|_L\bigr)$ (because $(f\bigr|_L)^{\#} = j_L f$).
\end{proof}

\begin{cor}{bd}
Let $\kkK$ be a convex set in $C_0(\Omega,E)$.
\begin{enumerate}[\upshape(a)]
\item If $\kkK$ is bounded, its norm closure consists precisely of those functions $f \in
 C_0(\Omega,E)$ that $f\bigr|_L \in \bar{\mmM}\bigl(\kkK\bigr|_L\bigr)$ for any compact set $L
 \subset \Omega$.
\item If $\Omega$ is compact, the norm closure of $\kkK$ coincides with $\bar{\mmM}(\kkK)$.
\end{enumerate}
\end{cor}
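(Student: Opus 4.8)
The plan is to deduce both statements from \THM{closure} applied with the trivial one-element family $\bbB \df \{\Omega\}$, for which the side conditions on the test sets simplify drastically: with this choice a Borel set $S$ satisfies ``$S \cap B$ is $\sigma$-compact for each $B \in \bbB$'' exactly when $S$ itself is $\sigma$-compact, and a Borel set $L$ satisfies ``$L \cap B$ is compact for each $B \in \bbB$ and nonempty only for a finite number of such $B$'' exactly when $L$ is compact.

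For part (a) I would first fix $R_0 > 0$ with $\kkK \subset B(R_0)$, which is possible since $\kkK$ is bounded. With $\bbB = \{\Omega\}$, condition (iii) of \THM{closure} asserts the existence of $R > 0$ such that $f\bigr|_L \in \bar{\mmM}\bigl((\kkK \cap B(R))\bigr|_L\bigr)$ for every compact $L \subset \Omega$. I would identify the set of functions satisfying this with $\{f \in C_0(\Omega,E)\dd\ f\bigr|_L \in \bar{\mmM}(\kkK\bigr|_L) \text{ for all compact } L\}$. On one hand $(\kkK \cap B(R))\bigr|_L \subset \kkK\bigr|_L$, so the monotonicity of $\bar{\mmM}$ (immediate from its definition as a smallest set) gives $\bar{\mmM}((\kkK \cap B(R))\bigr|_L) \subset \bar{\mmM}(\kkK\bigr|_L)$; on the other hand, taking $R = R_0$ gives $\kkK \cap B(R_0) = \kkK$, so the condition in (a) is itself an instance of (iii). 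Hence (iii) and the condition of (a) describe the same set of functions, and the equivalence (i)$\Leftrightarrow$(iii) of \THM{closure} settles part (a).

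For part (b), now $\Omega$ is compact, hence $\sigma$-compact. With $\bbB = \{\Omega\}$, condition (ii) of \THM{closure} reads: $f\bigr|_S \in \bar{\mmM}(\kkK\bigr|_S)$ for every $\sigma$-compact Borel set $S \subset \Omega$. Taking $S = \Omega$ shows at once that every $f$ in the norm closure lies in $\bar{\mmM}(\kkK)$, which gives one inclusion. For the reverse inclusion I would prove the following \emph{restriction principle}: if $S \subset \Omega$ is a $\sigma$-compact Borel set and $A \subset M_{\Bb(\Omega)}(\Omega,E)$, then $u\bigr|_S \in \bar{\mmM}(A\bigr|_S)$ for every $u \in \bar{\mmM}(A)$. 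Granting it, each $f \in \bar{\mmM}(\kkK)$ satisfies (ii) and therefore lies in the closure, completing part (b).

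The restriction principle is the heart of the argument and I expect it to be the main obstacle. I would establish it by showing that $B' \df \{u \in M_{\Bb(\Omega)}(\Omega,E)\dd\ u\bigr|_S \in \bar{\mmM}(A\bigr|_S)\}$ contains $A$ and is closed under condition (aec) (with $D = \Omega$), so that $\bar{\mmM}(A) \subset B'$ by minimality. Containment of $A$ is clear. For closure, suppose $u$ satisfies (aec) relative to $B'$; to place $u\bigr|_S$ in $\bar{\mmM}(A\bigr|_S)$ I verify (aec) for $u\bigr|_S$ with $D = S$. Given a finite regular Borel measure $\rho$ on $S$, the crux is to transport it to $\Omega$: the formula $\mu(C) \df \rho(C \cap S)$ defines a finite Borel measure on $\Omega$, and since compact subsets of $S$ are compact in $\Omega$, the inner regularity of $\rho$ yields that $\mu$ is inner regular by compact sets on every Borel set; in a compact Hausdorff space this forces outer regularity by complementation, so $\mu$ is a genuine regular Borel measure on $\Omega$. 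Applying (aec) for $u$ to $\mu$ produces a uniformly bounded sequence $u_n \in B'$ and a $\mu$-null Borel set $Z$ with $u_n(\omega) \to u(\omega)$ weakly for $\omega \notin Z$. Restricting to $S$, the functions $u_n\bigr|_S \in \bar{\mmM}(A\bigr|_S)$ are uniformly bounded, $Z \cap S$ is $\rho$-null, and $u_n\bigr|_S \to u\bigr|_S$ weakly off $Z \cap S$; this is precisely (aec) for $u\bigr|_S$, so $u\bigr|_S \in \bar{\mmM}(A\bigr|_S)$ and $u \in B'$. This proves the restriction principle and thereby part (b).
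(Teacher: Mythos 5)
Your proof is correct and follows the paper's route: both parts are deduced from \THM{closure} with $\bbB = \{\Omega\}$, part (a) from condition (iii) (choosing $R$ with $\kkK \subset B(R)$ and using monotonicity of $\bar{\mmM}$ for the converse containment) and part (b) from condition (ii). The one place where you genuinely go beyond the paper is the restriction principle in part (b): the paper's proof says only ``just apply point (ii)'', but condition (ii) quantifies over \emph{all} $\sigma$-compact Borel sets $S \subset \Omega$, whereas $f \in \bar{\mmM}(\kkK)$ a priori yields only the instance $S = \Omega$. Your lemma --- that $u \in \bar{\mmM}(A)$ forces $u|_S \in \bar{\mmM}(A|_S)$, proved by transporting a finite regular Borel measure $\rho$ on $S$ to $\Omega$ via $\mu(C) \df \rho(C \cap S)$ and checking inner (hence, for compact $\Omega$, outer) regularity --- closes this gap correctly; the key points (compactness is intrinsic, $\Bb(S)$ is the trace of $\Bb(\Omega)$, $\rho(Z \cap S) = \mu(Z) = 0$) all check out. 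The alternative, which is presumably what the paper has in mind, is to observe that for compact $\Omega$ and $\bbB = \{\Omega\}$ the proof of (ii) $\Rightarrow$ (i) in \THM{closure} only ever invokes (ii) for the single set $S = \Omega$ (one may take $S_{\Omega} = \Omega$ there), so the full strength of (ii) is not needed. Your version has the advantage of using \THM{closure} strictly as a black box, at the cost of an extra lemma; either way the argument is sound.
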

\begin{proof}
In both the cases put $\bbB \df \{\Omega\}$. In case (a), take $R > 0$ such that $\kkK \subset B(R)$
and apply item (iii) of \THM{closure}. In case (b) just apply point (ii) of that result.
\end{proof}

\begin{pro}{C*}
Let $\Aa$ be a $C^*$-algebra and $\aaA$ be a $*$-subalgebra of $C_0(\Omega,\Aa)$. Let $\bbB$ be
a countable collection of pairwise disjoint Borel subsets of $\Omega$ that cover $\Omega$. The norm
closure of $\aaA$ consists precisely of those functions $f \in C_0(\Omega,\Aa)$ such that
\begin{itemize}
\item[(cc)] $f\bigr|_K \in \bar{\mmM}\bigl(\aaA\bigr|_K\bigr)$ for every set $K \in \Bb(\Omega)$
 such that the set $K \cap B$ is compact for each $B \in \bbB$ and nonempty only for a finite number
 of such $B$.
\end{itemize}
In particular, if \textup{(}$f \in C_0(\Omega,\Aa)$ and\textup{)} $f\bigr|_L$ belongs
to $\bar{\mmM}\bigl(\aaA\bigr|_L\bigr)$ for any compact set $L \subset \Omega$, then $f$ is
in the uniform closure of $\aaA$.
\end{pro}
\begin{proof}
First of all, we may and do assume that $\aaA$ is closed. It is enough to check that every function
$f \in C_0(\Omega,\Aa)$ for which (cc) holds belongs to the norm closure of $\aaA$. To this end,
take $R > \|f\|$. We shall show that condition (iii) of \THM{closure} (with $\kkK = \aaA$) holds for
such $R$ (which will finish the proof). Let $L \subset \Omega$ be as specified in that condition
(or, equivalently, as specified in (cc)). It follows from (cc) that $f\bigr|_L \in
\bar{\mmM}\bigl(\aaA\bigr|_L\bigr)$. Now point (b) of \COR{bd} (applied for $\Omega \df L$ and $\kkK
\df \aaA\bigr|_L$) yields that $f\bigr|_L$ belongs to the norm closure of $\aaA\bigr|_L$. Since
the function $\aaA \ni g \mapsto g\bigr|_L \in C_0(L,E)$ is a $*$-homomorphism (with range
$\aaA\bigr|_L$) between $C^*$-algebras, it sends the open unit ball of $\aaA$ onto the open unit
ball of $\aaA\bigr|_L$. Consequently, $f\bigr|_L \in (\aaA \cap B(R))\bigr|_L$ and we are done.
\end{proof}

\begin{proof}[Proof of \THM{2}]
Each of the three cases is a special case of one of \COR{bd} and \PRO{C*}.
\end{proof}

\THM{closure} is at least surprising and seems to be a convenient tool. Recently we use its
consequence---\PRO{C*} (in its almost exact form)---to describe models for all so-called
\textit{subhomogeneous} $C^*$-algebras (which may be seen as a solution of a long-standing problem).
The paper on this is in preparation. Below we give an illustrative example of usefulness of \THM{2}.
(The result below is certainly known.)

\begin{cor}{0,1}
Let $d$ denote the natural metric on $X \df [0,1]$. The linear span $V$ of all functions
$d(x,\cdot)$ is dense in $C(X,\RRR)$.
\end{cor}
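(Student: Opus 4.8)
The plan is to read off the density of $V$ from the characterisation of uniform closures provided by \COR{bd}, thereby illustrating \THM{2} in its simplest instance. Since $V$ is a linear span it is in particular a convex subset of $C(X,\RRR)=C_0(X,\RRR)$, and $X=[0,1]$ is compact. Hence part (b) of \COR{bd} applies and tells us that the norm closure of $V$ consists exactly of those $f\in C(X,\RRR)$ with $f\in\bar{\mmM}(V)$. Because $\mmM(V)\subset\bar{\mmM}(V)$ (see \DEF{bar(M)}), it therefore suffices to prove the inclusion $C(X,\RRR)\subset\mmM(V)$; in this way the whole question collapses to a membership problem in $\mmM(V)$, where the admissible limiting operation is only uniformly bounded pointwise convergence rather than uniform convergence.

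First I would exhibit a convenient supply of elements of $V$ itself. Writing $d(0,\cdot)$ and $d(1,\cdot)$ for $y\mapsto y$ and $y\mapsto 1-y$, one has $d(0,\cdot)+d(1,\cdot)=\mathbf 1$ (the constant function $1$) and $d(0,\cdot)=\mathrm{id}$, so $V$ contains every affine function. The key identity is
\begin{equation*}
\max(y-a,0)=\tfrac12\bigl(d(a,y)+d(0,y)\bigr)-\tfrac a2\qquad(y\in[0,1]),
\end{equation*}
valid for each $a\in[0,1]$ (it is just $\max(u,0)=\tfrac12(|u|+u)$ with $u=y-a$). As a function of $y$ its right-hand side is a finite linear combination of the generators $d(x,\cdot)$ and of $\mathbf 1=d(0,\cdot)+d(1,\cdot)$, so the positive-part function $g_a\df\max(\cdot-a,0)$ lies in $V$ for every $a$.

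Next I would observe that the linear span of the affine functions together with all $g_a$ $(a\in[0,1])$ is precisely the space of continuous piecewise-linear functions on $[0,1]$: a continuous function that is affine on the intervals cut out by interior breakpoints $0<a_1<\dots<a_k<1$ equals $p(0)\,\mathbf 1+m_0\,\mathrm{id}+\sum_{j=1}^{k} c_j\,g_{a_j}$, where $m_0$ is the initial slope and $c_j$ records the jump of the slope at $a_j$. Since the continuous piecewise-linear functions are uniformly dense in $C(X,\RRR)$ (a classical fact, e.g.\ via piecewise-linear interpolation of a uniformly continuous function), every $f\in C(X,\RRR)$ is a uniform---hence, a fortiori, uniformly bounded and pointwise---limit of members of $V$. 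Condition (M1) of \DEF{M(A)} (with $E=\RRR$, whose weak topology is the usual one) then gives $f\in\mmM(V)$, so that $C(X,\RRR)\subset\mmM(V)$ and the argument is complete.

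There is essentially no hard step here: once \COR{bd} has reduced density to membership in $\mmM(V)$, the only thing to verify is the algebraic identity producing the positive-part functions, after which classical piecewise-linear approximation finishes the proof. The one point that deserves care is the reduction itself---invoking \COR{bd}(b) for the convex \emph{set} $V$ on the compact space $X$, and then using the inclusion $\mmM\subset\bar{\mmM}$ to pass from the $\bar{\mmM}$ appearing in that corollary to the more elementary $\mmM$, for which (M1) is literally the defining closure property. It is worth \emph{emphasising} that the power of \THM{2} is exactly what lets us replace the uniform closure by the much more flexible operation of bounded pointwise passage to the limit.
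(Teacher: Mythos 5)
Your proof is correct, but it takes a genuinely different---and in fact more elementary---route than the paper's. The paper's argument really exercises the $\mmM$-machinery: from the uniformly bounded difference quotients $\frac1h\bigl(d(x+h,\cdot)-d(x,\cdot)\bigr)$ it extracts, as bounded \emph{pointwise} limits, the discontinuous characteristic functions of the intervals $[0,x]$, hence of all intervals $(a,b]$, and then reaches an arbitrary continuous function as a uniform limit of step functions; the intermediate objects lie in $\mmM(V)$ but not in the uniform closure of $V$, so \THM{2} is doing genuine work there. Your identity $\max(y-a,0)=\tfrac12\bigl(d(a,y)+d(0,y)\bigr)-\tfrac{a}{2}$ instead places every positive-part function $g_a$, hence every continuous piecewise-linear function, in $V$ itself; combined with the classical uniform density of piecewise-linear functions in $C([0,1],\RRR)$ this already proves the corollary outright, with no need for \THM{2}, \COR{bd}, or the passage through $\mmM(V)$ at all---the final appeal to (M1) is correct but vacuous, since a uniform limit of elements of $V$ lies in the norm closure of $V$ by definition. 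What you lose is the illustrative purpose of the corollary: the paper presents it precisely to show how bounded pointwise limits can escape the uniform closure (characteristic functions of intervals) and how \THM{2} then converts membership in $\mmM(V)$ back into density. What you gain is a self-contained elementary proof that, incidentally, shows the hypotheses of \THM{2} are not actually needed for this particular example.
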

\begin{proof}
Thanks to \THM{2}, it suffices to show that $\mmM(V)$ contains all continuous functions, which
is quite easy: $d(0,\cdot) + d(1,\cdot) \equiv 1$ and for any $x \in X \setminus \{1\}$ and small
enough $h > 0$ the functions $\frac1h (d(x+h,\cdot) - d(x,\cdot))$ are uniformly bounded, belong
to $V$ and converge pointwise to the function given by
\begin{equation*}
t \mapsto \begin{cases}1, & t \leqsl x\\-1, & t > x.\end{cases}
\end{equation*}
We conclude that the characteristic function of $[0,x]$ is a member of $\mmM(V)$ for any $x \in X$.
So, the characteristic functions of all intervals of the form $(a,b]$ (with $0 \leqsl a < b \leqsl
1$) also belong to $V$. Noticing that every continuous function on $X$ is a uniform limit of linear
combinations of such functions, we finish the proof.
\end{proof}

\begin{rem}{specific}
The assertion of \COR{0,1} (under the notations of that result) is equivalent to the following
property:
\begin{quote}
If two complex-valued Borel measures $\mu$ and $\nu$ on $X$ satisfy
\begin{equation}\label{eqn:id}
\int_X d(x,t) \dint{\mu(t)} = \int_X d(y,t) \dint{\nu(t)} \quad \textup{for all } x \in X,
\end{equation}
then $\mu = \nu$.
\end{quote}
We leave it as an exercise that there exists a finite metric space $(X,d)$ such that \eqref{eqn:id}
holds for two \textit{different} probabilistic measures $\mu$ and $\nu$ on $X$.
\end{rem}

We conclude the paper with the following

\begin{exm}{ubd}
Taking into account \PRO{C*} and item (a) of \COR{bd}, it is natural to ask whether the assumption
in this item that $\kkK$ is bounded is essential. Below we answer this issue in the affirmative.\par
Let $\Omega = \RRR$, $E = \KKK$ and let $\kkK$ consist of all functions $u \in C_0(\RRR,\KKK)$ for
which
\begin{equation*}
\sum_{n=1}^{\infty} \frac{u(n)}{2^n} = 0.
\end{equation*}
Observe that $\kkK$ is a closed proper linear subspace of $C_0(\RRR,\KKK)$ (as the kernel
of a continuous linear functional). However, invoking Tietze's extension theorem, it is an easy
exercise to show that $\kkK\bigr|_D = C(D,\KKK)$ for any compact set $D \subset \RRR$.
\end{exm}

\end{document}